\newtheorem{theorem}{Theorem}[section]
\newtheorem{definition}{Definition}[section]
\newtheorem{lemma}{Lemma}[section]
\newtheorem{remark}{Remark}[section]
\newtheorem{proposition}{Proposition}[section]
\newtheorem{corollary}{Corollary}[section]
\numberwithin{equation}{section}
\numberwithin{figure}{section}
\newtheorem{assumption}{Assumption}[section]
\makeatletter \@addtoreset{equation}{section} \makeatother
\newcommand{\beq}{\begin{equation}}
\newcommand{\eeq}{\end{equation}}
\newcommand{\be}{\begin{equation}}
\newcommand{\ee}{\end{equation}}
\newcommand{\g}{\gamma}
\newcommand{\e}{\text{e}}
\begin{document}
\title[the Relativistic Euler equations]{Development of singularities in the  relativistic Euler equations}

\author{Nikolaos Athanasiou}
\address{Nikolaos Athanasiou:\,Department of Mathematics, Imperial College, London, SW7 2AZ, UK.}
\email{\tt n.athanasiou20@imperial.ac.uk}

\author{Tianrui Bayles-Rea}
\address{Tianrui Bayles-Rea:\, Mathematical Institute,
University of Oxford, Oxford,  OX2 6GG, UK.}
\email{\tt  Tianrui.Bayles-Rea@maths.ox.ac.uk}

\author{Shengguo Zhu}
\address{Shengguo Zhu:\, School of Mathematical Sciences, Shanghai Jiao Tong University, Shanghai, 200240, China; Mathematical Institute, University of Oxford, Oxford,  OX2 6GG, UK.}
\email{\tt zhushengguo@sjtu.edu.cn}

\keywords{Relativistic Euler equations, singularity formation, large data, far field vacuum}

\subjclass[2010]{35Q31, 35Q75, 35A09, 35B44, 35L67.}
\date{\today}


\begin{abstract}
The purpose of this paper is to  study the phenomenon of singularity formation in large data problems for $C^1$ solutions to the Cauchy problem of  the relativistic Euler equations.
The classical theory established
in  [P. D. Lax, J. Math. Phys. 5: 611–614, 1964] shows that, for  $2\times 2$ hyperbolic systems,
 the break-down of $C^1$ solutions
occurs in finite time if initial data contain any compression in some truly nonlinear characteristic
field under some additional conditions, which include   genuine nonlinearity and  the strict positivity of the difference between two corresponding eigenvalues. These harsh structural assumptions mean that it is highly non-trivial to apply this theory  to  archetypal systems of conservation laws, such as the (1+1)-dimensional relativistic Euler equations. Actually, in the (1+1)–dimensional spacetime setting, if the mass–energy density $\rho$ does not vanish initially at any finite point, the essential difficulty in considering the possible break-down  is coming up with a way to obtain sharp enough control on the lower bound of $\rho$. To this end, based on  introducing several key artificial  quantities and  some  elaborate analysis on  the difference of the two Riemann invariants,
we  characterized the decay of mass–energy density lower bound in time, and  ultimately made some concrete progress. On the one hand, for the $C^1$ solutions with large data and possible far field vacuum to the isentropic flow, we verified the theory obtained by P. D. Lax in 1964. On the other hand, for the $C^1$ solutions with large data and strictly positive initial mass–energy density to the  non-isentropic flow,  we exhibit a numerical value $N$, thought of as representing the strength of an initial compression, above which all initial data lead to a finite-time singularity formation. These singularities manifest as a blow-up in the gradient of certain Riemann invariants associated with corresponding  systems.
\end{abstract}
\maketitle

\bigskip

\tableofcontents

\section{Introduction}\label{sec:intro}

The relativistic Euler equations is  a system of great physical meaning, as they constitute a generalization of the classical,  compressible Euler equations that manages to capture and account for the effects of special relativity. In the (1+1)–dimensional spacetime setting, it can be given by 
\begin{equation}
\label{releul1D}
\begin{cases}
\displaystyle{
	\partial_t \left( \frac{c^4\rho+Pu^2}{c^2(c^2-u^2)}
	\right)
	+\partial_x\left(\frac{(c^2\rho +P)u}{c^2-u^2}\right)=0},\\[4mm]
\displaystyle{\partial_t\left(\frac{(c^2\rho +P)u}{c^2-u^2}\right)+\partial_x\left(\frac{(c^2\rho +P)u^2}{c^2-u^2} \right)
	+\partial_x
	P = 0,} \\[4mm]
\displaystyle{\partial_t S + u \partial_x S = 0},
\end{cases}
\end{equation}
where $x$ is the Eulerian spatial variable, and  $t\in\mathbb{R}^+$ is the time. $\rho \geq 0$ denotes the mass--energy density, $u$  the relativistic velocity,  $c>0$ a large  constant corresponding to  the speed of light, $P=P(\rho,S)$ the pressure of the fluid, and    $S$  the entropy function.




 (1.1)  
is one of the most important physical models for hyperbolic conservation laws
\beq\label{CL}
	\partial_t{\bf u}+	\partial_x{\bf f}({\bf u})=0\,,
\eeq
where
${\bf u}={\bf u}(t,x)\in
\mathbb{R}^n$ is the unknown vector and  ${\bf f}:\mathbb{R}^n\rightarrow\mathbb{R}^n$
is the nonlinear flux.
It is well-known that   if  initial data are smooth enough, then the  unique classical  solution to the Cauchy problem of \eqref{CL}  exists in  a  short  time (see \cite{evans}, \cite{kato1975cauchy}, \cite{ta1985boundary} etc.). A typical phenomenon observed in such systems is the development of singularities, i.e. shock wave, no matter how small and smooth the data are, which has been justified  in a series of seminal works by Lax \cite{lax1964development} in 1964 for general $2\times2$ systems, and by
John \cite{john1974formation}, Liu \cite{ liu1979development} and Li-Zhou-Kong \cites{ta1994weak, lizhoukong} for general $n\times n$ systems. These results  confirm that for the Cauchy problem of  general strictly hyperbolic systems, if the initial datum is a generic small smooth perturbation near a constant equilibrium, then the initial compression\footnote{An initial tendency of the characteristic curves propagating the solution to cross each other, defined rigorously through the negativity of  certain Riemann invariants' spatial derivatives.} in any truly nonlinear (not weakly linearly degenerate \cite{ta1994weak}) characteristic field develops singularity in finite time.   

Particularly,  in   \cite{lax1964development} one finds that,  for  the Cauchy problem  of  $2\times2$ genuinely nonlinear, strictly hyperbolic systems with large data,  if we additionally assume that  
\begin{equation}\label{assume00}
\textit{the \ difference \ between\ two \ corresponding\ eigenvalues\ is \ strictly \ positive},
\end{equation}
   then 
  \begin{equation}\label{conclusion11}
\begin{split}
&\textit{ the \ existence \  of \ compression \  in \ the \  initial \  data } \\
\Longleftrightarrow &\textit{ \ formation  \ of \ singularities \   in \ finite \  time.}
\end{split}
\end{equation} 
 Obviously, such harsh structural assumptions make it highly non-trivial to apply this theory  to the large data problems of  archetypal systems of conservation laws such as the 1-dimensional (1-D) compressible   Euler equations and  (1+1)-dimensional ((1+1)-D) relativistic Euler equations.

Actually, for the  1-D  isentropic  compressible   Euler equations with the pressure law $P(\rho)=\rho^\gamma$ ($\gamma\geq 1$ is a fixed constant), due to the specific formulas of eigenvalues, the structural assumption in  \eqref{assume00} can be replaced by 
\begin{equation}\label{assume11}
\textit{a \ sharp \ enough, \ time \ dependent \ lower \ bound \ on  \ the density \ function} \  \rho.
\end{equation}
In fact, the research on this issue has made some solid progress.
For purely rarefactive piecewise Lipschitz continuous solutions, Lin \cite{lin2} proved that the density has a $O(1+t)^{-1}$  order lower bound via a polygonal scheme. For general smooth solutions including
compressions, in  Chen-Pan-Zhu \cite{zhu2017formation}, via considering  the system in Lagrangian coordinates and making  use of  the  simple structure of the $P$-system, crucially the continuity equation
$$
(1/\rho)_t-u_x=0,
$$
they  found a lower bound $O(1+t)^{\frac{4}{\gamma-3}}$  for the density when $1<\gamma<3$, which has been  further extended in \cite{zhu2017formation} to
non-isentropic flow. Recently  in  Chen-Pan-Zhu \cite{zhuploylower}, they showed that  the lower bound $O(1+t)^{-1}$ obtained in \cite{lin2} actually holds for  any Lipschitz continuous solution. Based on these lower bound estimates on the density, in  \cite{zhu2017formation} and Chen-Chen-Zhu \cite{chen2019formation},  the theory obtained by Lax in 1964 has been verified in this important physical model: for any $C^1$ solution with large data to the Cauchy problem of the 1-D  isentropic  compressible   Euler equations, \eqref{conclusion11} holds.
In \cite{zhu2017formation}, $\rho(0,x)$ is assumed to be strictly positive, and in \cite{chen2019formation}, $\rho(0,x)$ is still required to be  positive but is allowed to vanish in the far field. For the  1-D  non-isentropic  compressible   Euler equations, the equations written in Lagrangian coordinates possess a linearly degenerate characteristic field. Then in order to  obtain similar singularity formation results, the authors in  \cites{chen2019formation, zhu2017formation} insisted that the compression along a genuinely nonlinear characteristic field should be strong enough.

It should be pointed out that, in 1985, Sideris published a remarkable paper on the development of singularities in three-dimensional compressible Euler equations \cite{Sideris}, proving that the classical solution  has to blow up in finite time (see also Rammaha \cite{Rammaha} for two-dimensional case) under some initial assumptions. His proof was based on studying certain averaged quantities formed out of the solution, showing that they satisfy some differential inequalities whose solutions have finite life span.    The idea is that by using the averaged quantities, one is able to avoid local analysis of solutions, which is essentially different from the characteristic arguments used in \cites{chen2019formation, zhu2017formation,Chen2013shock, lax1964development,john1974formation,liu1979development}.

For the (1+1)-D  relativistic Euler equations, some progress on the  singularity formation and global well-posedness of  solutions with large data have also been obtained.  Under the assumption that $P=\sigma^2 \rho$ for some positive constant $\sigma<c$, Smoller-Temple established in their seminal work \cite{TempleSmoller} the existence of a global BV weak solution to the  Cauchy problem, by crucially noticing that the shock curves for the (1+1)-D system   satisfy very strong geometric properties. Furthermore, Frid-Perepelitsa \cites{frid2} proved the global existence of a spatially periodic entropy solution, for any given periodic initial data of locally bounded total variation and strictly positive initial density, and later, the global $L^1$ stability of  such solutions have been established by Calvo-Colombo-Frid in \cite{frid1}. For a general pressure law $P(\rho)$ satisfying 
$
P'(\rho)>0$ and $ P''(\rho)>0$,
 Chen \cite{Chen} solved  the corresponding Riemann problem. Later, Hsu-Lin-Markino  \cite{Hsu} established the existence of global $L^\infty$ weak solutions with initial data containing the vacuum state.
 Ruan-Zhu \cite{RuanZhu} firstly  proved the global well-posedness of $C^1$ solutions with large data to the  Cauchy problem of the isentropic flow, if the initial data do not contain  compression (see Definition \ref{def1}).   For the multi-dimensional  case, some results on singularity formation along the spirit of Sideris in 1985 have been obtained   by  Guo-Tahvildar-Zadeh \cite{GTZ99},  Pan--Smoller \cite{PanSmoller}, Rendall \cite{Rendall} and so on. Some analysis on the blow-up mechanism can also be found in Christodoulou \cite{Christodoulou} and  Christodoulou-Miao \cite{ChristodoulouMiao}.  In a very recent  paper Athanasiou-Zhu \cite{nikoszhu}, via introducing  an elaborate argument on a particular ODE inequality and   some crucial artificial quantity,	they  can obtain a time-dependent lower bound $O(1+t)^{\frac{4}{\gamma-3}}$ of the mass-energy density for   the  (1+1)-D isentropic  relativistic Euler equations.  
  Ultimately, for $C^1$ solutions with large data and uniformly positive initial mass-energy density, in \cite{nikoszhu} one finds  that \eqref{conclusion11} still holds. Some other interesting  results can also be found in  Oliynyk \cites{Todd0,Todd1}.

 In this paper,  we will address the following two issues for   relativistic Euler equations:
\smallskip
\paragraph{{\rm (i)}. \em
Verification of \eqref{conclusion11} for  the (1+1)-D  isentropic  relativistic Euler equations with far-filed vacuum.}
 The theory obtained by Lax in 1964 will be verified  in this important physical model: for any $C^1$ solution  to the Cauchy problem of the (1+1)-D  isentropic     relativistic Euler equations with large data and possible far field vacuum, \eqref{conclusion11} holds. 
Compared with the initial assumption in  \cite{nikoszhu}, now one   can allow the  initial density profiles 
 $\rho_0 \in C^1(\mathbb{R})\cap L^1(\mathbb{R})$ such that  $\lim_{x\to \infty} \rho_0(x)=0$ and  $\lim_{x \to -\infty} \rho_0(x)=0$. Generally vacuum will appear in the far field under some physical requirements such as finite total mass and finite  total energy  in $\mathbb{R}$.

\smallskip
\paragraph{{\rm (ii)}. \em The development of singularities for   the (1+1)-D  non-isentropic  relativistic Euler equations with   strong compression.}
For any $C^1$ solutions to the Cauchy problem of the (1+1)-D  non-isentropic     relativistic Euler equations  with large data and strictly positive initial mass–energy density,  we exhibit a numerical value $N$, thought of as representing the strength of an initial compression, above which all initial data lead to a finite-time singularity formation.   Compared to the isentropic case, a further difficulty is encountered:  one needs to  establish the upper bounds of   the Riemann variables' $L^{\infty}$ norms in the slab of existence. It is no longer as straightforward as 
that of the isentropic case, since the constructed Riemann variables satisfy some inhomogeneous ODEs  along characteristics. 
To rectify this, an elaborate ODE argument was introduced in  our paper, which, together with the Gronwall inequality, yields the   the desired boundedness.

\smallskip
This paper is divided into four sections and two appendices.
In Section 2, we introduce some basic notations and state the main results.
	In Section 3, we verify \eqref{conclusion11} for  the (1+1)-D  isentropic  relativistic Euler equations with large data and possible  far-filed vacuum.     	In Section 4, we prove the   development of singularities for   the (1+1)-D  non-isentropic  relativistic Euler equations with large data, strictly positive initial mass–energy density and   strong compression. Finally, some properties on  the equation of state for the polytropic gas and continuous functions, and the derivation for two important quantities $h$ and $g$ defined in \eqref{tidleAlphaBeta}, are given in Appendices A, B and C respectively.

\section{Main results}
In this section, we first  introduce some basic notations and then state the main results.
\subsection{Isentropic flow with far-field vacuum}
We  consider  the Cauchy problem of the  (1+1)-D isentropic relativistic Euler equations, which, in  Eulerian coordinates, is  given by 
	\begin{equation}\label{RE}
	\begin{cases}
	\displaystyle
	\partial_t	\left( \frac{c^4 \rho + P u^2}{c^2(c^2-u^2)} \right)  + 	\partial_x\left( \frac{(c^2 \rho+ P)u}{c^2 - u^2} \right) = 0, \\[4mm]
	\displaystyle
		\partial_t\left( \frac{(c^2 \rho+ P)u}{c^2 - u^2} \right) +  	\partial_x\left( \frac{(c^2 \rho+ P) u^2}{c^2 - u^2} \right) +	\partial_x P=0,\\[4mm]
	(\rho,u)|_{t=0}=(\rho_0, u_0)(x) \quad \text{for} \quad x\in \mathbb{R}.
	\end{cases}
	\end{equation}	
Here,  we consider the pressure law  
	\be \label{P} P(\rho) = k^2 \rho^\gamma, \ee   for some  fixed  constants  $k>0$ and  $\gamma > 1$.
	
\vspace{3mm}

 For simplicity, we first define the $C^1$ solutions as follows:
	\begin{definition}\label{ 2.1-1} Let $T>0$ be some  time. The pair  $( \rho(t,x),u(t,x))$ is called a  $C^1$  solution to the  equations $\eqref{RE}_1$-$\eqref{RE}_2$  on $(0, T ) \times \mathbb{R}$  if
		\begin{equation*}
		\begin{split}
		\rho>0, \quad \rho \in C^1([0, T )\times \mathbb{R}),\quad   u  \in C^1([0, T )\times \mathbb{R}),  
		\end{split}
		\end{equation*}
		and the equations $\eqref{RE}_1$-$\eqref{RE}_2$ are satisfied in the pointwise sense on $(0,T) \times \mathbb{R}$. It is called a $C^1$ solution to the Cauchy problem \eqref{RE}  if it is a $C^1$ solution to the equations $\eqref{RE}_1$-$\eqref{RE}_2$ on $(0, T ) \times \mathbb{R} $ and admits the initial data  $\eqref{RE}_3$ continuously. \end{definition}

\subsubsection{Basic setup}	\label{2.1.1}

  We proceed with a rudimentary analysis of the Cauchy problem \eqref{RE}. First, the two  eigenvalues $\lambda_1$ and $\lambda_2$ of equations   $\eqref{RE}_1$-$\eqref{RE}_2$  can be given by
\begin{equation}
    \label{lambda12}
    \lambda_1 = \frac{u- \sqrt{P'}}{1- \frac{u\sqrt{P'}}{c^2}}, \quad  \lambda_2 = \frac{u+ \sqrt{P'}}{1+ \frac{u\sqrt{P'}}{c^2}}. 	
\end{equation}
	We denote the directional derivatives as
		\begin{equation*} \partial_-= \partial_t + \lambda_1 \partial_x, \quad \partial_+ = \partial_t +\lambda_2 \partial_x, 	\end{equation*}
	along two characteristic directions
	\begin{equation*}
	\frac{dx^1}{dt}=\lambda_1, \quad  \frac{dx^2}{dt}=\lambda_2,
		\end{equation*}
	respectively. Introduce the corresponding Riemann invariants
	\begin{equation}\label{wz}
	z = \frac{c}{2}\hspace{.5mm} \text{ln}\Big(\frac{c+u}{c-u}\Big) + \int_0^\rho \frac{\sqrt{P'(\sigma)}}{\sigma + \frac{P(\sigma)}{c^2}} \text{d}\sigma, \quad 
	w=  \frac{c}{2}\hspace{.5mm} \text{ln}\Big(\frac{c+u}{c-u}\Big) - \int_0^\rho \frac{\sqrt{P'(\sigma)}}{\sigma + \frac{P(\sigma)}{c^2}} \text{d}\sigma.
		\end{equation}
	Via  direct calculations, one can verify that $w$ and $z$ satisfy
	\be  \label{REODE}    \partial_+z = 0 \quad \text{and} \quad \partial_-w = 0. \ee
	
	\par   Let $h_1$ and $h_2$ be functions satisfying 
	
	\be    \label{h1h2def}  \partial_z h_{1} = \frac{\partial_z\lambda_{1}}{\lambda_1 - \lambda_2}, \quad  \partial_w h_{2} = \frac{\partial_w\lambda_{2}}{\lambda_2 - \lambda_1}. \ee  Define $\alpha = \partial_x z$, $\beta = \partial_x w$ and introduce
	
	\be \label{zwode} \xi= \e^{h_1}\alpha, \quad  \zeta= \e^{h_2} \beta. \ee

\par   	Finally, define the compression (C) and rarefaction (R) characters.
	\begin{definition}
		\label{def1}
		The local {$R/C$} character for a classical solution of \eqref{RE} is
		\[\begin{array}{lllll} 
		\text{Forward}& \ \  $R$\ \  \text{iff} \ \ \partial_x z>0;\quad \text{Forward} \ \ \ $C$ \ \ \text{iff} \ \ \partial_x z<0;\\
		\text{Backward}& \ \ $R$ \ \ \text{iff}\ \ \partial_x w>0;\quad \text{Backward} \ \  $C$ \ \ \text{iff} \ \ \partial_x w<0.
		\end{array}\]
	\end{definition}

\subsubsection{Main result}
For simplicity, we  give the following  assumption on the initial data.

\begin{assumption}\label{cond} 
Denote $	(w_0,  z_0)(x)=(w(0,x), z(0,x))$.  Assume  $(w_0, \ z_0) \in C^1(\mathbb{R})$, and 
		\begin{equation*}\begin{split}
		&z_0 - w_0>0,\quad 
			\sup_{x \in \mathbb{R}} z_0 - \inf_{x \in \mathbb{R}} w_0 < \frac{4 c \sqrt{\gamma}}{\gamma-1} \text{\text{Arctan}}\big( \frac{1}{\sqrt{\g}} \big),\quad  \|(z_0, w_0)\|_{C^1(\mathbb{R})}\le M_0,
		\end{split}
		\end{equation*}	
		for some  constant $M_0>0$.

	\end{assumption}
\par  Then the main result of this subsection can be stated as follows:
\begin{theorem} \label{isentropicGradientBlowUp}
If the  initial data $(\rho_0, u_0)$ satisfy Assumption  \ref{cond},  then the Cauchy problem \eqref{RE} has a unique global-in-time $C^1$ solution if and only if 
\[ \text{min} \begin{Bmatrix}
\inf_{x \in \mathbb{R}} \xi(0,x) , \hspace{1mm} \inf_{x \in \mathbb{R}} \zeta(0,x) 
\end{Bmatrix} \geq 0. \]
\end{theorem}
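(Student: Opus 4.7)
The argument follows the Lax-type singularity-formation blueprint, carried out in the Riemann-invariant frame of \eqref{wz}--\eqref{zwode} and adapted to accommodate far-field vacuum. Standard quasilinear hyperbolic theory yields a unique $C^1$ solution on a maximal strip $[0,T^*)\times\mathbb{R}$, with $T^*=\infty$ unless $\|(\alpha,\beta)\|_{L^\infty}$ diverges as $t\uparrow T^*$ (all other $C^1$ norms being slaved to these via the Riemann-invariant bounds). Since $\partial_+z=0$ and $\partial_-w=0$, the invariants are transported along characteristics, giving $w(t,x)\in[\inf w_0,\sup w_0]$ and $z(t,x)\in[\inf z_0,\sup z_0]$ throughout the lifespan. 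Combined with the polytropic identity
\begin{equation*}
z-w=\frac{4c\sqrt{\gamma}}{\gamma-1}\,\text{Arctan}\!\left(\frac{k\rho^{(\gamma-1)/2}}{c}\right),
\end{equation*}
the amplitude constraint in Assumption~\ref{cond} yields a uniform upper bound on $\rho$, the constraint $|u|<c$, and a uniform positive lower bound on the eigenvalue gap $\lambda_2-\lambda_1$, so that hyperbolicity is non-degenerate on $[0,T^*)\times\mathbb{R}$.

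Differentiating $\partial_+z=0$ and $\partial_-w=0$ in $x$ and using the integrating factors in \eqref{h1h2def}--\eqref{zwode}, which are engineered precisely to absorb the $\alpha\beta$ cross-terms, decouples the gradient evolution to Riccati-type equations along characteristics, schematically
\begin{equation*}
\partial_+\zeta = b(w,z)\,\zeta^2,\qquad \partial_-\xi = a(w,z)\,\xi^2,
\end{equation*}
with $a,b$ of one definite sign on the compact $(w,z)$-range by the genuine nonlinearity of both characteristic fields; say $a,b\le 0$. For the sufficiency direction ($\Leftarrow$), if $\inf\xi_0\ge 0$ and $\inf\zeta_0\ge 0$, the Riccati structure preserves non-negativity and forces $0\le\xi,\zeta\le\max\{\sup\xi_0,\sup\zeta_0\}$ uniformly in time. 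Since $e^{h_1}, e^{h_2}$ are bounded above and below on the range of $(w,z)$, the gradients $|\alpha|,|\beta|$ stay uniformly bounded, and the continuation principle gives $T^*=\infty$.

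For the necessity direction ($\Rightarrow$), I would argue by contrapositive. Assume $\zeta(0,x_0)<0$ at some $x_0\in\mathbb{R}$ (the $\xi<0$ case is symmetric), and trace the forward characteristic $x_+(t)$ emanating from $x_0$, along which $z\equiv z_0(x_0)$ is frozen. On this curve, $\zeta$ satisfies the scalar ODE $\dot\zeta=b(w(t),z_0(x_0))\,\zeta^2$; integrating $\tfrac{d}{dt}(1/\zeta)=-b(w(t),z_0(x_0))$ shows that $\zeta\to-\infty$ in finite time exactly when $\int_0^\infty|b(w(t),z_0(x_0))|\,dt=\infty$, and any such blow-up of $\zeta$ forces the blow-up of $\beta=\partial_x w$ asserted in the theorem.

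The central obstacle is therefore to secure this divergent integral in the far-field vacuum regime. Since $|b(w,z)|\to 0$ as $\rho\to 0$, and $\rho_0$ is allowed to vanish at $\pm\infty$, one cannot rely on a pointwise lower bound of the form $z_0(x_0)-\sup w_0>0$, which was the comfort of the uniformly-positive case. I would instead extend the sharp time-dependent lower bound $\rho\gtrsim(1+t)^{4/(\gamma-3)}$ from \cite{nikoszhu} to the present setting by adapting its artificial-quantity and ODE-inequality analysis, rerun along the specific characteristic $x_+(t)$ while tracking the slow erosion of $z-w$ using only the pointwise information $z_0-w_0>0$ together with the transport bounds on $w,z$. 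Translating this polynomial lower bound on $\rho$ into a polynomial lower bound on $|b|$ whose time integral along $x_+(t)$ diverges closes the Riccati blow-up argument, and hence the proof.
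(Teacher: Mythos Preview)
Your blueprint matches the paper's approach: Riccati ODEs \eqref{vbn1} for $(\xi,\zeta)$ along characteristics, sign-preservation for sufficiency, and divergence of the time-integral of the Riccati coefficient for blow-up. Your final paragraph also correctly identifies the key refinement over \cite{nikoszhu}: the lower bound must be run along the specific characteristic through the compression point, using only the pointwise information $\rho_0(x_*)>0$, not any uniform positivity.

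That said, two of your intermediate claims are false in precisely the far-field vacuum regime this theorem addresses, and you should remove them. First, there is \emph{no} uniform positive lower bound on the eigenvalue gap: from \eqref{lambda12} one computes $\lambda_2-\lambda_1\propto\sqrt{P'(\rho)}\to 0$ as $\rho\to 0$. Second, $e^{h_1},e^{h_2}$ are \emph{not} bounded above and below on the closure of the $(w,z)$-range: the explicit formula in Lemma~\ref{lemma5.5} contains the term $\frac{\gamma-3}{2\gamma-2}\ln(\sin Y)$ with $Y\propto z-w$, which diverges as $z-w\to 0$. In particular your sufficiency argument ``$e^{h_i}$ bounded $\Rightarrow |\alpha|,|\beta|$ bounded'' does not go through as stated; the degeneration of these weights near vacuum is the whole point of the problem.

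For the blow-up direction, the paper does not attempt a spatially-uniform density lower bound (impossible here). Instead it shows $e^{-h_1}\partial_w\lambda_1\approx\mathcal{Y}$ for the explicit quantity $\mathcal{Y}$ in \eqref{Yquantity} (behaving like $\rho^{(3-\gamma)/4}$ for $1<\gamma<3$, and bounded below by $1$ for $\gamma\ge 3$), and then derives the self-contained Riccati inequality $\partial_-\mathcal{Y}\ge -C_g\mathcal{Y}^2$ along the characteristic through $x_*$ (Lemmas~\ref{lemma5.7}--\ref{lemma5.8}). Integrating gives $\mathcal{Y}(t)\ge \mathcal{Y}(0,x_*)/(1+C_g\mathcal{Y}(0,x_*)t)$, whose time-integral diverges logarithmically; the constant depends on $\mathcal{Y}(0,x_*)>0$, which is guaranteed by $\rho_0(x_*)>0$. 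This is equivalent in content to your proposed bound $\rho\gtrsim(1+t)^{4/(\gamma-3)}$ along the characteristic, but working directly with the Riccati coefficient $\mathcal{Y}$ rather than with $\rho$ is what makes the ODE inequality close cleanly.
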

\begin{remark}First,   the initial assumption $\sup_{x \in \mathbb{R}} w_0 - \inf_{x \in \mathbb{R}} z_0 < \frac{4 c \sqrt{\gamma}}{\gamma-1} \text{\text{Arctan}}\big( \frac{1}{\sqrt{\g}} \big)$ is used to make sure that $|\sqrt{P'(\rho)}|<c$   holds for $C^1$ solutions of the  problem \eqref{RE} (see \eqref{Pprimeformularel}).

Second, according to the above theorem, we  see that  the  definition of compression and rarefaction characters
gives a clear description on the singularity formation.

\end{remark}

\subsection{Non-isentropic flow with strictly positive mass–energy density}
Consider the Cauchy problem of the  non-isentropic system  \eqref{releul1D} with the following initial data
\begin{equation}\label{non-initial}
(\rho,u,S)|_{t=0}=(\rho_0, u_0,S_0)(x) \quad \text{for} \quad x\in \mathbb{R}.
	\end{equation}

We first consider the constitutive relation.
Denote $e$  the specific internal energy, n the rest-mass density, and  $\theta$ is the absolute  temperature. Usually, one has 
\begin{equation}
    \label{nrho}
    \rho = n\left(1+\frac{e}{c^2}\right),
\end{equation}
which can be seen in \cites{geng2011special, geng2009, lidaqianBook,relativisticShockTube, jingchen1997}.
For ideal gas, it holds that
\begin{equation}
    \label{P1}
    P = R\theta n,
\end{equation}
where $R>0$ is the gas constant.
For the  polytropic gas, one still has 
	\begin{equation*}
    e = C_v \theta,
\end{equation*}
for the specific heat at constant volume $C_v = \frac{R}{\gamma -1}$ with $\gamma > 1$, which, along with  \eqref{P1}, yields that 
\begin{equation}
    \label{P2}
    P =R/C_v en= (\gamma-1)en.
\end{equation}

Based on the above relations and the first law of thermodynamics:
	\begin{equation*}
    \theta \text{d}S = \text{d}e + P\text{d}n^{-1},
\end{equation*}
one can obtain that 
\begin{equation}\label{formula-3}
    S-S_* =  R \log n^{-1} + \frac{R}{\gamma-1} \log(e) = \log\left(\frac{e^{C_v}}{n^{C_v(\gamma-1)}}\right),
\end{equation}
where $S^*$ is some constant. In this manuscript, without loss of generality, we assume that $S^*=0$, which, along with \eqref{formula-3}  and \eqref{P2}, yields that 
\begin{equation}
    \label{PnS}
    P = (\gamma-1) n^\gamma \exp\left(\frac{S}{C_v}\right).
\end{equation}
Moreover,  it follows  from \eqref{nrho} and \eqref{P2} that 
\begin{equation}\label{formula-5}
P=c^2(\gamma-1)(\rho-n),
\end{equation}
which, along with  \eqref{PnS}, implies that 
\begin{equation}
    \label{Prho}
    P - (\gamma-1)\left(\rho - \frac{P}{(\gamma-1)c^2}\right)^\gamma \exp\left(\frac{S}{C_v}\right) = 0.
\end{equation}

 Second,  we define the $C^1$ solutions considered in this subsection  as follows:
	\begin{definition}\label{ 2.1-non} Let $T>0$ be some  time. The triple  $( \rho(t,x),u(t,x),S(t,x))$ is called a  $C^1$  solution to the non-isentropic relativistic Euler equations on $(0, T ) \times \mathbb{R}$  if
		\begin{equation*}\begin{split}
		&\rho>0, \quad \rho \in C^1([0, T )\times \mathbb{R}),\quad  u  \in C^1([0, T )\times \mathbb{R}), \\
		  &  S \in C^1([0, T )\times \mathbb{R}), \quad \partial_{xx}S\in C([0, T )\times \mathbb{R}),
		    \end{split}
		\end{equation*}
		and the equations $\eqref{releul1D}$ are satisfied in the pointwise sense on $(0,T) \times \mathbb{R}$. It is called a $C^1$ solution to the Cauchy problem \eqref{releul1D} with \eqref{non-initial},  if it is a $C^1$ solution to the equations $\eqref{releul1D}$ on $(0, T ) \times \mathbb{R} $, and it admits the initial data  $\eqref{non-initial}$ continuously. \end{definition}

\subsubsection{Basic setup}
One can rewrite \eqref{releul1D} in the following form:
\begin{equation*}
 \partial_t v + 
    A(\rho,u,S) \partial_x v = 0,
\end{equation*}
where $v=(\rho, u,S)^\top$, and $A(\rho,u,S)$ is a $3\times 3$ matrix defined by 
	\begin{equation}\label{matrix}
    \scalebox{0.96}{$
    A(\rho,u,S) \coloneqq \begin{bmatrix} \displaystyle \frac{c^2u(c^2-\partial_\rho P)}{c^4-u^2\partial_\rho P} & 
    \displaystyle \frac{c^2(P+\rho c^2)}{c^4-u^2\partial_\rho P} & 
    \displaystyle -\frac{\partial_S P u(c^2-u^2)}{c^4-u^2\partial_\rho P} \\[3mm]
    \displaystyle \frac{c^2\partial_\rho P(c^2-u^2)^2}{(c^4-u^2\partial_\rho P)(P+\rho c^2) } & 
    \displaystyle \frac{c^2u(c^2-\partial_\rho P)}{c^4-u^2\partial_\rho P} & 
    \displaystyle \frac{c^2\partial_S P(c^2-u^2)^2}{(c^4-u^2\partial_\rho P)(P+\rho c^2)} \\[3mm] 0 & 0 & u \end{bmatrix}.
    $}
\end{equation}
It follows from direct calculations that the above system  has eigenvalues
	\begin{equation}\label{3tezheng}
\lambda_1 = u,\  \
\lambda_2 = \displaystyle \frac{c^2(u+\sqrt{\partial_\rho P})}{c^2+u\sqrt{\partial_\rho P}}, \ \
\lambda_3 = \displaystyle \frac{c^2(u-\sqrt{\partial_\rho P})}{c^2-u\sqrt{\partial_\rho P}}.
\end{equation}

Comparing with the isentropic system, one of significant differences for the non-isentropic  system is the disappearance of Riemann invariances, in fact, the Riemann variables are:
\begin{equation*}
w = \displaystyle{\frac{c}{2}\ln\left(\frac{c+u}{c-u}\right)- \int_{0}^{\rho} \frac{\sqrt{\partial_\sigma P(\sigma, S)}}{ \sigma +\frac{P(\sigma,S)}{c^2}} \text{d} \sigma},\ \  z={\frac{c}{2}\ln\left(\frac{c+u}{c-u}\right)+ \int_{0}^{\rho} \frac{\sqrt{\partial_\sigma P(\sigma, S)}}{ \sigma +\frac{P(\sigma,S)}{c^2}} \text{d} \sigma},
\end{equation*}
which vary along characteristics as shown in \eqref{riPDEs}. The functions below are all expressed in the coordinate $(w(\rho,u, S), z(\rho,u, S), \hat{S} (\rho,u, S))$, with $\hat{S} (\rho,u, S) = S$. Denote 
	\begin{equation}\label{Lambda}
    \begin{split}
    \Lambda\left(\frac{z-w}{2},\hat{S}\right) \coloneqq \partial_\rho P \left( \mathcal{J}^{-1}\left(\frac{z-w}{2},\hat{S}\right) \right),
    \end{split}
\end{equation}
where $\mathcal{J}^{-1}$ is the inverse function of $
    \mathcal{J}(\rho,S) = \left( \frac{z-w}{2}(\rho,S),  \hat{S} \right)^\top$, and 
	\begin{equation}\label{HandG}
    \begin{cases}
     \displaystyle  
 H(w,z,\hat{S}) \coloneqq \frac{w+z}{c}+ \ln\left(\frac{c-\sqrt{\Lambda}}{c+\sqrt{\Lambda}}\right)=\ln\left(\frac{c+\lambda_3}{c-\lambda_3}\right), \\[4mm]
    \displaystyle
 G(w,z,\hat{S}) \coloneqq \frac{w+z}{c}+ \ln\left(\frac{c+\sqrt{\Lambda}}{c-\sqrt{\Lambda}}\right)=\ln\left(\frac{c-\lambda_2}{c+\lambda_2}\right).
    \end{cases}
\end{equation}
Then one has that
\begin{equation}
    \label{lambdas}
    \displaystyle \lambda_2 = c\left(1-\frac{2}{e^{G}+1}\right), \quad  \lambda_3 = c\left(1-\frac{2}{e^{H}+1}\right).
\end{equation}

Next, denote
	\begin{equation}\label{tidleAlphaBeta}
    \begin{split}
    &\alpha \coloneqq \partial_x w, \quad \beta \coloneqq \partial_x z,\quad 
    \widetilde{\alpha} = \alpha - a \eta,\quad 
    \widetilde{\beta} = \beta + a \eta, \quad 
    \eta \coloneqq \partial_x S,\\
    & \widetilde{n} \coloneqq \frac{cn}{\sqrt{c^2-u^2}},\quad    a = \partial_S \left(\frac{w-z}{2} \right) + \frac{c^2}{(P+\rho c^2)}\frac{\partial_S P}{\sqrt{\partial_\rho P}},\\
  \displaystyle & h \coloneqq -\ln \left(e^H+1\right) + \frac{1}{2}\ln \left(e^{G+H}-e^{2H}\right) - \int_{\epsilon}^{\frac{z-w}{2}} \frac{(c+ \sqrt{\Lambda})^2}{2c^2 \sqrt{\Lambda}} (\Xi, \hat{S}) \ \text{d}\Xi, \\[1mm]
  \displaystyle & g \coloneqq -\ln \left(e^G+1\right) + \frac{1}{2}\ln \left(e^{2G}-e^{G+H}\right) - \int_{\epsilon}^{\frac{z-w}{2}} \frac{(c- \sqrt{\Lambda})^2}{2c^2 \sqrt{\Lambda}}  (\Xi, \hat{S}) \ \text{d}\Xi,\\
   & L(z,w,\hat{S}) \coloneqq \displaystyle -\frac{1}{\widetilde{n}^2} \int_{w}^{z} e^h \widetilde{n} \partial_\vartheta a \left(\frac{\lambda_1-\lambda_2}{\lambda_3-\lambda_2}\right) (w,\vartheta,\hat{S}) \ \text{d}\vartheta, \\[3mm]
    \displaystyle & M(z,w,\hat{S}) \coloneqq \displaystyle -\frac{1}{\widetilde{n}^2} \int_{z}^{w} e^g \widetilde{n} \partial_\varsigma a \left(\frac{\lambda_1-\lambda_3}{\lambda_3-\lambda_2}\right) (\varsigma,z,\hat{S}) \ \text{d}\varsigma,
    \end{split}
\end{equation}
where $\epsilon = \frac{1}{2} \inf_x (z_0-w_0)$ and $z_0 = z(0,x)$, $w_0= w(0,x)$. 
Now we can introduce two key quantities, which play important roles in our analysis on singularity formation:
\begin{equation}
    \label{yandq}
    \displaystyle
    r \coloneqq e^h \widetilde{\alpha} - L\eta \widetilde{n}, \quad
    \displaystyle q \coloneqq e^g \widetilde{\beta} - M\eta \widetilde{n}.
\end{equation}

Lastly, define
	\begin{equation}\label{coeffientsA}
    \scalebox{0.9}{$    
    \displaystyle
    \begin{cases}
    a_0 \coloneqq \displaystyle e^h \left[\partial_3 S (a\partial_w h + \partial_{\hat{S}} h) -\partial_z \lambda_3 \frac{a \partial_2 S}{\lambda_3-\lambda_2} - \eta (a\partial_w \lambda_3 - \partial_{\hat{S}} \lambda_3) - \eta \partial_w a (\lambda_1 - \lambda_3)\right], \\[9pt]
    a_1 \coloneqq \displaystyle e^h \eta \partial_z a \left(\frac{\lambda_1-\lambda_2}{\lambda_3 - \lambda_2}\right) \left[\frac{\partial_w\widetilde{n}}{\partial_z\widetilde{n}}\partial_3 w + \frac{\partial_{\hat{S}}\widetilde{n}}{\partial_z\widetilde{n}} \partial_3 S - a \partial_2 S - \eta a(\lambda_3-\lambda_2)\right], \\[9pt]
    a_2 \coloneqq \displaystyle \widetilde{n}\eta \left[ \frac{\partial_w\widetilde{n}}{\partial_z\widetilde{n}}\partial_3 w \partial_z L + \frac{\partial_{\hat{S}}\widetilde{n}}{\partial_z\widetilde{n}}\partial_3 S \partial_z L - \partial_3 w \partial_w L - \partial_{\hat{S}} L \partial_3 S \right] - L \widetilde{n} \left(\partial_3 \eta - \frac{\eta}{\widetilde{n}}\partial_3 \widetilde{n}\right), \\[9pt] 
    a_3 \coloneqq e^{-h} (a_0 - 2 \partial_w \lambda_3 L \widetilde{n} \eta), \\[9pt]
    a_4 \coloneqq e^{-h} L \eta \widetilde{n} a_0 + a_1 + a_2 - \partial_w \lambda_3 e^{-h} L^2 \eta^2 \widetilde{n}^2,
    \end{cases}
    $}
\end{equation}
and
	\begin{equation}\label{coeffientsB}
    \displaystyle
    \scalebox{0.9}{$
    \begin{cases}
    b_0 \coloneqq \displaystyle e^g \left[\partial_2 S (-a\partial_z g + \partial_{\hat{S}} g) -\partial_w \lambda_2 \frac{a \partial_3 S}{\lambda_3-\lambda_2} + \eta (a\partial_z \lambda_2 - \partial_{\hat{S}} \lambda_2) + \eta \partial_z a (\lambda_1 - \lambda_2)\right], \\[9pt]
    b_1 \coloneqq \displaystyle e^g \eta \partial_w a \left(\frac{\lambda_1-\lambda_3}{\lambda_3 - \lambda_2}\right) \left[\frac{\partial_z\widetilde{n}}{\partial_w\widetilde{n}}\partial_2 z + \frac{\partial_{\hat{S}}\widetilde{n}}{\partial_w\widetilde{n}}\partial_2 S + a \partial_3 S - \eta a(\lambda_3-\lambda_2)\right], \\[9pt]
    b_2 \coloneqq \displaystyle \widetilde{n}\eta \left[ \frac{\partial_z\widetilde{n}}{\partial_w\widetilde{n}}\partial_2 z \partial_w M + \frac{\partial_{\hat{S}}\widetilde{n}}{\partial_w\widetilde{n}}\partial_2 S \partial_w M - \partial_2 z \partial_z M - \partial_{\hat{S}} M \partial_2 S \right] - M \widetilde{n} \left(\partial_2 \eta - \frac{\eta}{\widetilde{n}}\partial_2 \widetilde{n}\right), \\[9pt] 
    b_3 \coloneqq e^{-g} (b_0 - 2 \partial_z \lambda_2 M \widetilde{n} \eta), \\[9pt]
    b_4 \coloneqq e^{-g} M \eta \widetilde{n} b_0 + b_1 + b_2 - \partial_z \lambda_2 e^{-g} M^2 \eta^2 \widetilde{n}^2,
    \end{cases}
    $}
\end{equation}
where
\begin{equation}
    \label{partialShortHandNotations}
    \partial_1 = \partial_t - \lambda_1 \partial_x,\quad 
    \partial_2 = \partial_t - \lambda_2 \partial_x,\quad 
    \partial_3 = \partial_t - \lambda_3 \partial_x.
\end{equation}

\subsubsection{Main result}
For simplicity, we first give the following initial   assumption.
\begin{assumption}\label{REAssumpInitialData}
Assume that $(w_0,  z_0) \in C^1(\mathbb{R})$, $S_0 \in C^2(\mathbb{R}) \cap \text{BV}$, and 
\begin{equation*}
\inf_{x\in \mathbb{R}} (z_0-w_0)>0,\quad \sup_{x\in \mathbb{R}} |u_0(x)| < c, \quad  \|(z_0, w_0)\|_{C^1(\mathbb{R})}\leq M_1,\quad  \|S_0\|_{C^2(\mathbb{R})} \leq B, 
\end{equation*}
where $M_1$ and $B$ are both   positive constants,  and $BV$ denotes the space of functions with bounded total variation.
\end{assumption}

Second, for stating  our main result precisely, we give the following several propositions.

\begin{proposition}\label{PropG} 
The following function
\begin{equation*}
    \psi (\rho) \coloneqq \int_{0}^{\rho} \inf_{S \in [-B,B]} \frac{c^2 \sqrt{\partial_\sigma P(\sigma,S)}}{c^2 \sigma + P(\sigma,S)} \text{d} \sigma
\end{equation*}
 is strictly monotonically increasing, continuous, 
$$
0\leq  \psi(\rho) \leq  \frac{z-w}{2},\quad  \text{and}\quad  \psi(\rho)= 0 \iff \rho = 0.
$$
\end{proposition}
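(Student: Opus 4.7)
The plan is to verify the three claims by first analyzing the integrand pointwise and then integrating. I would start by invoking the analysis of Appendix A to extract, from the implicit relation \eqref{Prho}, the basic qualitative behaviour of $P(\rho, S)$: for each $S \in [-B, B]$ and $\rho > 0$ one has $P(\rho, S) > 0$, $\partial_\rho P(\rho, S) > 0$, both are smooth in $(\rho, S)$, and the leading order asymptotics
\[
  P(\rho, S) \sim (\gamma-1)\, e^{S/C_v}\, \rho^\gamma, \qquad \partial_\rho P(\rho, S) \sim \gamma(\gamma-1)\, e^{S/C_v}\, \rho^{\gamma-1},
\]
hold as $\rho \to 0^+$, uniformly for $S \in [-B, B]$.

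Next I would set
\[
  f(\sigma, S) \coloneqq \frac{c^2 \sqrt{\partial_\sigma P(\sigma, S)}}{c^2 \sigma + P(\sigma, S)}, \qquad \phi(\sigma) \coloneqq \min_{S \in [-B,B]} f(\sigma, S);
\]
the infimum is attained because $f(\sigma, \cdot)$ is continuous on the compact set $[-B, B]$, and it is strictly positive for $\sigma > 0$ by the preceding step. A standard compactness argument (continuity of $f$ together with extraction of convergent subsequences from $[-B, B]$) shows that $\phi$ is continuous on $(0, \infty)$. The asymptotics just noted give $f(\sigma, S) \sim \sqrt{\gamma(\gamma-1)\, e^{S/C_v}}\, \sigma^{(\gamma-3)/2}$ as $\sigma \to 0^+$, uniformly in $S$; since $\gamma > 1$ this is Lebesgue integrable near $\sigma = 0$. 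Hence $\psi$ is well defined and absolutely continuous on $[0, \infty)$ with $\psi'(\rho) = \phi(\rho) > 0$ for $\rho > 0$. This at once yields continuity, strict monotonicity, $\psi(0) = 0$, and consequently the equivalence $\psi(\rho) = 0 \iff \rho = 0$.

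For the two sided bound, $\psi(\rho) \ge 0$ is immediate from nonnegativity of the integrand. For the upper bound, multiplying numerator and denominator by $c^2$ in the Riemann variable formula of Section 2.2 gives
\[
  \frac{z - w}{2}(\rho, S) = \int_0^\rho f(\sigma, S)\, \text{d} \sigma.
\]
By Assumption \ref{REAssumpInitialData} together with the transport equation $\partial_t S + u \partial_x S = 0$ in \eqref{releul1D}, the entropy values realised by the solution satisfy $S \in [-B, B]$, so $S$ is an admissible argument in the infimum defining $\phi$. Hence $\phi(\sigma) \le f(\sigma, S)$ pointwise and integrating yields $\psi(\rho) \le \frac{z-w}{2}$.

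The main obstacle is the degeneracy at $\sigma = 0$, where $\sqrt{\partial_\sigma P}$ and $c^2\sigma + P$ both vanish. Integrability there hinges on the precise leading order expansion of $P$ in $\rho^\gamma$ coming from \eqref{Prho}, together with uniformity of this expansion in $S \in [-B, B]$; the uniformity is exactly what is needed to transport the integrability estimate through the infimum over the compact entropy range and to conclude continuity of $\psi$ at $\rho = 0$.
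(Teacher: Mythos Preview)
Your proposal is correct and follows essentially the same route as the paper: the key input is the asymptotic order $\frac{c^2\sqrt{\partial_\rho P}}{c^2\rho+P}\sim\mathcal{O}(\rho^{(\gamma-3)/2})$ from Appendix~A, which gives integrability near $\sigma=0$ and drives all four conclusions. The paper's proof is a one-line reference to this asymptotic; your version supplies the details the paper leaves implicit (continuity of the infimum via compactness, the pointwise comparison $\phi(\sigma)\le f(\sigma,S)$ for the upper bound), and your invocation of the transport equation to place $S\in[-B,B]$ is strictly speaking unnecessary at this stage---the inequality $\psi(\rho)\le\frac{z-w}{2}(\rho,S)$ holds for any $S\in[-B,B]$ directly from the definition of the infimum---but it does no harm.
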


\begin{proof}
It follows from  Proposition \ref{AProp_OrderOfPetc} in Appendix A that 
\begin{equation*}
     \frac{c^2\sqrt{\partial_\rho P(\rho, S)}}{c^2 \rho +P(\rho,S)} \sim \mathcal{O}(\rho^{\frac{\gamma-3}{2}}), \quad  \text{as} \quad  \rho \to 0,
\end{equation*}
which quickly implies the desired conclusions.
\end{proof}

\begin{proposition}\label{PropL}
Let $\kappa>0$ be  a constant. Then there exists an increasing function: 
\begin{equation}
    \Psi(\kappa) \coloneqq \max_{(\rho,S) \in [0,\kappa]\times[-B,B]} \displaystyle \frac{2\sqrt{\partial_\rho P(\rho,S)}}{(z-w)(\rho,S)}.
\end{equation} 

\end{proposition}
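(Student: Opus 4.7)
The plan is to show that the function
$$F(\rho,S) := \frac{2\sqrt{\partial_\rho P(\rho,S)}}{(z-w)(\rho,S)},$$
which a priori is defined only for $\rho>0$, admits a continuous extension to the compact rectangle $[0,\kappa]\times[-B,B]$. Once this is established, $\Psi(\kappa)$ is attained by compactness and continuity, and monotonicity in $\kappa$ is immediate from the nested inclusion $[0,\kappa_1]\times[-B,B]\subset[0,\kappa_2]\times[-B,B]$ for $\kappa_1\le\kappa_2$.

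First I would verify that $F$ is continuous on the open part $(0,\kappa]\times[-B,B]$. On this set, Proposition \ref{PropG} gives $(z-w)(\rho,S)\ge 2\psi(\rho)>0$, while smoothness of $P(\rho,S)$ and $\partial_\rho P(\rho,S)$ in both arguments follows from the implicitly defined constitutive relation \eqref{Prho} (alternatively, from the explicit formula \eqref{PnS} after expressing $n$ in terms of $(\rho,S)$). So continuity of $F$ on $(0,\kappa]\times[-B,B]$ is routine, and the genuine issue is the boundary $\rho=0$.

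To handle $\rho\to 0^+$, I would invoke the asymptotic estimate from Proposition \ref{AProp_OrderOfPetc} of Appendix A already used in the proof of Proposition \ref{PropG}: as $\sigma\to 0^+$, uniformly in $S\in[-B,B]$,
$$\sqrt{\partial_\sigma P(\sigma,S)}\sim c_1(S)\,\sigma^{(\gamma-1)/2},\qquad \frac{c^2\sqrt{\partial_\sigma P(\sigma,S)}}{c^2\sigma+P(\sigma,S)}\sim c_2(S)\,\sigma^{(\gamma-3)/2},$$
with $c_1(S),c_2(S)$ continuous and strictly positive on $[-B,B]$, and in fact $c_1(S)/c_2(S)$ independent of $S$ up to the explicit polytropic factor. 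Integrating the second expansion yields
$$(z-w)(\rho,S)=2\int_0^\rho\frac{\sqrt{\partial_\sigma P(\sigma,S)}}{\sigma+P(\sigma,S)/c^2}\,\dd\sigma\sim\frac{4c_2(S)}{\gamma-1}\rho^{(\gamma-1)/2},\quad \rho\to 0^+,$$
uniformly in $S$. Dividing, $F(\rho,S)\to\frac{(\gamma-1)c_1(S)}{2c_2(S)}$ as $\rho\to 0^+$, uniformly in $S$; setting this limit as $F(0,S)$ provides the continuous extension.

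Combining these two steps, $F$ is continuous on $[0,\kappa]\times[-B,B]$, which is compact, so the maximum defining $\Psi(\kappa)$ is finite and attained. Monotonicity in $\kappa$ is then automatic.

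The main technical obstacle is the uniformity in $S$ of the asymptotic expansions near $\rho=0$: pointwise convergence alone would not produce a continuous extension, and one must control the error terms in the Taylor-type expansions of $P$ and $\partial_\rho P$ uniformly over the compact range $S\in[-B,B]$. This is not deep, since the polytropic structure \eqref{PnS} makes the $S$-dependence smooth and multiplicative, but it does require explicit verification rather than a hand-wave.
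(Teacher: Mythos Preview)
Your proposal is correct and follows essentially the same approach as the paper: both invoke the asymptotics from Proposition \ref{AProp_OrderOfPetc} to show that $\sqrt{\partial_\rho P}$ and $(z-w)/2$ are each of order $\rho^{(\gamma-1)/2}$ as $\rho\to 0$, so that their ratio remains bounded. The paper's proof is terse to the point of only stating those two asymptotics and declaring the conclusion immediate, whereas you spell out the continuous-extension argument and flag the need for uniformity in $S\in[-B,B]$; this extra care is justified but does not constitute a different method.
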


\begin{proof}
It follows from  Proposition \ref{AProp_OrderOfPetc} in Appendix A that 
\begin{equation*}
    \begin{split}
        \sqrt{\partial_\rho P} \sim \mathcal{O}(\rho^{\frac{\gamma-1}{2}}) \quad \text{and } \quad 
        \frac{(z-w)}{2}(\rho,S) \sim \mathcal{O}(\rho^{\frac{\gamma-1}{2}}) \quad \text{as} \quad \rho \to 0,
    \end{split}
\end{equation*}
which quickly yields the desired conclusion.
\end{proof}

Denote $E(c,B) \coloneqq \sup \left\{ \rho \ | \ \sqrt{\partial_\rho P} < c, \ |S| \leq  B \right\}$.
It follows from   Proposition \ref{AProp_drhoPetc} in Appendix A and the relation between $n$ and $\rho$ that, such  $E(c,B)$ exists and is finite.

\begin{proposition} \label{ACoro_aleqK}
There exists a constant $K(c,B)  $ depending only on  $(c,B)$ such that 
\begin{equation}
    \label{a<ri1}
      K(c,B)=\sup_{\mathcal{M}} \frac{|a(\rho,S)|}{ (z-w)(\rho,S)} \quad \text{with} \quad  \mathcal{M}=\{ (\rho,S) \in [0,E(c,B))\times [-B,B]\}.
\end{equation}
\end{proposition}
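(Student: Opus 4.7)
The strategy is to show that the ratio $|a|/(z-w)$, which is well defined and continuous on the open set $(0,E(c,B))\times[-B,B]$, admits a continuous extension to the compact set $[0,E(c,B)]\times[-B,B]$, from which finiteness of the supremum follows at once by a standard compactness argument. Since $\mathcal{M} \subset [0,E(c,B)]\times[-B,B]$, this will yield the constant $K(c,B)$.

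First I would handle continuity of the ratio on the interior. By the implicit relation \eqref{Prho} together with Proposition~\ref{AProp_drhoPetc} (Appendix~A), the pressure $P(\rho,S)$ together with all its first derivatives $\partial_\rho P$, $\partial_S P$ is jointly continuous on $[0,E(c,B)]\times[-B,B]$, and $\sqrt{\partial_\rho P}$ is bounded above by $c$ on this set. From the integral formula for the Riemann variables in \eqref{wz} and Proposition~\ref{PropG}, $(z-w)(\rho,S)$ is jointly continuous and strictly positive for $\rho>0$. Consequently $a(\rho,S)$ (recall \eqref{tidleAlphaBeta}) and $(z-w)$ are continuous on the closed box, and the ratio $|a|/(z-w)$ is continuous wherever $\rho>0$. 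In particular, at the right endpoint $\rho=E(c,B)$ there is nothing to prove: $(z-w)$ is bounded below by a positive constant there (it is a continuous, strictly increasing function of $\rho$ by Proposition~\ref{PropG}), and $a$ is finite, so the ratio extends continuously.

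The only delicate point is the limit as $\rho\to 0^+$. Here both numerator and denominator vanish, and one must show they do so at the same rate. Using Proposition~\ref{AProp_OrderOfPetc} (Appendix~A), the relevant orders as $\rho\to 0$ are
\begin{equation*}
\sqrt{\partial_\rho P} \sim \rho^{\frac{\gamma-1}{2}},\qquad
P+\rho c^2 \sim c^2\rho,\qquad
\partial_S P \sim \rho^{\gamma},\qquad
(z-w)(\rho,S)\sim \rho^{\frac{\gamma-1}{2}},
\end{equation*}
uniformly in $S\in[-B,B]$. The second (algebraic) term in $a$ is therefore of order
\begin{equation*}
\frac{c^2}{P+\rho c^2}\,\frac{\partial_S P}{\sqrt{\partial_\rho P}} \;\sim\; \rho^{-1}\cdot \rho^{\gamma}\cdot \rho^{-\frac{\gamma-1}{2}} \;=\; \rho^{\frac{\gamma-1}{2}},
\end{equation*}
matching the order of $(z-w)$. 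For the first term in $a$, I would differentiate under the integral sign in $\frac{w-z}{2}=-\int_0^{\rho}\frac{\sqrt{\partial_\sigma P(\sigma,S)}}{\sigma+P(\sigma,S)/c^2}\,\mathrm{d}\sigma$ (justified by the smoothness of the integrand for $\sigma>0$ and an integrable dominating function near $\sigma=0$) and apply the same asymptotic analysis to $\partial_S$ of the integrand; a short calculation using implicit differentiation of \eqref{Prho} shows the integrand's $S$-derivative is again of order $\sigma^{(\gamma-3)/2}$ (integrable since $\gamma>1$), yielding $\partial_S(w-z)/2 = O(\rho^{(\gamma-1)/2})$ uniformly in $S\in[-B,B]$. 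Combining, $|a|/(z-w)$ stays bounded as $\rho\to 0$, and the limit can be taken to define the extension at $\rho=0$.

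With continuity established on the compact rectangle $[0,E(c,B)]\times[-B,B]$, the supremum of $|a|/(z-w)$ over $\mathcal{M}$ is finite and we denote it $K(c,B)$. The main obstacle is the rate-matching at $\rho=0$: one has to confirm that differentiating the integral defining $z-w$ in $S$ does not introduce an extra power of $\rho^{-1}$, i.e.\ that the cancellations built into the implicit relation \eqref{Prho} preserve the expected order $\rho^{(\gamma-1)/2}$. This reduces to the asymptotic analysis of $\partial_\rho P$, $\partial_S P$ near $\rho=0$ established in Appendix~A, after which the rest of the argument is a straightforward continuity-and-compactness conclusion.
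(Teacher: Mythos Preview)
Your approach is essentially the same as the paper's: both rest on the order-matching $a(\rho,S)\sim\mathcal{O}(\rho^{(\gamma-1)/2})$ and $(z-w)(\rho,S)\sim\mathcal{O}(\rho^{(\gamma-1)/2})$ as $\rho\to 0$ from Proposition~\ref{AProp_OrderOfPetc}, combined with continuity away from $\rho=0$. The paper simply writes the explicit formula for $a$ (differentiating the integral in $S$, exactly as you propose) and invokes these orders, whereas you spell out the surrounding compactness framework more carefully; one small overstatement is that the order-matching only directly gives \emph{boundedness} of the ratio near $\rho=0$, not a continuous extension, but boundedness together with continuity on $[\delta,E(c,B)]\times[-B,B]$ already suffices for the supremum to be finite.
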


\begin{proof}
By explicit computation, one can check that
\begin{equation*}
    a = \displaystyle - \int_{0}^{\rho}
    \frac{c^2 \partial_{\sigma S}P}{2\sqrt{\partial_\sigma P}(c^2 \sigma +P)} \text{d} \sigma 
    + \int_{0}^{\rho} \frac{c^2 \sqrt{\partial_\sigma P} \partial_S P}{(c^2 \sigma +P)^2} \text{d} \sigma + \frac{c^2}{(P+\rho c^2)}\frac{\partial_S P}{\sqrt{\partial_\rho P}}.
\end{equation*}
It follows from  Proposition \ref{AProp_OrderOfPetc} in Appendix A that 
\begin{equation*}
    a(\rho,S) \sim \mathcal{O}\left(\rho^{\frac{\gamma-1}{2}}\right)\quad \text{and } \quad  \frac{z-w}{2}(\rho,S) \sim \mathcal{O}\left(\rho^{\frac{\gamma-1}{2}}\right) \quad \text{as} \quad \rho \to 0,
\end{equation*}
which quickly yields the desired conclusion.

\end{proof}

Then one can  denote $ V \coloneqq  \exp\left(K(c,B)\|S_0\|_{BV}\right)$, and the following quantities:
\begin{equation}\label{kcb}
\begin{split}
    U_1 \coloneqq &\sup_x|w_0| V + K(c,B)V^2 \sup_x|z_0| \|S_0\|_{BV},\\
    U_2 \coloneqq & \sup_x|z_0| V + K(c,B)V^2 \sup_x|w_0| \|S_0\|_{BV},\\
    \max |w| \coloneqq & U_1 + K(c,B)^2V^2\|S_0\|_{BV}^2 \exp\left(K(c,B)^2V^2 \|S_0\|_{BV}^2\right) U_1,\\
    \max |z| \coloneqq & U_2 + K(c,B)^2V^2\|S_0\|_{BV}^2 \exp\left(K(c,B)^2V^2 \|S_0\|_{BV}^2\right) U_2,\\
    M_2 \coloneqq & \psi^{-1}\left(\frac{\max |z| + \max|w|}{2}\right).
    \end{split}
\end{equation}

\begin{assumption} \label{AssumptionSoundSpeed<c}

\begin{equation*}
    \Psi(M_2)\left(\frac{\max |z| + \max|w|}{2}\right) < c.
\end{equation*}
\end{assumption}

Finally, denote two  constants $N_1$ and $N_2$ depending only on $(z_0,w_0,S_0)$, $R$, $c$ and $\gamma$, by 
\begin{equation*}
    \begin{split}
        N_1 \coloneqq  \sup_{\mathcal{N}} \sqrt{\frac{2}{\partial_w \lambda_3 e^{-h}} \left( \frac{|a_3|^2}{2\partial_w \lambda_3 e^{-h}} + |a_4| \right)},\  \  
        N_2 \coloneqq  \sup_{\mathcal{N}}  \sqrt{\frac{2}{\partial_z \lambda_2 e^{-g}} \left( \frac{|b_3|^2}{2\partial_z \lambda_2 e^{-g}} + |b_4| \right)},
    \end{split}
\end{equation*}
where 
\begin{equation*}
    \mathcal{N} \coloneqq \left\{(z,w,S)\ | \ |z| \leq \max|z|, \ |w| \leq \max|w|, \ |S| \leq B \right\}.
\end{equation*}

Based on the above preparations, now we can state the  second  result as follows:
\begin{theorem}\label{GradientBlowUp}
Let Assumptions \ref{REAssumpInitialData}-\ref{AssumptionSoundSpeed<c} hold.
If there exists a point $x^* \in \mathbb{R}$ such that 
\begin{equation}
    \label{yandq<N}
    r(0,x^*) < -N_1 \quad \text{or} \quad q(0,x^*) < -N_2,
\end{equation}
then for the $C^1$ solution $(\rho,u,S)$ to the Cauchy problem \eqref{releul1D} with \eqref{non-initial}, $\partial_x \rho$ and/or $\partial_x u$ must  blow up in a finite time.
\end{theorem}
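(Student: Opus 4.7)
The strategy is to derive along the characteristic fields $\partial_3$ and $\partial_2$ Riccati-type ODEs for the composite quantities $r$ and $q$ of \eqref{yandq}, and to show that the thresholds $-N_1, -N_2$ are calibrated precisely so that an initial value below them drives $r$ or $q$ to $-\infty$ in finite time. Concretely, I would first derive the ODEs satisfied by $r$ and $q$. Starting from the inhomogeneous transport equations for the Riemann variables obtained by diagonalising \eqref{releul1D} (in which $\partial_2 z$ and $\partial_3 w$ are each linear in $\eta = \partial_x S$), one differentiates in $x$ to obtain an equation for $\alpha = \partial_x w$. That equation contains cross-terms coupling $\alpha$ with $\eta$; the substitution $\widetilde\alpha = \alpha - a\eta$ with $a$ as in \eqref{tidleAlphaBeta} cancels the linear $\eta$-forcing, the integrating factor $e^h$ kills the linear-in-$\widetilde\alpha$ coefficient, and the further correction $-L\eta\widetilde n$ in the definition $r = e^h \widetilde\alpha - L\eta\widetilde n$ absorbs the remaining mixed and pure $\eta$ terms (these choices being made precisely for this purpose; see Appendix C). The outcome is the clean Riccati form
\begin{equation*}
    \partial_3 r = -\partial_w\lambda_3\, e^{-h}\, r^2 + a_3\, r + a_4,
\end{equation*}
with $a_3, a_4$ as in \eqref{coeffientsA}, and a symmetric computation along $\partial_2$ yields $\partial_2 q = -\partial_z\lambda_2\, e^{-g}\, q^2 + b_3\, q + b_4$.

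Next I would establish uniform $L^\infty$ bounds on $(w,z,S)$, and hence on $\rho$, throughout the maximal slab of existence. The transport equation $\partial_t S + u\partial_x S = 0$ gives $\|S\|_{L^\infty}\le B$ immediately. For $w$ and $z$, Proposition \ref{ACoro_aleqK} supplies $|a|\le K(c,B)(z-w)$ on $\mathcal{M}$; since $S$ is transported, the $L^1$ norm of $\eta$ along any characteristic is controlled by $\|S_0\|_{BV}$. A two-stage Gronwall argument applied to the inhomogeneous $\partial_2 z$ and $\partial_3 w$ equations then produces exactly the explicit bounds $\max|w|, \max|z|$ in \eqref{kcb}; Proposition \ref{PropG} upgrades this to $\rho\le M_2$, and Assumption \ref{AssumptionSoundSpeed<c} together with Proposition \ref{PropL} secures $\sqrt{\partial_\rho P}<c$, keeping the system strictly hyperbolic. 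Consequently $(w,z,S)$ remains in the compact set $\mathcal{N}$, on which all coefficients $a_0,\dots,a_4, b_0,\dots,b_4$ as well as $e^{\pm h}, e^{\pm g}, L, M, \widetilde n$ are uniformly bounded, and $\partial_w\lambda_3, \partial_z\lambda_2$ are bounded away from zero by genuine nonlinearity.

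Then I would run the Riccati blow-up. Along the $\partial_3$-characteristic through $x^*$, the AM--GM estimate $|a_3 r|\le \tfrac12\partial_w\lambda_3 e^{-h} r^2 + |a_3|^2/(2\partial_w\lambda_3 e^{-h})$ combined with the definition of $N_1$ gives
\begin{equation*}
    \partial_3 r \;\le\; -\tfrac12\partial_w\lambda_3\, e^{-h}\, r^2 + \tfrac12\partial_w\lambda_3\, e^{-h}\, N_1^2,
\end{equation*}
so $\partial_3 r \le -\tfrac14\partial_w\lambda_3 e^{-h} r^2$ whenever $r^2 > 2 N_1^2$. If $r(0,x^*)<-N_1$, a barrier argument shows that $r$ cannot cross $-N_1$ along the characteristic and therefore satisfies $r' \le -c_0 r^2$ with a uniform $c_0>0$, driving $r\to-\infty$ in time at most $1/(c_0|r(0,x^*)|)$; the argument for $q$ is symmetric. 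Finally, boundedness of $L,\eta,\widetilde n, e^h$ forces $\widetilde\alpha\to-\infty$, boundedness of $a$ forces $\alpha = \partial_x w\to-\infty$, and the defining relation between $w$ and $(\rho,u,S)$ (together with the bounded transport of $S$) forces at least one of $\partial_x\rho$, $\partial_x u$ to blow up.

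The main obstacle is Step 1: the raw equations for $\alpha, \beta$ are heavily cluttered with $\alpha\eta, \beta\eta, \eta^2$ and pure $\eta$ terms, and the computational heart of the argument is verifying that the composite corrections $(\widetilde\alpha, e^h, L)$ and $(\widetilde\beta, e^g, M)$ eliminate all of them to produce pure Riccati equations with coefficients matching \eqref{coeffientsA}, \eqref{coeffientsB} exactly. A secondary but still delicate hurdle is the closed $L^\infty$ bound on $(w,z)$ in Step 2; unlike the isentropic case the Riemann variables are no longer constant along characteristics, so one must iterate Gronwall against $\|S_0\|_{BV}$ to break the circular coupling between the growth of $w,z$ and the entropy contribution.
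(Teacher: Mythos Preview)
Your overall architecture matches the paper closely: derive the Riccati ODEs for $r$ and $q$ (the paper's Theorem \ref{REDecoupledODEs}), establish uniform $L^\infty$ bounds on $(w,z,S)$ via the iterated Gronwall/BV argument (the paper's Lemma \ref{RERiemannInvairantBound} and Theorem \ref{REbound}), reduce to the differential inequality $\partial_2 q \le -\tfrac12\partial_z\lambda_2 e^{-g}(q^2-N_2^2)$, and run the barrier/Riccati mechanism. All of this is correct.

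The genuine gap is in your Step 3, where you assert ``$r' \le -c_0 r^2$ with a uniform $c_0>0$''. This amounts to claiming a uniform positive lower bound on the leading Riccati coefficient $\partial_w\lambda_3\,e^{-h}$, and that is false in general. The paper computes (Lemma \ref{LemmaApproxhandg} together with \eqref{yaoming}) that $\partial_z\lambda_2\,e^{-g}\approx \rho^{(3-\gamma)/4}$; for $1<\gamma<3$ this degenerates to zero as $\rho\to 0$, and the compact set $\mathcal N$ you invoke does \emph{not} exclude the locus $z=w$ (i.e.\ $\rho=0$). There is no a priori uniform positive lower bound on $\rho$ along the characteristic: Theorem \ref{REbound} only gives pointwise positivity via a Gronwall estimate whose exponent contains $\int_0^t|\partial_x w|$, which is exactly the quantity one is trying to control. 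So the inequality you need is not $q'\le -c_0 q^2$ but $-\partial_2(1/q)\le -\tfrac{\upsilon}{2(1+\upsilon)}\partial_z\lambda_2 e^{-g}$, and you must show that $\int_0^t \partial_z\lambda_2 e^{-g}\,ds\to\infty$.

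The paper closes this gap with an additional, nontrivial step you have omitted entirely: a \emph{time-dependent} density lower bound $\rho^{(3-\gamma)/4}(t,x^2(t))\ge \frac{1}{D(1+t)}$ (Lemma \ref{DensityLowerBound}). This is obtained by first proving an upper bound $q\le \overline{Q}$, $r\le \overline{R}$ (Lemma \ref{yqleqYQ}, using the same barrier reasoning you outline), then feeding that upper bound back into the equation for $\partial_2(z-w)$ to get $\partial_2(\rho^{(\gamma-3)/2})\le D$. The resulting $O(1/t)$ decay of $\rho^{(3-\gamma)/4}$ is just slow enough that the integral diverges logarithmically, which suffices. This density lower bound is, as the introduction emphasises, the essential difficulty of the whole problem; your proposal bypasses it by an incorrect uniformity claim.

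A secondary imprecision: you assert that on $\mathcal N$ ``all coefficients $a_0,\dots,b_4$ as well as $e^{\pm h},e^{\pm g},L,M,\widetilde n$ are uniformly bounded''. Since $\mathcal N$ contains $\rho=0$ and $e^{h}\approx \rho^{(\gamma-3)/4}$, this is false for $\gamma<3$. What is actually true (and what makes $N_1,N_2$ finite) is that the \emph{ratios} $|b_i|/(\partial_z\lambda_2 e^{-g})$ are bounded; the paper devotes Lemmas \ref{b_3bound}--\ref{sumaozhen} to verifying this by tracking powers of $\rho$ term by term, and this is not a formality.
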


\begin{remark}
First, Assumption \ref{AssumptionSoundSpeed<c} is used to make sure that  $|\sqrt{\partial_\rho P}|<c$ holds for $C^1$ solutions of the Cauchy problem  \eqref{releul1D} with \eqref{non-initial}
(see Theorem \ref{REbound}).

Second, owing to the complex structure of \eqref{releul1D},
it is not easy to give a clear definition of R/C characters as
the isentropic case. Here we formally say that  $(\rho_0,u_0,S_0)$ contain compression, if  there exists a point $x^* \in \mathbb{R}$ such that $r(0,x^*) < 0$ or $ q(0,x^*) < 0$. That's why we state  that $N_i$ (i=1,2) represents the strength of the  initial compression.
\end{remark}

\section{Singularity formation  in  $2\times 2$ system}


This section will be devoted to proving Theorem 2.1.  It is well-known that  under Assumption \ref{cond}, 
the local-in-time existence of the unique   $C^1$ solution $(\rho, u)$   to the Cauchy problem \eqref{RE}    can be established  by the  classical characteristic  method; c.f. Theorem 4.1 on p. 66 of \cite{ta1985boundary}, or see also \cites{Daformos, evans, Lidaqian}. For simplicity, in this section, we denote  $C_g>0$  a generic  constant depending only on $(z_0,w_0)$, $\kappa$, $\gamma$ and $c$, and   independent of the time.

	

\subsection{Preliminaries} In the rest of this section,  let $(\rho,u)$ be  the $C^1$ solution satisfying Assumption \ref{cond}  of  $\eqref{RE}$ in $[0,T] \times \mathbb{R}$   for some $T>0$. Before giving the detailed proof, we  give two fundamental lemmas that will be used later.
First, we show that
			\begin{lemma}\label{lemma5.1}
				\begin{equation}\label{guga}
				|u(t,x)|<c,\quad \sqrt{P'(\rho)}(t,x)<c,\quad \rho(t,x)>0 \quad \text{for} \quad (t,x)\in [0,T] \times \mathbb{R},
				\end{equation}
and the following Riccati ODEs  hold:
				\be  \label{vbn1}\partial_- \xi= -\big(\e^{-{h}_1}{\partial_w \lambda}_{1 }\big)\xi^2, \hspace{2mm} \partial_+\zeta= -\big(\e^{-{h}_2}{ \partial_ z\lambda}_{2 }\big)\zeta^2. \ee 
				Moreover, let $x^i(t,x^i_0)$ ($i=1$, $2$) be two characteristics  starting from $(0,x^i_0)$. Then
				\be \label{vbn}
		\begin{split}
		\frac{1}{\xi(t,x^1(t,x^1_0))} =& \frac{1}{\xi(0,x^1_0) } + \int_0^t \big(\mathrm{e}^{-h_1}\partial_w \lambda_{1}\big) (\sigma,x^1(\sigma,x^1_0))\hspace{.5mm} \text{d}\sigma,\\
		\frac{1}{\zeta(t,x^2(t,x^2_0))} =& \frac{1}{\zeta(0,x^2_0) } + \int_0^t \big(\mathrm{e}^{-h_2}\partial_z \lambda_{2}\big) (\sigma,x^2(\sigma,x^2_0))\hspace{.5mm} \text{d}\sigma.   
			 	\end{split}
		    \ee 	
				
			\end{lemma}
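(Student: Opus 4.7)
The lemma splits into two essentially independent parts: the pointwise bounds \eqref{guga} and the Riccati system \eqref{vbn1}--\eqref{vbn}. I would establish the bounds first by tracing the Riemann invariants along characteristics. Since $\partial_+ z = 0$ and $\partial_- w = 0$, both invariants are conserved along their respective characteristics; because a $C^1$ solution has $\rho > 0$ by Definition 2.1 and the initial characteristic speeds are subluminal under Assumption \ref{cond}, a continuity argument shows that every point of $[0,T]\times\mathbb{R}$ is reached by characteristics tracing back to $t=0$, whence
\[ z(t,x) \in [\inf_x z_0,\; \sup_x z_0], \qquad w(t,x) \in [\inf_x w_0,\; \sup_x w_0]. \]
Inverting the half-sum formula for $z + w$ in \eqref{wz} gives $u = c\tanh\!\left(\tfrac{z+w}{2c}\right)$, so $|u| < c$ follows immediately from the finiteness of $z + w$. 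For the sound-speed bound, the substitution $y = k\rho^{(\gamma-1)/2}/c$ in the Riemann integrand with the polytropic law \eqref{P} yields the closed form
\[ z - w \;=\; \frac{4c\sqrt{\gamma}}{\gamma-1}\;\text{Arctan}\!\left(\tfrac{\sqrt{P'(\rho)}}{c\sqrt{\gamma}}\right), \]
and combining $z - w \le \sup_x z_0 - \inf_x w_0$ with Assumption \ref{cond} and the monotonicity of $\text{Arctan}$ yields $\sqrt{P'(\rho)} < c$.

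\textbf{Riccati ODEs.} Differentiating $\partial_+ z = 0$ and $\partial_- w = 0$ in $x$ and expanding $\partial_x \lambda_i = \partial_z\lambda_i\,\alpha + \partial_w\lambda_i\,\beta$ produces
\[ \partial_+\alpha = -(\partial_z\lambda_2\,\alpha + \partial_w\lambda_2\,\beta)\,\alpha, \qquad \partial_-\beta = -(\partial_z\lambda_1\,\alpha + \partial_w\lambda_1\,\beta)\,\beta. \]
To eliminate the unwanted mixed quadratic term I would invoke the integrating factors $e^{h_1}$ and $e^{h_2}$. The key computation is that $\partial_- h_1 = h_{1,z}\,\partial_- z = h_{1,z}(\lambda_1 - \lambda_2)\alpha = \partial_z\lambda_1\,\alpha$ by \eqref{h1h2def} (the $w$-dependence of $h_1$ drops out because $\partial_- w = 0$); applying this in the product rule for $\partial_-(e^{h_1}\cdot)$ exactly cancels the $\partial_z\lambda_1\,\alpha$ coefficient and leaves the Riccati form \eqref{vbn1}. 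The $\partial_+\zeta$ equation follows symmetrically using $h_2$ along the $\lambda_2$-characteristic.

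\textbf{Integration and main obstacle.} Dividing \eqref{vbn1} by $\xi^2$ (resp.\ $\zeta^2$) turns the Riccati equation into $\partial_-(1/\xi) = e^{-h_1}\partial_w\lambda_1$, and integrating this ODE along the characteristic $x^1(\sigma;x_0^1)$ (resp.\ $x^2$) from $\sigma = 0$ to $\sigma = t$ delivers \eqref{vbn}. The only non-mechanical step in the whole proof is the closed-form evaluation of the Riemann integrand under the polytropic pressure law, which converts the abstract trigonometric hypothesis on $\sup_x z_0 - \inf_x w_0$ into the sharp pointwise bound $\sqrt{P'(\rho)} < c$; the threshold in Assumption \ref{cond} is precisely calibrated so that this inequality is strict, and this sharpness is what enables the subsequent lower-bound analysis on $\rho$ in Section 3 to proceed.
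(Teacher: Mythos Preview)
Your argument is correct and matches the paper's: the pointwise bounds come from the same closed-form identity \eqref{Pprimeformularel}, $\rho>0$ is built into Definition~\ref{ 2.1-1} (the paper also cites \cite{RuanZhu}), and the Riccati derivation---which the paper omits as ``direct calculations''---is precisely the standard Lax integrating-factor computation you outline. One labeling caveat worth recording: the cancellation you carry out along $\partial_-$ actually applies $e^{h_1}$ to $\beta=\partial_x w$ (it is $\partial_-w=0$ that suppresses the unconstrained $w$-dependence of $h_1$ in $\partial_-h_1$), and the paper's own later use in \eqref{wwzz} likewise treats $\zeta=e^{h_2}\partial_x z$; both are consistent with the ODEs \eqref{vbn1} but not with the literal assignment $\xi=e^{h_1}\alpha$, $\zeta=e^{h_2}\beta$ in \eqref{zwode}, which appears to carry a typo in the $\alpha/\beta$ pairing rather than reflecting any gap in your reasoning.
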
		
	\begin{proof}	
	First, the facts $	|u(t,x)|<c$ and $\sqrt{P'(\rho)}(t,x)<c$ directly follow from 
		\be \label{Pprimeformularel}
		\begin{split}
		\big\lvert \ln\Big(\frac{c+u}{c-u}\Big)\big\rvert = \big\lvert \frac{w+z}{c} \big\rvert \leq & \frac{2M_0}{c},\\
		 \sqrt{P'(\rho)} = k \sqrt{\gamma} \rho^{(\gamma-1)/2 } =& c\sqrt{\gamma}\hspace{1mm} \text{\text{Tan}}\Big( \frac{(z-w)(\g-1)}{4c \sqrt{\g}} \Big),
		    	\end{split}
		    \ee 
	where one has used \eqref{wz}-\eqref{REODE}  and  Assumption \ref{cond}.
	
	Second, the proof for the fact $\rho(t,x)>0$ can be found  on p. 997-998 of \cite{RuanZhu}.

At last, the proof for \eqref{vbn1}-\eqref{vbn} follows from direct calculations, and here we omit it.

\end{proof}

	Second, one  rewrites $\lambda_i$ and $h_i$ ($i=1,2$) as functions of $w$ and $z$.
\begin{lemma}\label{lemma5.5}
				Define the following functions
				\begin{equation}
				\label{fg}
				\begin{split}
				H_1(w,z):=& \frac{w+z}{c} + \ln\Big( \frac{1- \sqrt{\gamma}\hspace{1mm} \text{Tan}\big(\frac{(z-w)(\gamma-1)}{4 c \sqrt{\gamma}}\big)}{1+\sqrt{\gamma}\hspace{1mm}\text{Tan}\big( \frac{(z-w)(\gamma-1)}{4 c \sqrt{\gamma}}\big) } \Big), \\[1pt]
					H_2(w,z):=&  \frac{w+z}{c} + \ln\Big( \frac{1+ \sqrt{\gamma}\hspace{1mm} \text{Tan}\big( \frac{(z-w)(\gamma-1)}{4 c \sqrt{\gamma}}\big)}{1-\sqrt{\gamma}\hspace{1mm}\text{Tan}\big( \frac{(z-w)(\gamma-1)}{4 c \sqrt{\gamma}}\big) } \Big),\\
					    Y(w,z) \coloneqq & \text{Arctan}\big(\frac{k \rho^{(\gamma-1)/2}}{c} \big)= \frac{(z-w)(\gamma-1)}{4 c \sqrt{\gamma}}.
				\end{split} 
				\end{equation}
Then one has 
\begin{equation}
\label{fgmm}
				\begin{split}
\lambda_1 =& c\hspace{.5mm} \left( 1 - \frac{2}{\e^{H_1} +1} \right) , \quad  \lambda_2=  c\hspace{.5mm} \left( 1 - \frac{2}{\e^{H_2} +1} \right),\\
	h_1=& \frac{3\gamma-1}{2\gamma-2}\ln(\text{Cos}(Y)) + \frac{\gamma-3}{2\gamma-2} \ln(\text{Sin}(Y)) + \frac{z-w}{2c} \\
			& - \ln \Big(\big(1+   \e^{\frac{w+z}{c}}\big)\text{Cos}(Y) - \big(-1+ \e^{\frac{w+z}{c}}\big)\hspace{.5mm} \sqrt{\gamma} \hspace{.5mm}\text{Sin}(Y)   \Big), \\
			h_2= &\frac{3\gamma-1}{2\gamma-2}\ln(\text{Cos}(Y)) + \frac{\gamma-3}{2\gamma-2} \ln(\text{Sin}(Y)) + \frac{z-w}{2c}  \\
			& -\ln \Big(\big(\e^{\frac{2w}{c}}+   \e^{\frac{z-w}{c}}\big)\text{Cos}(Y) + \big(\e^{\frac{2w}{c}}-  \e^{\frac{z-w}{c}}\big)\hspace{.5mm}\sqrt{\gamma} \hspace{.5mm}\text{Sin}(Y)   \Big).
			\end{split}
			\end{equation}
\end{lemma}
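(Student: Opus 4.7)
The plan is to re-express every relevant quantity in the Riemann-invariant coordinates $(w,z)$ and then verify the stated formulas by direct algebraic manipulation. From \eqref{wz} we already have $\ln\bigl((c+u)/(c-u)\bigr) = (w+z)/c$, while \eqref{Pprimeformularel} gives $\sqrt{P'(\rho)} = c\sqrt{\gamma}\tan Y$ with $Y = (z-w)(\gamma-1)/(4c\sqrt{\gamma})$. Together these make $u$ and $\sqrt{P'(\rho)}$ explicit functions of $(w,z)$, which I would use throughout.

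For the $\lambda_i$ formulas, the key observation is that \eqref{lambda12} is precisely the relativistic velocity addition of $u$ with $\pm\sqrt{P'}$. A short algebraic computation directly from \eqref{lambda12} yields the factorisation identities
$$\frac{c+\lambda_1}{c-\lambda_1} = \frac{c+u}{c-u}\cdot\frac{c-\sqrt{P'}}{c+\sqrt{P'}}, \qquad \frac{c+\lambda_2}{c-\lambda_2} = \frac{c+u}{c-u}\cdot\frac{c+\sqrt{P'}}{c-\sqrt{P'}}.$$
Taking logarithms and substituting the $(w,z)$ expressions produces $\ln\bigl((c+\lambda_i)/(c-\lambda_i)\bigr) = H_i(w,z)$ as stated in \eqref{fg}; inverting the relation gives immediately $\lambda_i = c\bigl(1 - 2/(e^{H_i}+1)\bigr)$, which is the first half of \eqref{fgmm}.

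For the $h_i$ formulas, I would start from the defining ODE $\partial_z h_1 = \partial_z\lambda_1/(\lambda_1-\lambda_2)$ (and its $h_2$ analogue in $w$), which fixes $h_1$ only up to an additive function of $w$. Using the representations from the previous step and the direct computation $\lambda_2-\lambda_1 = 2c^2\sqrt{P'}(c^2-u^2)/(c^4-u^2 P')$ together with $c^2-u^2 = 4c^2 e^{(w+z)/c}/(e^{(w+z)/c}+1)^2$, I would rewrite the right-hand side of the ODE entirely in terms of $Y$ and $(w+z)/c$, reducing it to a sum of simple $z$-logarithmic derivatives. The candidate expression in \eqref{fgmm} consists of four pieces, namely $\ln\cos Y$, $\ln\sin Y$, $(z-w)/(2c)$, and the mixed logarithm. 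A direct differentiation in $z$, combined with $\partial_z Y = (\gamma-1)/(4c\sqrt{\gamma})$, should verify that their sum reproduces the computed ratio $\partial_z\lambda_1/(\lambda_1-\lambda_2)$. The verification for $h_2$ is entirely analogous after swapping the roles of $w,z$ and the sign of $\sqrt{P'}$ in the relevant mixed logarithm.

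The main obstacle is purely the combinatorial bookkeeping in the $h_i$ verification: the ratio $\partial_z\lambda_1/(\lambda_1-\lambda_2)$ must be decomposed into a finite sum of terms whose antiderivatives in $z$ match exactly the four logarithmic pieces of the candidate $h_1$. The particular fractional exponents $(3\gamma-1)/(2\gamma-2)$ and $(\gamma-3)/(2\gamma-2)$ in front of $\ln\cos Y$ and $\ln\sin Y$ arise precisely from collecting powers of $\sin Y$ and $\cos Y$ after this decomposition, so getting those coefficients right is the delicate point. Once the decomposition is identified, the remainder of the proof is mechanical simplification using only the identities $\sin^2 Y + \cos^2 Y = 1$ and standard properties of exponentials.
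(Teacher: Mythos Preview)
Your proposal is correct and follows essentially the same route as the paper: the $\lambda_i$ formulas come from the velocity-addition factorisation $(c+\lambda_i)/(c-\lambda_i) = e^{H_i}$, and the $h_i$ formulas are verified by computing $\partial_z\lambda_1/(\lambda_1-\lambda_2)$ and matching it against the $z$-derivative of the candidate. The paper's Appendix~C organises the $h_i$ computation slightly differently---working in the $H_1,H_2$ variables, using the partial-fraction split $\tfrac{e^{H_2}+1}{(e^{H_1}+1)(e^{H_2}-e^{H_1})} = \tfrac{1}{e^{H_1}+1}+\tfrac{1}{e^{H_2}-e^{H_1}}$ together with $H_1+H_2 = 2(w+z)/c$ to isolate the logarithmic antiderivatives---but this is just a bookkeeping device for the same decomposition you describe.
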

$\lambda_1$ and $\lambda_2$ follow from direct calculations. The verification of formulas for $h_1$ and $h_2$ follows from very complex  calculations, which can be found in Appendix \ref{apphg}

			\subsection{Riccati-type ODE inequality}

			In the following proof,     we will focus only on $\xi$, and  $\zeta$ can be
    dealt with similarly.   Obviously,  by \eqref{REODE} and  \eqref{vbn}, the key point in the proof of Theorem 2.1 is to  give one good enough lower bound estimate on  $\e^{-h_1} 
\partial_z \lambda_{1}$, which will be achieved by some elaborate ODE argument.  For this purpose,  we first  observed  that 
 \begin{lemma}
 Denote $y \coloneqq \frac{k\rho^{(\g-1)/2}}{c}$.   One has 
\begin{equation}
    \label{integralBound}
    C_g^{-1}\mathcal{Y} \leq  e^{-h_1} \partial_w \lambda_{1} \leq  C_g \mathcal{Y},
\end{equation}
where 
\be
\mathcal{Y} \coloneqq \label{Yquantity} \left( \frac{k \rho^{(\gamma-1)/2}/c}{\sqrt{1 + \frac{k^2  \rho^{\gamma-1}}{c^2}}}\right)^{\frac{3-\gamma}{2\gamma-2}}\big(1+ \frac{k^2 \rho^{\gamma-1}}{c^2}\big)^{\frac{\gamma+1}{4\gamma-4}}=\big(\frac{y}{\sqrt{y^2+1}}\big)^\frac{3-\g}{2\g-2} (1+y^2)^{\frac{\g+1}{4\g-4}}.
\ee

 \end{lemma}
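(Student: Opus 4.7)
The plan is to exploit the closed-form formulas of Lemma \ref{lemma5.5} to compute $\partial_w\lambda_1$ and $\e^{-h_1}$ directly, then re-express everything in terms of the single variable $y = k\rho^{(\gamma-1)/2}/c$, which by \eqref{fg} equals $\tan Y$ (so that $\cos Y = (1+y^2)^{-1/2}$, $\sin Y = y(1+y^2)^{-1/2}$, $P' = c^2\gamma y^2$, and $\sqrt{P'} = c\sqrt{\gamma}y$). The main point, and the only non-routine element of the proof, is the \emph{structural cancellation} that collapses all non-integer powers of $y$ and $(1+y^2)$ coming from $\e^{-h_1}$ into exactly the factor $\mathcal{Y}$, leaving a residual prefactor controlled by Assumption \ref{cond}.

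For the differential factor, I would use $\lambda_1 = c(1 - 2/(\e^{H_1}+1))$ to obtain $\partial_w\lambda_1 = \tfrac{c^2-\lambda_1^2}{2c}\,\partial_w H_1$. Differentiating $H_1$ from \eqref{fg} (with $\partial_w Y = -(\gamma-1)/(4c\sqrt{\gamma})$) and applying the identity $2(1-\gamma\tan^2 Y) + (\gamma-1)\sec^2 Y = (\gamma+1)(1-\tan^2 Y)$ gives $\partial_w H_1 = (\gamma+1)(1-y^2)/[2c(1-\gamma y^2)]$. Combining this with the standard relation $c^2-\lambda_1^2 = (c^2-u^2)(c^2-P')/(c^2-u\sqrt{P'})^2$, the $(1-\gamma y^2)$ factors cancel and yield
\begin{equation*}
\partial_w\lambda_1 \;=\; \frac{(\gamma+1)(c^2-u^2)(1-y^2)}{4c^2\bigl(c-u\sqrt{\gamma}\,y\bigr)^2}.
\end{equation*}
Next, the trigonometric exponents in $h_1$ from \eqref{fgmm} satisfy $(3\gamma-1)+(\gamma-3) = 2(2\gamma-2)$, so after substituting $\cos Y$ and $\sin Y$ the product $(\cos Y)^{(3\gamma-1)/(2\gamma-2)}(\sin Y)^{(\gamma-3)/(2\gamma-2)}$ collapses to $y^{(\gamma-3)/(2\gamma-2)}(1+y^2)^{-1}$. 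Using $\e^{(w+z)/c} = (c+u)/(c-u)$, the bracket inside the logarithm in $h_1$ reduces to $2(c-u\sqrt{\gamma}\,y)/[(c-u)\sqrt{1+y^2}]$, hence
\begin{equation*}
\e^{-h_1} \;=\; y^{(3-\gamma)/(2\gamma-2)}(1+y^2)^{1/2}\,\e^{-(z-w)/(2c)}\cdot\frac{2(c-u\sqrt{\gamma}\,y)}{c-u}.
\end{equation*}

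Multiplying the two contributions and recognising that $\mathcal{Y} = y^{(3-\gamma)/(2\gamma-2)}(1+y^2)^{1/2}$ by \eqref{Yquantity}, one arrives at
\begin{equation*}
\e^{-h_1}\,\partial_w\lambda_1 \;=\; \mathcal{Y}\cdot\e^{-(z-w)/(2c)}\cdot\frac{(\gamma+1)(c+u)(1-y^2)}{2c^2\bigl(c-u\sqrt{\gamma}\,y\bigr)}.
\end{equation*}
It then remains to bound the residual prefactor above and below by positive constants depending only on $(z_0, w_0, \gamma, c)$. By \eqref{REODE}, $z$ and $w$ are preserved along their characteristics, so $z-w \in [0, \sup z_0 - \inf w_0]$, yielding two-sided bounds on $\e^{-(z-w)/(2c)}$. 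Since $|(w+z)/c|$ is bounded, $|u| \leq u^* < c$, so $c+u$ is bounded and positive. Finally, Assumption \ref{cond} gives $Y \leq (\sup z_0 - \inf w_0)(\gamma-1)/(4c\sqrt{\gamma}) < \arctan(1/\sqrt{\gamma})$, hence $\sqrt{\gamma}\,y < 1$ with a uniform gap, so both $1-y^2$ and $c - u\sqrt{\gamma}\,y$ stay uniformly positive. Note that the genuinely singular behaviour $y^{(3-\gamma)/(2\gamma-2)}$ for $1<\gamma<3$ (which blows up in the far-field vacuum limit $\rho \to 0$) has been absorbed entirely into $\mathcal{Y}$, so the prefactor remains bounded away from zero even at vacuum; this yields \eqref{integralBound}.
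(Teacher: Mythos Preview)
Your proof is correct and follows essentially the same approach as the paper: both compute the closed-form expression for $\e^{-h_1}\partial_w\lambda_1$ directly from Lemma~\ref{lemma5.5} and then bound the residual prefactor using the a priori estimates $|u|<c$, $\sqrt{\gamma}\,y<1$ from Lemma~\ref{lemma5.1}. The paper writes the final identity as
\[
\e^{-h_1}\partial_w\lambda_1 = \frac{c\,\e^{-\frac{2\sqrt{\gamma}\,\mathrm{Arctan}(y)}{\gamma-1}}(c+u)(\gamma+1)\,\mathcal{Y}\,(1-y^2)}{2c^2-2u\sqrt{P'(\rho)}},
\]
which, since $(z-w)/(2c)=\tfrac{2\sqrt{\gamma}}{\gamma-1}\mathrm{Arctan}(y)$ and $c^2-u\sqrt{P'}=c(c-u\sqrt{\gamma}\,y)$, is exactly your formula up to a harmless factor of $c^2$. (The slip is in your intermediate relation $c^2-\lambda_1^2=(c^2-u^2)(c^2-P')/(c^2-u\sqrt{P'})^2$, which is missing a $c^2$ in the numerator; this propagates to the extra $c^2$ in your denominator but is absorbed into $C_g$.)
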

 \begin{proof}
First, it follows from Lemma \ref{lemma5.5}  and direct calculations that 
\be \label{explicitformula1}
\e^{-h_1} 
\partial_w \lambda_{1}= \frac{c \hspace{.5mm} \e^{-\frac{2\sqrt{\gamma} \text{Arctan}(y)}{\g-1} }(c+u)(\g+1) \big(\frac{y}{\sqrt{y^2+1}}\big)^{\frac{3-\g}{2\g-2}} (1+y^2)^{\frac{\g+1}{4\g-4}}(1-y^2)        }{2c^2 - 2u\sqrt{P'(\rho)}}.        \ee 

Second,  according to Lemma \ref{lemma5.1} and its proof,  one can obtain that 
\be 
    \notag \begin{split} 
    c \hspace{.5mm} \mathrm{e}^{-\frac{\pi \sqrt{\gamma}}{\gamma-1}} \leq c \hspace{.5mm} \e^{-\frac{2\sqrt{\gamma} \text{Arctan}(y)}{\g-1} }\leq c, \quad
    1 -\g^{-1}\leq 1-y^2\leq  & 1,\quad C_g^{-1} \leq \frac{(c+u)(\gamma+1)}{2c^2 -2 u\sqrt{P'(\rho)}}\leq C_g,\end{split}
\ee
 which, along with \eqref{explicitformula1}, yields the desired estimates.


\end{proof}

Until now, from  \eqref{integralBound}, it is  clear that for  proving  Theorem \ref{isentropicGradientBlowUp}, one needs to  give a proper time-dependent lower bound on $\mathcal{Y}$. For $\gamma \geq  3$, observe that $\mathcal{Y} > 1$, and next we need to consider the case   for $1 <\gamma < 3$.
We first consider the 	upper bounds of $\xi$ and $\zeta$.	

\begin{lemma}
\label{Q1Q2lemma} 
				\be \xi(x,t) \leq Q_1 , \hspace{2mm} \zeta(x,t) \leq Q_2, \ee
where 
\be Q_1 := max\begin{Bmatrix} 0, \sup_x \xi(x,0) \end{Bmatrix}, \hspace{2mm} Q_2 := max \begin{Bmatrix} 0, \sup_x \zeta(x,0)\end{Bmatrix}.   \ee 			
\end{lemma}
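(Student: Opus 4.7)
The plan is to exploit the Riccati structure of the ODE satisfied by $\xi$ along the $1$-characteristic, together with the strict positivity of the coefficient $e^{-h_1}\partial_w\lambda_1$, and then apply the symmetric argument to $\zeta$. The integrated form in \eqref{vbn} is the main tool: fixing any $(t,x) \in [0,T]\times\mathbb{R}$ and tracing the $1$-characteristic back to its foot $(0, x_0^1)$ yields
\[ \frac{1}{\xi(t,x)} = \frac{1}{\xi(0,x_0^1)} + I(t), \qquad I(t) := \int_0^t \bigl(e^{-h_1}\partial_w\lambda_1\bigr)(\sigma,x^1(\sigma,x_0^1))\,\mathrm{d}\sigma, \]
and the integrand is strictly positive on $[0,T]\times\mathbb{R}$ thanks to the lower bound in \eqref{integralBound}, so $I(t)>0$ for every $t\in(0,T]$.

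Next I would split into cases according to the sign of $\xi(0, x_0^1)$. If $\xi(0, x_0^1) > 0$, then $1/\xi(t,x) \geq 1/\xi(0, x_0^1) > 0$, which forces $\xi(t,x) > 0$ and, after inverting the inequality, $0 < \xi(t,x) \leq \xi(0, x_0^1) \leq \sup_y \xi(0,y) \leq Q_1$. If $\xi(0, x_0^1) = 0$, uniqueness for the Riccati ODE $\partial_-\xi = -(e^{-h_1}\partial_w\lambda_1)\xi^2$ from \eqref{vbn1} forces $\xi \equiv 0$ along that characteristic, giving $\xi(t,x) = 0 \leq Q_1$. If $\xi(0, x_0^1) < 0$, the standing $C^1$ hypothesis on $[0,T]$ precludes blow-up of $\xi$, so the reciprocal $1/\xi(t,x) = 1/\xi(0,x_0^1) + I(t)$ can never pass through zero within the slab; hence it remains strictly negative throughout and $\xi(t,x) < 0 \leq Q_1$. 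In all three subcases we obtain $\xi(t,x)\leq Q_1$.

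The bound on $\zeta$ follows by the identical argument, applied to the second identity of \eqref{vbn} along a $2$-characteristic, once one has the analogue of \eqref{integralBound} for $e^{-h_2}\partial_z\lambda_2$; that analogue is obtained by a computation symmetric to the derivation of \eqref{integralBound} itself, using the explicit formula for $h_2$ in Lemma~\ref{lemma5.5}. I do not anticipate a serious obstacle here: the whole argument reduces to a sign analysis of the integrated Riccati identity, and the non-negativity floor built into the definition $Q_1 = \max\{0,\sup_x \xi(0,x)\}$ absorbs the negative-initial-data case automatically. The only subtlety worth flagging is the assertion that $\xi$ cannot blow up inside the existence slab, which is immediate from the $C^1$ regularity of $(\rho,u)$ on $[0,T]\times\mathbb{R}$ guaranteeing that $\alpha=\partial_x z$, and hence $\xi=e^{h_1}\alpha$, is finite at every interior point of the slab.
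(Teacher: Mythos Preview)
Your proof is correct and rests on the same key fact as the paper's: the coefficient $e^{-h_1}\partial_w\lambda_1$ in the Riccati ODE \eqref{vbn1} is strictly positive. The paper's execution is more streamlined---rather than the case split via the integrated form \eqref{vbn}, it simply reads off $\partial_-\xi=-(e^{-h_1}\partial_w\lambda_1)\xi^2\le 0$ from \eqref{vbn1}, so $\xi$ is non-increasing along each $1$-characteristic and $\xi(t,x)\le\xi(0,x_0^1)\le Q_1$ in one stroke; it also verifies the sign of $\partial_w\lambda_1$ by an explicit computation (showing $\text{Cos}(2Y)>0$) rather than invoking \eqref{integralBound}, though your shortcut is legitimate since that bound precedes this lemma.
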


\begin{proof} 
We only consider $\xi$. First,  it follows from Lemma \ref{lemma5.5} and direct calculations that  
	\be \label{lambda1z}\partial_w \lambda_{1}= \frac{ \e^{\frac{w+z}{c}}\hspace{.5mm}(1+\g)\hspace{.5mm}\text{Cos}(2Y)\hspace{.5mm}\text{Sec}(Y)^2}{\big(1+\e^{\frac{w+z}{c}} -(\e^{\frac{w+z}{c}}-1)\hspace{.5mm}\sqrt{\g}\hspace{.5mm}\text{\text{Tan}}(Y)\big)^2}.
				\ee

Second, we show that  $\text{Cos}(2Y) > 0$, which, can be obtained  from Lemma \ref{lemma5.1},
\[Y= \frac{(z-w)(\g-1)}{4 \hspace{.5mm} c \hspace{.5mm} \sqrt{\g}}= \text{Arctan}\Big(\frac{k \rho^{(\g-1)/2}}{c}\Big)\quad \text{and} \quad  \text{Cos}(2\text{Arctan}(x)) = \frac{1-x^2}{1+x^2}.    \]
Thus,  the desired  conclusion follows  from the  Riccati ODEs established in Lemma \ref{lemma5.1}.	
\end{proof}

Now we are ready to  develop an    ODE inequality for $y$.			
\begin{lemma}\label{lemma5.7}
	Assume $1< \gamma < 3$.	One has 		
 \be \label{yode} \partial_-y \geq -C_{g}\hspace{.5mm}  y^{\frac{\g+1}{2\g-2}}\hspace{.5mm} (1+y^2)^{3/2}.\ee

\end{lemma}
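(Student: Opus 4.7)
The starting point is to convert $\partial_- y$ into something expressible through the Riemann invariant framework. Since $y = \tan Y$ with $Y = (z-w)(\gamma-1)/(4c\sqrt{\gamma})$ by Lemma \ref{lemma5.5}, and $\partial_- w = 0$ by \eqref{REODE}, the chain rule gives
$$\partial_- y \;=\; (1+y^2)\,\partial_- Y \;=\; \frac{\gamma-1}{4c\sqrt{\gamma}}\,(1+y^2)\,\partial_- z.$$
Then I would invoke the companion equation $\partial_+ z = 0$ to convert $\partial_- z$ into a spatial derivative: $\partial_- z = (\lambda_1-\lambda_2)\partial_x z = -(\lambda_2-\lambda_1)\,e^{-h_1}\xi$, where the last step uses $\alpha = e^{-h_1}\xi$ from \eqref{zwode}. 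Combining,
$$\partial_- y \;=\; -\frac{\gamma-1}{4c\sqrt{\gamma}}\,(1+y^2)\,(\lambda_2-\lambda_1)\,e^{-h_1}\,\xi.$$

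Because $\lambda_2 > \lambda_1$, the prefactor of $\xi$ above is nonpositive, so when $\xi \leq 0$ the right-hand side is already $\geq 0$ and \eqref{yode} is trivial. It suffices to treat the case $\xi > 0$, and in that regime Lemma \ref{Q1Q2lemma} supplies the uniform bound $\xi \leq Q_1$. The entire problem therefore reduces to controlling the deterministic quantity $(1+y^2)(\lambda_2-\lambda_1)e^{-h_1}$ from above by $C_g\, y^{(\gamma+1)/(2\gamma-2)}(1+y^2)^{3/2}$.

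For this remaining estimate, I would proceed as follows. From Lemma \ref{lemma5.1}, $\sqrt{P'} = c\sqrt{\gamma}\,y$ and $y < 1/\sqrt{\gamma}$, while $|u|$ is uniformly strictly less than $c$ because $w,z$ are constant along characteristics with initial bounds from Assumption \ref{cond}. The explicit identity $\lambda_2-\lambda_1 = 2\sqrt{P'}(1-u^2/c^2)/(1-u^2P'/c^4)$ then gives $\lambda_2-\lambda_1 \leq C_g\, y$. For $e^{-h_1}$, I would substitute $\sin Y = y/\sqrt{1+y^2}$ and $\cos Y = 1/\sqrt{1+y^2}$ into the formula from Lemma \ref{lemma5.5}; the $(1+y^2)$ exponents combine as $\tfrac{3\gamma-1}{4\gamma-4} + \tfrac{\gamma-3}{4\gamma-4} = 1$, so that
$$e^{-h_1} \;\leq\; C_g\, y^{(3-\gamma)/(2\gamma-2)}\,(1+y^2),$$
where the residual factor $e^{-(z-w)/(2c)}$ and the logarithmic correction in $h_1$ contribute only $O(1)$ multiplicative constants thanks to the uniform bounds on $w+z$. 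Multiplying and using $1 + (3-\gamma)/(2\gamma-2) = (\gamma+1)/(2\gamma-2)$ yields $(1+y^2)(\lambda_2-\lambda_1)e^{-h_1} \leq C_g\, y^{(\gamma+1)/(2\gamma-2)}(1+y^2)^2$, and since $1+y^2$ is uniformly bounded one absorbs a factor $(1+y^2)^{1/2}$ into the generic constant to obtain the required $(1+y^2)^{3/2}$.

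The main obstacle is the algebraic bookkeeping inside the bound on $e^{-h_1}$. For $1 < \gamma < 3$ the fractional powers $\frac{3\gamma-1}{2\gamma-2}$ and $\frac{\gamma-3}{2\gamma-2}$ in Lemma \ref{lemma5.5} have opposite signs, so one must check carefully that, after substituting $\sin Y$ and $\cos Y$ in terms of $y$ and collecting the $(1+y^2)^{-1/2}$ contributions, precisely the clean power $y^{(3-\gamma)/(2\gamma-2)}(1+y^2)^1$ emerges, with the logarithmic correction term in $h_1$ harmless under the uniform boundedness of $(w,z)$. Everything else is an elementary combination of Lemmas \ref{lemma5.1}, \ref{lemma5.5} and \ref{Q1Q2lemma}.
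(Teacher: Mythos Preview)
Your approach is essentially the paper's: write $\partial_-(z-w)=(\lambda_1-\lambda_2)\partial_x z$, replace $\partial_x z$ by a weighted gradient variable that carries a Riccati-based uniform upper bound, and then power-count the remaining deterministic factor in $y$. The paper evaluates $(\lambda_2-\lambda_1)e^{-h_2}$ as a single explicit expression (equation \eqref{5.31}), whereas you estimate $\lambda_2-\lambda_1$ and the weight separately; the arithmetic is the same, and your bookkeeping on the $\sin Y$, $\cos Y$ exponents is correct.

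One point needs correction. The Riccati ODE $\partial_-(\cdot)=-(e^{-h_1}\partial_w\lambda_1)(\cdot)^2$ in Lemma~\ref{lemma5.1} is obtained by differentiating $\partial_- w=0$ in $x$ and using $\partial_z h_1=\partial_z\lambda_1/(\lambda_1-\lambda_2)$ to absorb the cross term; the quantity it governs is therefore $e^{h_1}\partial_x w$, not $e^{h_1}\partial_x z$. (The assignments $\alpha=\partial_x z$, $\beta=\partial_x w$ just above \eqref{zwode} appear to be a misprint; note that the paper's own argument writes $\partial_x z = e^{-h_2}\zeta$ in \eqref{wwzz}.) Your step ``$\partial_x z=e^{-h_1}\xi$ with $\xi\le Q_1$'' should therefore read ``$\partial_x z=e^{-h_2}\zeta$ with $\zeta\le Q_2$''. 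Nothing else changes: by Lemma~\ref{lemma5.5}, $h_2$ carries the same factor $\frac{\gamma-3}{2\gamma-2}\ln(\sin Y)$ as $h_1$, so your estimate $e^{-h}\le C_g\,y^{(3-\gamma)/(2\gamma-2)}(1+y^2)$ holds verbatim for $e^{-h_2}$, and the bound closes exactly as you wrote.
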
  

\begin{proof} 
First,  we rewrite \eqref{REODE} in the form of  the difference $(w-z)$ as:
			\be  \label{wwzz} \partial_-(z-w)  = \partial_t (z-w) +  \lambda_1 \partial_x (z-w) = (\lambda_1 - \lambda_2)\partial_x z= \frac{\lambda_1- \lambda_2}{\mathrm{e}^{h_2}}\zeta. \ee

Second,  it follows from \eqref{P}-\eqref{lambda12}  and direct  computations that 
\be \label{5.31} \frac{ \lambda_2 - \lambda_1}{\e^{h_2}}= \hspace{.5mm} \frac{4 c \hspace{.5mm} \e^{ \frac{2\hspace{.5mm} \sqrt{\gamma}\hspace{.5mm} \text{Arctan}(y) }{\gamma-1}    } \hspace{.5mm}  \hspace{.5mm} (c+u)\hspace{.5mm}  c\hspace{.5mm} \sqrt{\gamma}\hspace{.5mm}  y\hspace{.5mm}  \big(\frac{y}{\sqrt{y^2+1}}\big)^\frac{3-\gamma}{2\gamma-2}  (1+y^2)^\frac{\gamma+1}{4\gamma-4}                  }{           c^2- u \sqrt{P'(\rho)}}.                 \ee
			
Notice that, according to Lemma \ref{lemma5.1} and its proof,   one has
\begin{equation}\label{cc1}
\begin{split}
C^{-1}_g\leq &  \e^{ \frac{2\hspace{.5mm} \sqrt{\gamma}\hspace{.5mm} \text{Arctan}(y) }{\gamma-1}    }\leq C_g, \quad \text{and} \quad C^{-1}_g\leq  \frac{c+u}{c^2-u \sqrt{P'(\rho)}}\leq C_g,	
\end{split}		
\end{equation}
 which, along with \eqref{wwzz}-\eqref{5.31}, yields that  
\be \label{5.34} \partial_-(z-w) \geq - C_{g}\hspace{.5mm}   y\hspace{.5mm}  \big(\frac{y}{\sqrt{y^2+1}}\big)^\frac{3-\gamma}{2\gamma-2}  (1+y^2)^\frac{\gamma+1}{4\gamma-4} = - C_{g}\hspace{.5mm}  y^{\frac{\gamma+1}{2\gamma-2}}\hspace{.5mm}  (1+y^2)^{\frac{1}{2}}.   \ee

Due to 
$$z-w= \frac{4 \hspace{.5mm}  c \hspace{.5mm}  \sqrt{\gamma}}{\gamma-1}\text{Arctan}(y),\quad \text{and} \quad \partial_-(z-w)= C_{g} \frac{\partial_-y}{y^2+1},   $$
then one can   rewrite \eqref{5.34} as  \eqref{yode}.
\end{proof}

	Finally,  it turns out that, one can obtain the following  Riccati-type inequality for $\mathcal{Y}$:	
\begin{lemma}\label{lemma5.8}
Assume $1< \gamma < 3$. One has 
	\begin{equation}\label{day}\partial_-\mathcal{Y} \geq - C_{g}\hspace{.5mm} \mathcal{Y}^2.
	\end{equation}
	In particular, along  the characteristic $x^1(t,x_*)$ starting from $(0,x_*)$, one has
        \[ \mathcal{Y}(t, x^1(t , x_*)) \geq \frac{\mathcal{Y}(0,x_*)}{1+ C_g \mathcal{Y}(0,x_*) t}. \]
\end{lemma}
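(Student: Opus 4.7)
The plan is to obtain \eqref{day} by chain rule, differentiating $\mathcal{Y}$ as a function of $y$ alone and plugging in the lower bound on $\partial_- y$ supplied by Lemma 3.4, and then to integrate the resulting Riccati-type differential inequality along the $1$-characteristic to get the pointwise bound.

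\medskip

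First I would simplify the expression for $\mathcal{Y}$. Combining the two factors in \eqref{Yquantity} and adding exponents over $1+y^2$ gives
\begin{equation*}
\mathcal{Y}=y^{\frac{3-\gamma}{2\gamma-2}}(1+y^2)^{\frac{1}{2}}.
\end{equation*}
A direct logarithmic differentiation then yields
\begin{equation*}
\frac{d\mathcal{Y}}{dy}=\frac{\mathcal{Y}\bigl[(3-\gamma)+(\gamma+1)y^2\bigr]}{2(\gamma-1)\,y\,(1+y^2)},
\end{equation*}
which is strictly positive for $1<\gamma<3$ and $y>0$. Hence, applying Lemma 3.4 and the fact that $\partial_- \mathcal{Y}=(d\mathcal{Y}/dy)\,\partial_- y$ gives
\begin{equation*}
\partial_-\mathcal{Y}\geq -C_g\,y^{\frac{\gamma+1}{2\gamma-2}}(1+y^2)^{3/2}\cdot\frac{\mathcal{Y}\bigl[(3-\gamma)+(\gamma+1)y^2\bigr]}{2(\gamma-1)\,y\,(1+y^2)}.
\end{equation*}

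\medskip

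Next I would reduce the right-hand side to a multiple of $\mathcal{Y}^2$. Using $\frac{\gamma+1}{2\gamma-2}-1=\frac{3-\gamma}{2\gamma-2}$, the factor $y^{\frac{\gamma+1}{2\gamma-2}}/y$ becomes $y^{\frac{3-\gamma}{2\gamma-2}}$, and the remaining power of $(1+y^2)$ is $1/2$. Comparing with $\mathcal{Y}=y^{\frac{3-\gamma}{2\gamma-2}}(1+y^2)^{1/2}$, the estimate rearranges to
\begin{equation*}
\partial_-\mathcal{Y}\geq -\frac{C_g\bigl[(3-\gamma)+(\gamma+1)y^2\bigr]}{2(\gamma-1)}\,\mathcal{Y}^2.
\end{equation*}
The key observation now is that the bracket is uniformly bounded: from Lemma 3.1 we have $\sqrt{P'(\rho)}=k\sqrt{\gamma}\,\rho^{(\gamma-1)/2}<c$, so $y<1/\sqrt{\gamma}$ and hence $(3-\gamma)+(\gamma+1)y^2\leq 4$. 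Absorbing the harmless constant into $C_g$ yields \eqref{day}.

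\medskip

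The final step is to integrate the Riccati inequality along the $1$-characteristic. Setting $f(t):=\mathcal{Y}(t,x^1(t,x_*))$, the estimate \eqref{day} reads $f'(t)\geq -C_g f(t)^2$. Since $f$ stays positive (as $y>0$ by Lemma 3.1), dividing by $f^2$ gives
\begin{equation*}
\frac{d}{dt}\!\left(\frac{1}{f(t)}\right)=-\frac{f'(t)}{f(t)^2}\leq C_g,
\end{equation*}
and integrating from $0$ to $t$ produces $1/f(t)\leq 1/f(0)+C_g t$, which is equivalent to the claimed lower bound. The main (though mild) obstacle I anticipate is purely bookkeeping: ensuring that the $y$-dependent prefactor can indeed be absorbed into a time-independent constant $C_g$, which is why the $y<1/\sqrt{\gamma}$ bound from Lemma 3.1 is crucial — without it, the bracket could blow up and the Riccati structure would be lost.
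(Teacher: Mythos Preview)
Your proof is correct and follows essentially the same approach as the paper: compute $d\mathcal{Y}/dy$, feed in the lower bound \eqref{yode} from Lemma~3.4, recognize the result as $-C_g\mathcal{Y}^2\bigl[(3-\gamma)+(\gamma+1)y^2\bigr]$ up to constants, and bound the bracket using $y<\gamma^{-1/2}$. The only cosmetic difference is that you simplify $\mathcal{Y}$ to $y^{\frac{3-\gamma}{2\gamma-2}}(1+y^2)^{1/2}$ at the outset, which makes the identification with $\mathcal{Y}^2$ more transparent than in the paper's intermediate expression $y^{\frac{6-2\gamma}{2\gamma-2}}(1+y^2)$.
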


\begin{proof} 
First, it follows from \eqref{Yquantity},  \eqref{yode} and  direct calculations that 
\be\partial_- \mathcal{Y}\geq -C_{g}\hspace{.5mm}  y^\frac{6-2\g}{2\g -2} \hspace{.5mm} (1+y^2)\hspace{.5mm} \big((\g+1)y^2 +(3-\g)\big),  \ee 
which  can equivalently be rewritten as
				\be \label{finalode}\partial_-\mathcal{Y} \geq - C_{g}\hspace{.5mm}\mathcal{Y}^2 \big((\g+1)y^2 +(3-\g)\big). \ee 
				
			Second, it follows from the fact   $\sqrt{P'(\rho)}< c$  that 
		$	y< \g^{-\frac{1}{2}}$.
			 Therefore \[(\g+1) y^2 + (3-\g) \leq \frac{\gamma+1}{\gamma} + (3-\g),\]
			 which, along with \eqref{finalode}, implies \eqref{day}.

At last, by integrating along the characteristic $x^1(t , x_*)$, one obtains the desired result.	

\end{proof}

\subsection{Verification of Lax's theory in  $2\times 2$ system: Proof of Theorem \ref{isentropicGradientBlowUp}}		
Without loss of generality,  assume that there exists a point $x_*$ such that $\xi(0,x_*)<0$. We shall prove  that a singularity forms in finite time by contradiction. Indeed, assume otherwise,  $T^* = \infty$. It follows from  Lemma \ref{lemma5.1} that 
\begin{equation} \label{xieq1}
    \xi(t, x^1(t , x_*)) = \frac{\xi(0,x_*)}{1+\xi(0,x_*)\int_0^{t} \left( \e^{-h_1}\partial_w \lambda_{1} \right)(\sigma, x^1(\sigma, x_*)) \hspace{1mm}\text{d}\sigma   }.
\end{equation}
There are two cases to consider. The case $\gamma \geq 3$ and the case $1 < \gamma < 3$.

Case 1: $\gamma \geq 3$.
By \eqref{integralBound} and the fact that $\mathcal{Y} > 1$, one obtains 
\begin{equation*}
    \int_0^{t} \left( \e^{-h_1}\partial_w \lambda_{1} \right)(\sigma, x^1(\sigma, x_*)) \hspace{1mm}\text{d}\sigma  \geq C_g^{-1} t.
\end{equation*}

Case 2: $1 < \gamma < 3$.
By Lemma \ref{lemma5.8} and \eqref{integralBound}, one obtains

\[\left( \e^{-h_1}\partial_w \lambda_{1} \right) (\sigma, x^1(\sigma, x_*)) \geq C_g^{-1} \frac{\mathcal{Y}(0,x_*)}{1+ C_g \mathcal{Y}(0,x_*) \sigma}, \]
which  yields that 
\begin{equation*}
    \int_0^{t} \left( \e^{-h_1}\partial_w \lambda_{1} \right)(\sigma, x^1(\sigma, x_*)) \hspace{1mm}\text{d}\sigma  \geq C \log(1+C_g \mathcal{Y}(0,x_*) t),
\end{equation*}
for a constant $C>0$ depending only on the $C^1$ norm of the initial data and $\mathcal{Y}(0,x_*)$. 

In both cases, the contradiction now follows from the fact that $\xi(0,x_*)<0$ and the fact  \[\int_0^{t} \left( \e^{-h_1}\partial_w \lambda_{1} \right)(\sigma, x^1(\sigma, x_*)) \hspace{1mm}\text{d}\sigma \rightarrow \infty \quad \text{as} \quad t \rightarrow \infty. \] 

Second, for the other direction, assume that there exists no compression in the initial data. In that case, the boundedness of the Riemann invariants follows as usual through their constancy along characteristics, and moreover, equation \eqref{xieq1} implies that the $L^{\infty}$ norm of  $\partial_x z$ remains finite for all time. Hence, at every finite time $T$, one can invoke the local existence result, as the $C^1$ norm remains bounded. The solution, therefore, exists for all time and the result follows.

\section{Singularity formation in $3\times 3$ system}
This section will be devoted to proving Theorem 2.2. It is well-known that  under Assumptions \ref{REAssumpInitialData}-\ref{AssumptionSoundSpeed<c}, 
the local-in-time existence of the unique   $C^1$ solution $(\rho, u,S)$   to the Cauchy problem \eqref{releul1D} with \eqref{non-initial}   can be established  by the classical characteristic  method; c.f. Theorem 4.1 on p. 66 of \cite{ta1985boundary}, or  see also \cites{Daformos, evans, Lidaqian}.

First of all, we will derive the equations satisfied by the Riemann variables  $(z,w)$ from the  original system \eqref{releul1D}. It follows from direct calculations  that the matrix $A(\rho, u,S)$ given by \eqref{matrix} has right eigenvectors
\begin{equation*}
    r_1 = \begin{bmatrix} -\partial_S P \\ 0 \\ \partial_\rho P \end{bmatrix}, \quad 
    r_2 = \begin{bmatrix} P + \rho c^2 \\  \sqrt{\partial_\rho P}(c^2-u^2) \\ 0 \end{bmatrix}, \quad 
    r_3 = \begin{bmatrix} -(P + \rho c^2) \\  \sqrt{\partial_\rho P}(c^2-u^2) \\ 0 \end{bmatrix},
\end{equation*}
and left eigenvectors
\begin{equation*}
    l_1 = \begin{bmatrix} 0 \\ 0 \\ 1 \end{bmatrix}, \quad 
    l_2 = \begin{bmatrix} \sqrt{\partial_\rho P}(c^2-u^2) \\ P + \rho c^2 \\ (c^2-u^2)\frac{\partial_S P}{\sqrt{\partial_\rho P}} \end{bmatrix}, \quad 
    l_3 = \begin{bmatrix} -\sqrt{\partial_\rho P}(c^2-u^2) \\ P + \rho c^2 \\ -(c^2-u^2)\frac{\partial_S P}{\sqrt{\partial_\rho P}} \end{bmatrix}.
\end{equation*}
Moreover, the corresponding Riemann variables  satisfy  the following properties:
\begin{equation*}
\begin{split}
    &\nabla w \cdot r_2 = 0 \implies \nabla w = a l_1 + b l_3;\\
     & \nabla z \cdot r_3 = 0 \implies \nabla z = -a l_1 + b l_2,
    \end{split}
\end{equation*}
where $\nabla w=( \partial_\rho w,\partial_{u} w,\partial_S w)^\top$, $\nabla z=( \partial_\rho z,\partial_{u} z,\partial_S z)^\top$, $a$ is defined in \eqref{tidleAlphaBeta} and 
\begin{equation*}
    b = \frac{c^2}{(c^2 - u^2)(P+\rho c^2)}.
\end{equation*}
Then via direct  calculations, one can obtain that 
\begin{equation*}
\begin{split}
&\partial_t w + \lambda_3 \partial_x w = a (\lambda_3 - \lambda_1) l_1 \cdot \partial_x v,\\[4pt]
&\partial_t w + \lambda_1 \partial_x w = b (\lambda_1 - \lambda_3) l_3 \cdot \partial_x v,\\[4pt]
&\partial_t z + \lambda_2 \partial_x z = a (\lambda_1 - \lambda_2) l_1 \cdot \partial_x v,\\[4pt]
&\partial_t z + \lambda_1 \partial_x z = b (\lambda_1 - \lambda_2) l_2 \cdot \partial_x v,
\end{split}
\end{equation*}
which, along with  $l_1 \cdot \partial_x v = \partial_x S$ and  $\eqref{releul1D}_3$, yields that 
\begin{equation}
    \label{riPDEs}
    \begin{cases}
    \displaystyle
    \partial_t w + \lambda_3 \partial_x w = a (\partial_t S + \lambda_3 \partial_x S), \\[3pt]
    \displaystyle
    \partial_t z + \lambda_2 \partial_x z = -a (\partial_t S + \lambda_2 \partial_x S).
    \end{cases}
\end{equation}

Second, for simplicity, we  define some functions  and notations. Denote 
\begin{equation}
    \label{F}
    \displaystyle F(\rho,S) = \frac{z-w}{2} = \int_{0}^{\rho} \frac{c^2 \sqrt{\partial_\sigma P(\sigma,S)}}{c^2 \sigma + P(\sigma,S)} \text{d} \sigma,
\end{equation}
then  $a(\rho,S)$ can be expressed in terms of $F(\rho,S)$ and $P(\rho,S)$ as
\begin{equation}
    \label{a}
    a(\rho,S) = - \partial_S F + \frac{ \partial_S P}{\partial_\rho P} \partial_\rho F.
\end{equation}
The characteristics $x^i(t,x_0^i)$ can be defined by 
\begin{equation}
    \label{characteristics}
    \frac{dx^1}{dt} = \lambda_1, \quad \frac{dx^2}{dt} = \lambda_2, \quad \frac{dx^3}{dt} = \lambda_3,
\end{equation}
with $x_0^i = x^i(0,x_0^i)$ $(i=1,2,3)$. For simplicity, $x^i(t,x_0^i)$ will be abbreviated as $x^i(t)$.

At last, we  give the   outline of the following   proof:
\begin{enumerate}
    \item in \S 4.1, we prove   that the Riemann variables $(z,w)$ are uniformly bounded;
    \item  in \S 4.2,  we convert  the equations \eqref{riPDEs}  for the Riemann variables $(z,w)$ into some  decoupled  ODEs;
    \item  in \S 4.3,  we  show that  all the coefficients of   the decoupled  ODEs obtained above  have  uniform  bounds;
    \item  in \S 4.4,  for the case  $1 < \gamma < 3$, we  find a  sharp enough time-dependent lower bound   for  the mass-energy density $\rho$.
\end{enumerate}
Based on the above four steps, one can prove the desired conclusion on the  singularity formation.
Since now, $T$ denote  the life span  of the $C^1$ solution to  the Cauchy problem \eqref{releul1D} with \eqref{non-initial}. All the conclusions  are proved under the assumption that $t < T$.

\subsection{Uniform bounds on the Riemann variables}

First, one has 
\begin{proposition}\label{EntropyBound}
\begin{equation*}
    |S(t,x)| \leq \sup_{x \in \mathbb{R}} |S_0(x)| \quad \text{for} \quad (t,x)\in [0,T)\times \mathbb{R}.
\end{equation*}
\end{proposition}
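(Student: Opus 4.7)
The plan is to exploit the observation (recorded in \eqref{3tezheng}) that the first eigenvalue of the system is $\lambda_1 = u$, so that the entropy equation $\eqref{releul1D}_3$ is exactly the transport equation along the $1$-characteristic:
\begin{equation*}
\partial_1 S = \partial_t S + \lambda_1 \partial_x S = \partial_t S + u \partial_x S = 0.
\end{equation*}
In other words, $S$ is conserved along the integral curves of $dx^1/dt = u$ defined in \eqref{characteristics}.

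First I would fix an arbitrary $(t,x)\in[0,T)\times\mathbb{R}$ and trace the $1$-characteristic $s\mapsto x^1(s)$ passing through $(t,x)$ backwards in time. Since $(\rho,u,S)$ is a $C^1$ solution on $[0,T)\times\mathbb{R}$, the coefficient $u$ of this ODE is $C^1$, and (as will be confirmed in the forthcoming sound-speed estimate, cf.\ the remark following Theorem \ref{GradientBlowUp}) one has $|u|<c$, so the ODE has a unique global solution on $[0,t]$ and reaches $t=0$ at a well-defined foot $x_0^1\in\mathbb{R}$.

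Next, integrating $\partial_1 S=0$ along this characteristic yields $S(t,x)=S(0,x_0^1)=S_0(x_0^1)$. Taking absolute values and then the supremum over all possible foot points gives
\begin{equation*}
|S(t,x)| = |S_0(x_0^1)| \leq \sup_{y\in\mathbb{R}} |S_0(y)|,
\end{equation*}
and since $(t,x)$ was arbitrary the conclusion follows.

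There is no real obstacle here: the statement is an immediate consequence of the fact that in the (1+1)-D non-isentropic relativistic Euler system the entropy equation decouples as a pure transport equation along the particle-path characteristic, combined with the a priori bound $|u|<c$ that guarantees the characteristics are well defined on the slab of existence. The only mildly careful point is to make the backward-in-time solvability of the characteristic ODE explicit, but this is standard given the regularity of $u$ assumed in Definition \ref{ 2.1-non}.
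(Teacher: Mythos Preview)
Your argument is correct and is exactly the ``obvious'' proof the paper has in mind: the paper itself omits the proof entirely, writing only ``The proof is obvious, and then we omit it here.'' Your transport-along-characteristics argument is the standard one, and the only subtlety you flag---that the $1$-characteristic can be traced all the way back to $t=0$---is indeed handled by the bound $|u|<c$ (which keeps the characteristic speed finite) together with the $C^1$ regularity of $u$.
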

The proof is obvious, and then  we omit it here.  

Second, define the time $T^*$ by
\begin{equation*}
    T^* \coloneqq \sup \{t \in [0,T) \mid \rho >0 \ \ \text{pointwisely}, \ \ |u| < c \  \ \text{and} \ \ \sqrt{\partial_\rho P} < c \ \ \text{uniformly} \}.
\end{equation*}
Here, for the  function $f(t,x)$,  at the time $t_0$,  $|f(t_0,x)| < c$ uniformly   means: there exists one constant  $ C_0 > 0$ independent of $(t,x) \in [0,t_0]\times \mathbb{R}$ such that 
$$c - |f(t,x)| > C_0 \quad \text{for} \quad (t,x) \in [0,t_0]\times \mathbb{R}. $$
It follows from  Assumptions \ref{REAssumpInitialData}-\ref{AssumptionSoundSpeed<c} and the  continuity of the solution that  $T^* >0$. Next we want to  prove that $(w,z)$ are uniformly bounded up to the  time $T$. We will first  show that  such  uniform bounds indeed exist for $t \in [0,T^*)$, and then prove  $T^* = T$.

\begin{proposition}\label{EigvenvalueIneq}
When $t\in [0,T^*)$, one has
\begin{equation}
    \label{eigenvalue_ineq}
    \lambda_3(t,x) < \lambda_1(t,x) < \lambda_2(t,x) \quad \text{for} \quad x\in \mathbb{R},
\end{equation}
and any two characteristics $x^i(t)$ and $x^j(t)$  $(i \neq j)$ intersect each other at most once.
\end{proposition}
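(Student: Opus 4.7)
The plan decomposes into two claims, treated separately.

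For the eigenvalue ordering $\lambda_3 < \lambda_1 < \lambda_2$, I would compute the differences directly from the formulas in \eqref{3tezheng} by placing the two eigenvalues over a common denominator. A short algebraic manipulation yields
\begin{equation*}
\lambda_2 - \lambda_1 = \frac{(c^2 - u^2)\sqrt{\partial_\rho P}}{c^2 + u\sqrt{\partial_\rho P}}, \qquad \lambda_1 - \lambda_3 = \frac{(c^2 - u^2)\sqrt{\partial_\rho P}}{c^2 - u\sqrt{\partial_\rho P}}.
\end{equation*}
Both fractions are manifestly positive on $[0, T^*) \times \mathbb{R}$. The numerators are positive because $|u| < c$ and $\partial_\rho P > 0$ whenever $\rho > 0$ (the latter being standard for the pressure law \eqref{Prho}, cf.\ Proposition \ref{AProp_OrderOfPetc} in Appendix A). The denominators are bounded below by a positive constant because $|u\sqrt{\partial_\rho P}| < c^2$, which in turn follows from the strict inequalities $|u| < c$ and $\sqrt{\partial_\rho P} < c$ holding uniformly on $[0, T^*) \times \mathbb{R}$ by the very definition of $T^*$.

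For the non-intersection claim, the plan is a classical ``first reintersection'' argument powered by the strict ordering established above. Fix $i \neq j$; by the first part we may assume, reindexing if necessary, that $\lambda_j > \lambda_i$ uniformly on $[0, T^*) \times \mathbb{R}$. Suppose for contradiction that the characteristics $x^i(t)$ and $x^j(t)$ coincide at two times $t_1 < t_2$ in $[0, T^*)$; say they meet at the common spatial point $x_0$ at $t_1$. From \eqref{characteristics},
\begin{equation*}
\frac{d}{dt}\bigl(x^j - x^i\bigr)\Big|_{t=t_1} = \lambda_j(t_1, x_0) - \lambda_i(t_1, x_0) > 0,
\end{equation*}
so $x^j(t) > x^i(t)$ on a right neighborhood of $t_1$. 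Let $t_* = \inf\{t > t_1 : x^j(t) = x^i(t)\} \leq t_2$. By continuity $x^j(t_*) = x^i(t_*)$, and by minimality of $t_*$ the inequality $x^j(t) > x^i(t)$ persists on $(t_1, t_*)$. But repeating the derivative computation at $t_*$ gives $(d/dt)(x^j - x^i)(t_*) > 0$, hence $x^j(t) < x^i(t)$ on a left neighborhood of $t_*$, which directly contradicts the sign on $(t_1, t_*)$.

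The main obstacle is very mild: essentially bookkeeping to confirm that on the open interval $[0, T^*)$ the eigenvalue separation is strict (not merely nonnegative), and that $\partial_\rho P$ does not vanish along the flow. The first is guaranteed by the uniform bound built into the definition of $T^*$ and the $C^1$ regularity of the solution, and the second is immediate from the pressure law together with the pointwise positivity of $\rho$ on $[0, T^*)$. No delicate estimates are needed; both claims reduce to sign analysis of the explicit differences above and the standard transversality criterion for ODE trajectories.
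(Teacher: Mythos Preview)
Your proof is correct and follows essentially the same approach as the paper: the eigenvalue ordering is read off directly from the explicit formulas \eqref{3tezheng} under the conditions defining $T^*$, and the non-reintersection of characteristics is a standard transversality argument driven by that strict separation. The paper's version of the latter differs only cosmetically (it extracts a sign flip of the derivative of $x^i - x^j$ at two consecutive zeros to contradict the consistent eigenvalue ordering, rather than using a first-reintersection time as you do), but the content is identical.
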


\begin{proof}
First, for $t<T^*$,  \eqref{eigenvalue_ineq} follows directly  from \eqref{3tezheng}.

Second, provided $i\neq j$,   define the function $f(t)$ by 
\begin{equation*}
    f(t) \coloneqq x^i(t) - x^j(t).
\end{equation*}
One  observes that if for some $t_* \in [0,T^*)$, $f(t_*) = 0$, then $f'(t_*) \neq 0$. Actually, note that 
$f(t_*) = 0$ implies that 
$$x^i (t_*) = x^j (t_*) \coloneqq x_*.$$
Then 
 if $f'(t_*) = 0$, one can get  
\begin{equation*}
    0 = f'(t_*) = \lambda_i(t_*, x_*) - \lambda_j(t_*, x_*),
\end{equation*}
which contradicts \eqref{eigenvalue_ineq}.

Now  we  assume that  $x^i(t)$ and $x^j(t)$ intersect each other more than once in $[0,T^*)$. Then $f(t) = 0$ holds  at least for  two different  points: $\Bar{a}_1$ and $\Bar{a}_2$. One can choose  a time $\widetilde{t} \in (\Bar{a}_1,\Bar{a}_2)$ such that $f(\widetilde{t}) \neq 0$. Without loss of generality, assume   $f(\widetilde{t}) > 0$. It is clear  that  such kind of  points indeed  exist, or else $f(t) \equiv 0$ for  $t \in (\Bar{a}_1,\Bar{a}_2)$, which is  impossible. 

By the continuity of $f(t)$, there exists two points $\Bar{a}_3\in [\Bar{a}_1,\Bar{a}_2)$ and $\Bar{a}_4\in (\Bar{a}_1,\Bar{a}_2]$ such that $\widetilde{t} \in (\Bar{a}_3,\Bar{a}_4)$,  $f(t) > 0$ for all $t \in (\Bar{a}_3,\Bar{a}_4)$, and   $f(\Bar{a}_3) = f(\Bar{a}_4) = 0$, which implies that 
$$x^i (\Bar{a}_3) = x^j (\Bar{a}_3) \coloneqq \Bar{A}\quad \text{and}\quad x^i (\Bar{a}_4) = x^j (\Bar{a}_4) \coloneqq \Bar{B}.$$
As $f'(\Bar{a}_3) \neq 0$ and $f'(\Bar{a}_4) \neq 0$, one has  $f'(\Bar{a}_3) > 0$ and $f'(\Bar{a}_4) < 0$. Then 
\begin{equation*}
 \lambda_i(\Bar{a}_3,\Bar{A}) > \lambda_j(\Bar{a}_3,\Bar{A}),\quad   \lambda_i(\Bar{a}_4, \Bar{B}) < \lambda_j(\Bar{a}_4, \Bar{B}),
\end{equation*}
which contradicts \eqref{eigenvalue_ineq}. The proof of this lemma is complete.
\end{proof}

Next we show  that $(w,z)$ are uniformly bounded up to the  time $T^*$. 
\begin{lemma}\label{RERiemannInvairantBound}
\begin{equation}
    \sup_{(t,x) \in [0,T^*)\times \mathbb{R}} |w(t,x)| \leq \max|w|,\quad     \sup_{(t,x) \in [0,T^*)\times \mathbb{R}} |z(t,x)| \leq \max|z|,
\end{equation}
where  $\max|w|$ and $\max|z|$ are the  constants defined in  \eqref{kcb}, and are independent of $T^*$.
\end{lemma}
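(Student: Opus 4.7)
The plan is to integrate the Riemann-variable equations \eqref{riPDEs} along the $2$- and $3$-characteristics, convert them into Stieltjes integral inequalities driven by the total variation of the entropy, and close a coupled Gronwall-type system in the sup-norms of $w$ and $z$.

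First, using \eqref{riPDEs} together with $\eqref{releul1D}_3$ (equivalent to $\partial_1 S = 0$), I rewrite the Riemann equations as
\begin{equation*}
  \partial_3 w = a(\rho,S)\,\partial_3 S, \qquad \partial_2 z = -a(\rho,S)\,\partial_2 S,
\end{equation*}
and integrate along the corresponding characteristics. The central structural observation is that the total variation of $s \mapsto S(s, x^3(s))$ on $[0, T^*)$ is bounded by $\|S_0\|_{BV}$. Indeed, $S(s, x^3(s)) = S_0(\xi(s))$, where $\xi(s)$ is the time-$0$ foot of the $1$-characteristic through $(s, x^3(s))$; by the strict ordering $\lambda_3 < \lambda_1 < \lambda_2$ in \eqref{eigenvalue_ineq} and the non-recrossing property in Proposition \ref{EigvenvalueIneq}, the map $s \mapsto \xi(s)$ is strictly monotone, yielding the variation bound. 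The analogous statement holds along every $2$-characteristic.

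Combining with the pointwise estimate $|a| \leq K(c,B)(z-w) \leq K(c,B)(|z|+|w|)$ from Proposition \ref{ACoro_aleqK}, I obtain
\begin{equation*}
  |w(t,x^3(t))| \leq |w_0(x_0^3)| + K(c,B)\int_0^t (|w|+|z|)(s,x^3(s))\,|dS^3(s)|,
\end{equation*}
where $|dS^3|$ is the total-variation measure of $S$ along the $3$-characteristic, with total mass $\leq \|S_0\|_{BV}$. A Gronwall--Stieltjes inequality absorbing the $|w|$ contribution and setting $V := \exp(K(c,B)\,\|S_0\|_{BV})$ as in \eqref{kcb} yields
\begin{equation*}
  |w(t,x^3(t))| \leq V|w_0(x_0^3)| + KV\int_0^t |z(s,x^3(s))|\,|dS^3(s)|,
\end{equation*}
and an analogous bound for $z$ along $2$-characteristics. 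Taking suprema over starting points and writing $W^*(t) := \sup_{[0,t]\times\mathbb{R}}|w|$, $Z^*(t) := \sup_{[0,t]\times\mathbb{R}}|z|$, this produces the coupled system
\begin{equation*}
  W^*(t) \leq V\sup_x |w_0| + KV\|S_0\|_{BV}\,Z^*(t), \quad Z^*(t) \leq V\sup_x |z_0| + KV\|S_0\|_{BV}\,W^*(t).
\end{equation*}

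The final step is to close this coupled system so as to match the precise bound $\max|w| = U_1(1 + x e^x)$ of \eqref{kcb}, where $x = K^2V^2\|S_0\|_{BV}^2$. A naive algebraic substitution gives only $W^*(t) \leq U_1 + x\,W^*(t)$, which closes under the smallness condition $x<1$. To obtain the stated bound without smallness, I would instead substitute the integral form of the $z$-inequality into the $w$-inequality \emph{before} taking any sup, producing a nested Stieltjes iteration of the form $\int_0^t |dS^3(s)| \int_0^s |w|(\tau,\cdot)\,|dS^2(\tau)|$. Bounding the inner integrand by $W^*$ and invoking the standard Gronwall--Stieltjes principle, $\phi(t) \leq A + B\int_0^t \phi(s)\,d\mu(s) \Rightarrow \phi(t) \leq A\exp(B\mu([0,t]))$, on the resulting non-decreasing quantity yields the exponential factor $e^x$ and hence the bound $W^*(t) \leq \max|w|$. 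The main obstacle lies precisely here: since $|dS^3|$ and $|dS^2|$ depend on the choice of characteristic, one must dominate them by reference positive measures of total mass $\leq \|S_0\|_{BV}$ uniformly in that choice, so that the Gronwall step applies in a single-variable setting. The symmetric argument gives $Z^*(t) \leq \max|z|$, and since $U_1, U_2, V, K(c,B), \|S_0\|_{BV}$ are all independent of $T^*$, the bounds propagate uniformly on $[0, T^*)$ as required.
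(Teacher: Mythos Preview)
Your strategy is essentially the paper's: integrate \eqref{riPDEs} along the $3$- and $2$-characteristics, bound $\int_0^t|\partial_\tau S(\tau,x^i(\tau))|\,d\tau\le\|S_0\|_{BV}$ via the strictly monotone map to the $x^1$-feet (this is exactly how the paper uses Proposition~\ref{EigvenvalueIneq}), absorb the self-term by an integrating factor to manufacture $V=\exp(K\|S_0\|_{BV})$, substitute the $z$-estimate into the $w$-estimate \emph{before} taking suprema to produce the nested double integral, and close with a Gronwall argument. All of these pieces are in the paper, and your sketch captures them accurately.

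Where you stop short is precisely the point you yourself flag. The paper's device is \emph{not} to run Gronwall on $W^*(t)=\sup_{[0,t]\times\mathbb R}|w|$, but on the $|\partial_\tau S|$-weighted integral
\[
W(t)\;:=\;\sup_{x_0^2}\int_0^t |w(\tau,x^2(\tau))|\,|\partial_\tau S(\tau,x^2(\tau))|\,d\tau,
\]
which is nondecreasing with $W(0)=0$. After the nested substitution one has the pointwise bound $|w(t,x)|\le U_1+K^2V^2\sup_{x_0^3}\int_0^t|\partial_\sigma S|\,W(\sigma)\,d\sigma$; the paper then multiplies this by $|\partial_\tau S|$ and integrates along an $x^2$-characteristic. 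The monotonicity $W(\sigma)\le W(\tau)$ for $\sigma\le\tau$ collapses the inner $\sup_{x_0^3}$-integral to $W(\tau)\,\|S_0\|_{BV}$, leaving
\[
W(t)\;\le\;\|S_0\|_{BV}\,U_1 \;+\; K^2V^2\|S_0\|_{BV}\,\sup_{x_0^2}\int_0^t W(\tau)\,|\partial_\tau S|\,d\tau,
\]
to which the paper applies the Gronwall inequality together with the total-variation bound \eqref{intdtS} to obtain $W(t)\le\|S_0\|_{BV}\,U_1\exp\bigl(K^2V^2\|S_0\|_{BV}^2\bigr)$. Substituting this back into the pointwise $|w|$-estimate gives $\max|w|$ in the exact form \eqref{kcb}. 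So the concrete missing idea in your sketch is the choice of the Gronwall variable: the paper closes on the integral functional $W$ along the $x^2$-family rather than on $\sup|w|$, and it is this choice that turns the nested characteristic-dependent iteration into a single-variable Gronwall inequality yielding the exponential bound without any smallness hypothesis.
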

\begin{proof}
We will focus only on $w$, and  $z$ can be dealt with similarly.
For simplicity, let $\landdownint$ denote the integral  along the characteristic $x^3(t)$, and  $\landupint$  the integral  along the characteristic $x^2(t)$, i.e., for any function $f(t,x)$,  
\begin{equation*}
    \landdownint_{0}^{t} f(\sigma) \text{d}\sigma \equiv \int_{0}^{t} f(\sigma,x^3(\sigma)) \text{d}\sigma, \quad 
    \landupint_{0}^{t} f(\sigma) \text{d}\sigma \equiv \int_{0}^{t} f(\sigma,x^2(\sigma)) \text{d}\sigma.
\end{equation*}
Moreover,  $K(c,B)$ will be abbreviated as $K$. The proof is  divided into two steps: the integral  along the characteristics and the   boundedness of $(w,z)$.

Step 1: the integral  along the characteristics. There are two cases that need to be   dealt with separately:  $w(t,x^3(t)) \geq 0$ and  $w(t,x^3(t)) < 0$. First of all, when  $w(t,x^3(t)) \geq 0$,
it follows from  Proposition  \ref{ACoro_aleqK} and $\eqref{riPDEs}_1$ that
\begin{equation}
    \label{dtwineq}
    \partial_t w(t,x^3(t)) \leq K(z-w) |\partial_t S(t,x^3(t))|.
\end{equation}
Define the function $I(t)$ by 
\begin{equation}
    \label{I}
    I(t) \coloneqq \exp \left(K \int_{0}^{t} |\partial_\sigma S(\sigma,x^3(\sigma))| \text{d}\sigma \right).
\end{equation}
Multiplying \eqref{dtwineq} by $I(t)$ and integrating over $[0,t]$ along $x^3(t)$, one has 
\begin{equation}
\label{w_ineq0}
    \begin{split}
        \displaystyle w(t,x^3(t)) &\leq w_0(x_0^3)\frac{1}{I(t)} +K\frac{1}{I(t)} \landdownint_{0}^{t} z I |\partial_\sigma S| \text{d}\sigma.
    \end{split}
\end{equation}

According to the continuity of   $\partial_t S(t,x^3(t))$ and  Proposition \ref{B_SplitContinuousFunctions} in Appendix B, one can split $[0,t]$ into countably many disjoint intervals $\bigcup_{n=0}^{\infty} [t_n,t_{n+1}]$ with $t_0 = 0$ and  $t_\infty = t$, such that $\partial_t S(t,x^3(t))$ has a constant sign on each $[t_n,t_{n+1}]$.

Now we consider the characteristic $x^3(t,x_0^3)$ that starts from the point $(0,x_0^3)$. For any two different time endpoints $t_i, t_j\in (0,T^*)$ obtained above, one can introduce two  characteristics:  $x^1(t,x_i)$ that intersects $x^3(t,x_0^3)$ at $(t_i, x^3(t_i,x_0^3))$, and intersects  
$x$-axis at $(0,x_{i})$; and  $x^1(t,x_j)$ that intersects $x^3(t,x_0^3)$ at $(t_j, x^3(t_j,x_0^3))$, and intersects  
$x$-axis at $(0,x_{j})$. According to  Proposition \ref{EigvenvalueIneq}, each of the characteristics $x^1(t)$ intersects  $x^3(t)$  at most once, so one finds  that  $x_{i}$ is different from $x_j$ provided $i \neq j$. See the figure below.


\begin{center}
    \includegraphics[scale=0.35]{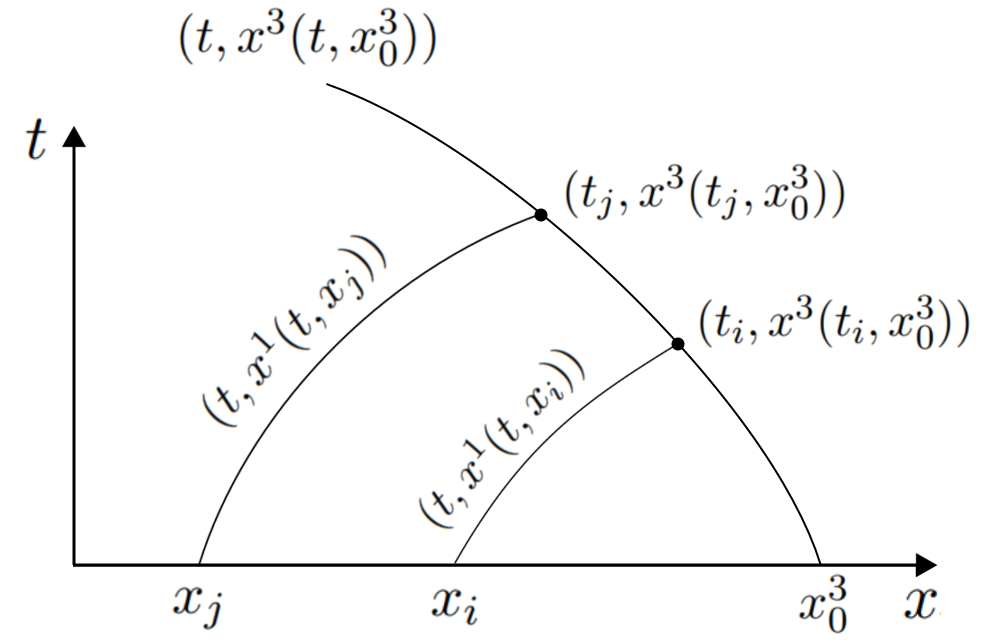}
\end{center}

Then since $\partial_t S(t,x^1(t))=0$, one can obtain that 
\begin{equation}
    \label{intdtS}
    \begin{split}
    &\int_{0}^{t} |\partial_\sigma S(\sigma,x^3(\sigma))| \text{d}\sigma  = \sum_{i = 1}^\infty \int_{t_{i-1}}^{t_i} |\partial_\sigma S(\sigma,x^3(\sigma))| \text{d}\sigma \\
    =& \sum_{i = 1}^\infty |S(t_{i-1},x^3(t_{i-1})) - S(t_{i},x^3(t_{i}))| = \sum_{i = 1}^\infty |S_0(x_{i-1}) - S_0(x_{i})|  \leq \|S_0\|_{BV},
    \end{split}
\end{equation}
which, along with \eqref{I}, yields that 
\begin{equation}
    \label{I_ineq}
    \frac{1}{I(t)}\leq 1  \leq I(t) \leq \exp(K\|S_0\|_{BV}) = V.
\end{equation}

Then it follows from  \eqref{w_ineq0} and  \eqref{I_ineq} that 
\begin{align}
    \label{wbound0}
    |w(t,x^3(t))| &\leq
    |w_0(x_{0}^{3})| V + KV \landdownint_{0}^{t} |z\partial_\sigma S| \text{d}\sigma.
\end{align}

Second,  when $w(t,x^3(t)) < 0$, it follows from  Proposition  \ref{ACoro_aleqK} and $\eqref{riPDEs}_1$ that 
\begin{equation*}
    \partial_t w(t,x^3(t)) \geq -K(z-w) |\partial_t S(t,x_3(t))|.
\end{equation*}
Hence 
\begin{equation}
    \displaystyle 0> w(t,x^3(t)) \geq w_0(x_0^3)I(t) - K I(t) \landdownint_{0}^{t} z \frac{1}{I} |\partial_\sigma S| \text{d}\sigma,
\end{equation}
which, along with  \eqref{I_ineq}, implies that \eqref{wbound0} still holds when $w(t,x^3(t)) < 0$.

 Note the integral in \eqref{wbound0} depends on $x_0^3$, hence 
\begin{align}
    \label{wbound1}
    |w(t,x)| &\leq
    \sup_{x \in \mathbb{R}}|w_0(x)| V + KV \sup_{x_0^3 \in \mathbb{R}} \landdownint_{0}^{t} |z\partial_\sigma S| \text{d}\sigma.
\end{align}

Similarly, for $z$ satisfying  $\eqref{riPDEs}_2$, one can also  obtain 
\begin{equation}
    \label{zbound1}
    |z(t,x)| \leq
    \sup_{x \in \mathbb{R}} |z_0(x)| V + KV \sup_{x_0^2 \in \mathbb{R}} \landupint_{0}^{t} |w\partial_\sigma S| \text{d}\sigma.
\end{equation}

Substituting  \eqref{zbound1} into \eqref{wbound1}, one can obtain that 
\begin{equation}
    \label{wbound4}
    \begin{split}
    |w(t,x)| \leq & \sup_{x \in \mathbb{R}}|w_0(x)| V + KV^2 \sup_{x \in \mathbb{R}} |z_0(x)| \|S_0\|_{BV} \\ &+ K^2V^2 \sup_{x_0^3 \in \mathbb{R}}  \landdownint_{0}^{t}|\partial_\sigma S| \Big(\sup_{x_0^2 \in \mathbb{R}}  \landupint_{0}^{\sigma} |\partial_\phi S||w(\phi)| \text{d} \phi \Big)\text{d} \sigma,
    \end{split}
\end{equation}
where one has used   \eqref{intdtS}.

Step 2:  the uniform bound for $|w|$. Replacing  $w(t,x)$ by $w(\tau, x^2(\tau))$ in \eqref{wbound4}, multiplying  by $|\partial_\tau S|$ and integrating  over $[0,t]$ for  $\tau$ along  $x^2(t)$,  one gets
\begin{equation}
    \label{intwdtSleq}
    \begin{split}
    \landupint_{0}^{t} |w(\tau)|& |\partial_\tau S| \text{d} \tau \leq  \sup_{x \in \mathbb{R}}|w_0(x)| V \|S_0\|_{BV} + KV^2 \sup_{x \in \mathbb{R}} |z_0(x)| \|S_0\|_{BV}^2 \\ 
    &+  K^2V^2\landupint_{0}^{t}|\partial_\tau S|  \Big(\sup_{x_0^3 \in \mathbb{R}} \landdownint_{0}^{\tau} |\partial_\sigma S| \sup_{x_0^2 \in \mathbb{R}} \Big(\landupint_{0}^{\sigma} |\partial_\phi S||w(\phi)|  \text{d} \phi \Big) \text{d} \sigma \Big) \text{d} \tau,
    \end{split}
\end{equation}
where one has used   \eqref{intdtS}.

Denote $W(t)$ by
\begin{equation*}
    W(t) \coloneqq \sup_{x_{0}^{2} \in \mathbb{R}} \landupint_{0}^{t} |w(\tau)| |\partial_\tau S| \text{d} \tau.
\end{equation*}
then one has
\begin{equation*}
\begin{split}
    W(t) \leq & V \sup_{x\in \mathbb{R}} |w_0(x)| \|S_0\|_{BV} + KV^2 \sup_{x\in \mathbb{R}} |z_0(x)| \|S_0\|_{BV}^2 \\
    &+  K^2V^2 \sup_{x_0^2 \in \mathbb{R}} \landupint_{0}^{t}|\partial_\tau S|  \Big(\sup_{x_0^3 \in \mathbb{R}} \landdownint_{0}^{\tau} |\partial_\sigma S|  W(\sigma) \text{d} \sigma \Big) \text{d} \tau,
    \end{split}
\end{equation*}
where 
\begin{equation*}
    \landdownint_{0}^{\tau} |\partial_\sigma S|  W(\sigma) \text{d} \sigma \equiv \int_{0}^{\tau} |\partial_\sigma S(\sigma,x^3(\sigma))|  W(\sigma) \text{d} \sigma.
\end{equation*}

Note that  $W(\widetilde{\sigma}) \leq W(\tau)$ for all $\widetilde{\sigma} \in [0,\tau]$, one has 
\begin{equation*}
    \begin{split}
    W(t) \leq & V \sup_{x\in \mathbb{R}} |w_0(x)| \|S_0\|_{BV} + KV^2 \sup_{x\in \mathbb{R}} |z_0(x)| \|S_0\|_{BV}^2 \\&+  K^2V^2 \sup_{x_0^2 \in \mathbb{R}} \landupint_{0}^{t} W(\tau) |\partial_\tau S| \left( \sup_{x_0^3 \in \mathbb{R}} \landdownint_{0}^{\tau} |\partial_\sigma S| \text{d} \sigma \right) \text{d} \tau\\
 \leq & V \sup_{x\in \mathbb{R}} |w_0(x)| \|S_0\|_{BV} + KV^2 \sup_{x\in \mathbb{R}} |z_0(x)| \|S_0\|_{BV}^2 \\ &+  \|S_0\|_{BV}K^2V^2
    \sup_{x_0^2 \in \mathbb{R}} \landupint_{0}^{t} W(\tau) |\partial_\tau S| \text{d} \tau.
    \end{split}
\end{equation*}
According to  the Gronwall inequality and  \eqref{intdtS}, one has
\begin{equation*}
    W(t) \leq \|S_0\|_{BV} U_1 \exp\left(K^2V^2 \|S_0\|_{BV}^2\right),
\end{equation*}
which, along with  \eqref{wbound4} and  \eqref{intdtS}, implies  the desired  bound for $w$.
\end{proof}

Based on the conclusions obtained in Proposition 4.1-4.2 and Lemma 4.1, now one has 
\begin{theorem} \label{REbound}
$T^* = T$.
\end{theorem}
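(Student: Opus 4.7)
The plan is to proceed by contradiction: suppose $T^*<T$. The strategy is to combine the uniform bounds on the Riemann variables from Lemma \ref{RERiemannInvairantBound}, which are finite constants determined purely by the initial data and independent of $T^*$, with Propositions \ref{PropG}--\ref{PropL} and Assumption \ref{AssumptionSoundSpeed<c}, to verify that each of the three conditions defining $T^*$ holds with a strictly positive, uniform gap throughout $[0,T^*)\times\mathbb{R}$. Once that is established, the $C^1$ regularity of $(\rho,u,S)$ on $[0,T)$ guarantees that those gaps persist on a right neighborhood of $T^*$, contradicting its maximality.

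First I would control $|u|$. From \eqref{wz} one has the explicit identity $\frac{w+z}{c}=\ln\frac{c+u}{c-u}$, which is a diffeomorphism from $u\in(-c,c)$ onto $\mathbb{R}$. Lemma \ref{RERiemannInvairantBound} gives $|w+z|\leq\max|w|+\max|z|$ on $[0,T^*)\times\mathbb{R}$, hence $c-|u|$ admits a strictly positive lower bound depending only on the initial data. The pointwise positivity $\rho>0$ is automatic for as long as the $C^1$ solution exists, by Definition \ref{ 2.1-non}, so this third condition in the definition of $T^*$ needs no further work.

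It remains to bound $\sqrt{\partial_\rho P}$ away from $c$. Recalling from \eqref{F} that $F(\rho,S)=(z-w)/2$, and using $\psi(\rho)\leq F(\rho,S)$ from Proposition \ref{PropG} (which holds because $\psi$ is built from the infimum of the integrand over $S\in[-B,B]$, and $|S|\leq B$ by Proposition \ref{EntropyBound}), Lemma \ref{RERiemannInvairantBound} gives $\psi(\rho)\leq(\max|z|+\max|w|)/2$. Strict monotonicity and continuity of $\psi$ then yield $\rho\leq M_2$ uniformly on $[0,T^*)\times\mathbb{R}$. Proposition \ref{PropL} upgrades this to
\[ \sqrt{\partial_\rho P(\rho,S)}\ \leq\ \Psi(M_2)\,\frac{z-w}{2}\ \leq\ \Psi(M_2)\,\frac{\max|z|+\max|w|}{2}\ <\ c, \]
where the last strict inequality is exactly Assumption \ref{AssumptionSoundSpeed<c}. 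Hence $c-\sqrt{\partial_\rho P}$ enjoys a strictly positive uniform lower bound on $[0,T^*)\times\mathbb{R}$, completing the bootstrap and producing the desired contradiction.

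The only real subtlety, and the main obstacle, is bookkeeping: one must verify that all the bounding constants $\max|w|$, $\max|z|$, $M_2$ and $\Psi(M_2)$ depend only on $(z_0,w_0,S_0)$ together with the fixed parameters of the system, and never on $T^*$ itself or on quantities that might degrade as $t\to T^*$. This is precisely what the explicit formulas in \eqref{kcb} are engineered to provide, and it is the reason Assumption \ref{AssumptionSoundSpeed<c} is stated at the initial-data level with a strict inequality rather than as a closure hypothesis. Once this point is secured, no new estimate beyond Lemma \ref{RERiemannInvairantBound} is required and the proof reduces to a continuity closure of the open conditions defining $T^*$.
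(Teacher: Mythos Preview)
Your proposal is correct and mirrors the paper's argument for $|u|$ and $\sqrt{\partial_\rho P}$: both use the identity $(w+z)/c=\ln\frac{c+u}{c-u}$ together with Lemma~\ref{RERiemannInvairantBound} for the former, and the chain Proposition~\ref{PropG} $\Rightarrow$ $\rho\le M_2$ $\Rightarrow$ Proposition~\ref{PropL} $\Rightarrow$ Assumption~\ref{AssumptionSoundSpeed<c} for the latter.

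The one difference is your handling of $\rho>0$. You invoke Definition~\ref{ 2.1-non}, which builds $\rho>0$ into the notion of $C^1$ solution on $[0,T)$; since $T^*<T$ is the working hypothesis, pointwise positivity on $[0,T^*]$ is indeed free. The paper instead derives $z-w>0$ (equivalently $\rho>0$) by a Gronwall argument along the characteristic $x^2(t)$: using \eqref{z-w1}--\eqref{lambda2-lambda3} and the bounds $\sqrt{\partial_\rho P}\lesssim z-w$, $|a|\lesssim z-w$ from Propositions~\ref{PropL}--\ref{ACoro_aleqK}, one gets $\partial_2(z-w)\ge -\widetilde K(\cdots)(z-w)$ and hence $z-w\ge (z_0-w_0)\exp(\cdots)>0$. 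Given the definition as stated, your shortcut is legitimate and the paper's Gronwall step is redundant for the purposes of Theorem~\ref{REbound}; what the paper's argument buys is a quantitative lower bound on $z-w$ in terms of $\partial_x w$ and entropy derivatives, which is in the spirit of (though not directly the same as) the sharper density estimates developed later in \S4.4.
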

\begin{proof}
Assume for contradiction that $T^* < T$. Via the continuity of our solution, in order to prove  this theorem, we just need to show that the facts  $|u| < c$ uniformly  and $\sqrt{\partial_\rho P} < c$ uniformly both   hold at $t=T^*$, and   $\rho(T^*,x) > 0$ for $(t,x) \in [0, T^*] \times \mathbb{R}$.

 First, according to Lemma 4.1, one has 
\begin{align*}
    \left|\ln\left(\frac{c+u}{c-u}\right) \right|  = 
    \frac{1}{c}|w+z| \leq \frac{max|z|+max|w|}{c}\quad \text{for} \quad  (t,x) \in [0, T^*] \times \mathbb{R},
\end{align*}
which means that  the fact  that  $|u| < c$  uniformly  still  holds at $t=T^*$.

Second, according to Proposition \ref{PropG} and Lemma \ref{RERiemannInvairantBound}, one gets
\begin{equation*}
    \psi(\rho) \leq \frac{z-w}{2} \leq \frac{\max|z|+\max|w|}{2}\quad \text{for} \quad  (t,x) \in [0, T^*] \times \mathbb{R}.
\end{equation*}
 By the properties of $\psi(\rho)$, one can take its inverse and get 
\begin{equation}\label{rhobound}
    \rho \leq \psi^{-1}\left(\frac{\max|z|+\max|w|}{2}\right),
\end{equation}
which, along with  Assumption \ref{AssumptionSoundSpeed<c}, yields  that the fact that   $\sqrt{\partial_\rho P} < c$ uniformly  still  holds at $t=T^*$.

Next we consider  the positivity of $\rho$. By $\eqref{riPDEs}$ and \eqref{partialShortHandNotations}, one has
\begin{equation}
    \label{z-w1}
    \partial_2 (z-w) = (\lambda_3 - \lambda_2)\partial_x w - a (\partial_2 S + \partial_3 S) .
\end{equation}
By \eqref{3tezheng} and Proposition \ref{EigvenvalueIneq}, one has
\begin{equation}
    \label{lambda2-lambda3}
    \lambda_2 - \lambda_3 = \frac{2c^2 \sqrt{\partial_\rho P}(c^2-u^2)}{c^4-u^2\partial_\rho P} > 0.
\end{equation}
It follows from \eqref{rhobound} and Proposition \ref{PropL}-\ref{ACoro_aleqK} that, there exists a fixed constant $\widetilde{K} > 0$ independent of $T^*$, such that
\begin{equation*}
    \sqrt{\partial_\rho P} \leq \widetilde{K} \left(z-w \right), \quad |a| \leq \widetilde{K} (z-w),
\end{equation*}
which, together with \eqref{z-w1}-\eqref{lambda2-lambda3}, yields that 
\begin{align*}
    \partial_2 (z-w) & 
    \geq -\frac{2c^2 \sqrt{\partial_\rho P}(c^2-u^2)}{c^4-u^2\partial_\rho P} \left|\partial_x w \right| - |a| |\partial_2S + \partial_3S| \\&
    \geq - \widetilde{K} \left(\frac{2c^2 (c^2-u^2)}{c^4-u^2\partial_\rho P} |\partial_x w| + |\partial_2 S + \partial_3 S| \right) (z-w).
\end{align*}
Then it follows from  the Gronwall inequality and Assumption \ref{REAssumpInitialData} that 
\begin{align*}
    z-w &\geq (z_0-w_0) \exp\left(\int_{0}^{t} - \widetilde{K} \left(\frac{2c^2 (c^2-u^2)}{c^4-u^2\partial_\rho P} |\partial_x w| + |\partial_2 S + \partial_3 S| \right) (\tau, x^2(\tau)) \text{d} \tau \right)>0.
\end{align*}

At last,  according to  Proposition \ref{PropG},  one has 
\begin{equation*}
    \rho = \psi^{-1}(z-w) > 0\quad \text{for} \quad  (t,x) \in [0, T^*] \times \mathbb{R}.
\end{equation*}
The proof of this theorem is  complete.
\end{proof}

\subsection{Non-homogeneous  ODEs of the Riemann variables} \label{RiccatiODEsSubsection}

The aim of this subsection is to  converts the PDEs \eqref{riPDEs} into two decoupled ODEs of  $(r,q)$ defined in \eqref{yandq}. For this purpose,  one needs to differentiate \eqref{riPDEs} with respect to $x$, and carefully analysis each of the terms via  the quantities and lemmas shown  below.
For $C^1$ solutions to the Cauchy problem \eqref{releul1D} with \eqref{non-initial}, the quantity $\widetilde{n} \coloneqq \frac{cn}{\sqrt{c^2-u^2}}$ introduced  in \eqref{tidleAlphaBeta} satisfies the    following equation  (see Chapter 5 in \cite{lidaqianBook}), 
\begin{equation}
    \label{tidle_n}
    \partial_t \widetilde{n} + \partial_x (u \widetilde{n}) = 0.
\end{equation}


The first lemma  is of crucial importance, which  gives some of the quantities that can be expressed in terms of the initial data, i.e. have an uniform bound. 
\begin{lemma}\label{REconserved}
Let $x^1(t)$ be the characteristic defined by $\eqref{characteristics}_1$, then
\begin{equation}
\begin{split}
    \label{dxS_0r}
    &\frac{\partial_x S(t,x^1(t))}{\widetilde{n}(t,x^1(t))} = \frac{\partial_x S_0(x_0^1)}{\widetilde{n}_0 (x_0^1)},\\
    &\partial_{xx}S(t,x^1(t)) - \frac{\partial_x S(t,x^1(t))}{\widetilde{n}(t,x^1(t))} \partial_x \widetilde{n} (t,x^1(t))\\
    =& \widetilde{n}(t,x^1(t))^2 \left(\partial_{xx}S_0 \frac{1}{\widetilde{n}_0^2} - \partial_x S_0 \frac{\partial_x \widetilde{n}_0}{\widetilde{n}_0^3} \right)(x_0^1).
    \end{split}
\end{equation}

\end{lemma}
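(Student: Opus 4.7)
The plan is to exploit the simple transport structure of the entropy equation $\partial_t S + u \partial_x S = 0$ (which is $\partial_1 S = 0$ in the notation of \eqref{partialShortHandNotations}) and the conservation law \eqref{tidle_n} for $\widetilde{n}$, and then differentiate in $x$ once (for the first identity) and twice (for the second).

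First I would show that $\partial_x S$ and $\widetilde n$ satisfy the \emph{same} linear transport equation along $x^1(t)$. Differentiating $\partial_t S + u\partial_x S = 0$ in $x$ yields
\begin{equation*}
\partial_1(\partial_x S) = -(\partial_x u)\,\partial_x S,
\end{equation*}
while expanding \eqref{tidle_n} gives
\begin{equation*}
\partial_1 \widetilde n = -(\partial_x u)\,\widetilde n.
\end{equation*}
Since $\partial_x S$ and $\widetilde n$ obey the same linear ODE along the $1$-characteristic, the quotient rule immediately yields
\begin{equation*}
\partial_1\!\left(\frac{\partial_x S}{\widetilde n}\right)=0,
\end{equation*}
which, integrated from $0$ to $t$ along $x^1(t)$, gives the first identity in \eqref{dxS_0r}.

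For the second identity, I set $\mu \coloneqq \partial_x S/\widetilde n$ (constant along $x^1(t)$ by the first step) and $G \coloneqq \partial_{xx}S - \mu\,\partial_x \widetilde n$, so that the left-hand side of the claim equals $G$. Differentiating the two transport equations a second time in $x$ and using the commutator identity $\partial_x \partial_1 = \partial_1 \partial_x + (\partial_x u)\partial_x$, I would obtain
\begin{equation*}
\partial_1(\partial_{xx} S) = -(\partial_{xx} u)\,\partial_x S - 2(\partial_x u)\,\partial_{xx} S,
\end{equation*}
\begin{equation*}
\partial_1(\partial_x \widetilde n) = -(\partial_{xx} u)\,\widetilde n - 2(\partial_x u)\,\partial_x \widetilde n.
\end{equation*}
Subtracting $\mu$ times the second from the first, the $\partial_{xx} u$ terms cancel exactly because $\mu\widetilde n = \partial_x S$, leaving
\begin{equation*}
\partial_1 G = -2(\partial_x u)\,G.
\end{equation*}
Since $\partial_1(\widetilde n^2) = -2(\partial_x u)\,\widetilde n^2$ as well, we conclude
\begin{equation*}
\partial_1\!\left(\frac{G}{\widetilde n^2}\right)=0,
\end{equation*}
and integrating along $x^1(t)$ from $0$ to $t$ produces the second identity.

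There is no real obstacle here; everything reduces to noticing that $\partial_x S$ and $\widetilde n$ transport with the same coefficient and that the combination $G = \partial_{xx}S - \mu\,\partial_x \widetilde n$ is designed precisely so that the second-order term $\partial_{xx}u\cdot\partial_x S$ in the evolution of $\partial_{xx} S$ cancels against $\mu \widetilde n \partial_{xx} u$ from the evolution of $\partial_x \widetilde n$. The only bookkeeping care needed is in applying the commutator correctly when differentiating $\partial_1$ in $x$.
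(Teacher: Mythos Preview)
Your proof is correct and follows essentially the same approach as the paper: both differentiate the entropy equation and the continuity law \eqref{tidle_n} in $x$, observe that $\partial_x S$ and $\widetilde n$ satisfy the same linear transport ODE along $x^1(t)$, and then repeat at second order, using the cancellation of the $\partial_{xx}u$ terms. The only difference is packaging: the paper introduces the explicit integrating factor $\mathcal{I}(t)=\exp\bigl(2\int_0^t \partial_x u\,ds\bigr)$, integrates $\partial_x\eta\,\mathcal{I}$ and $\partial_x\widetilde n\,\mathcal{I}$ separately, and then subtracts, whereas you form the combination $G=\partial_{xx}S-\mu\,\partial_x\widetilde n$ first and cancel at the ODE level before integrating---a slightly cleaner route to the same conclusion.
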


\begin{proof}
We divided the proof into two steps: the proof for $\eqref{dxS_0r}_1$ and the  proof for $\eqref{dxS_0r}_2$.

Step 1: the proof for $\eqref{dxS_0r}_1$. Differentiating $\eqref{releul1D}_3$ with respect to $x$, then along the characteristic  $x^1(t)$, one can obtain  that
\begin{equation}\label{gugaxiaozi}
    \eta(t,x^1(t)) = \eta_0(x^1_0) \exp\left(-\int_{0}^{t}\partial_x u(s,x^1(s)) \text{d} s\right),
\end{equation}
where $\eta = \partial_x S$. 

It follows from \eqref{tidle_n} that 
\begin{equation*}
    \widetilde{n}(t,x^1(t)) = \widetilde{n}_0(x^1_0) \exp\left(-\int_{0}^{t}\partial_x u(s,x^1(s)) \text{d} s\right),
\end{equation*}
which, together with \eqref{gugaxiaozi}, implies $\eqref{dxS_0r}_1$.

Step 2: the proof for $\eqref{dxS_0r}_2$.
First,  differentiating \eqref{tidle_n} with respect to $x$, one can get 
\begin{align*}
 \partial_1(\partial_x \widetilde{n}) = -\partial_{xx}u \widetilde{n} - 2 \partial_x u \partial_x \widetilde{n}. 
\end{align*}
Define an integration factor $\mathcal{I}(t)$ by 
\begin{equation*}
    \mathcal{I}(t) \coloneqq \exp\left(2\int_{0}^{t}\partial_x u(s,x^1(s)) \text{d} s\right).
\end{equation*}
Multiplying   the equation above by $\mathcal{I}(t)$ on both sides, one has that
\begin{equation*}
    \partial_1(\partial_x \widetilde{n} \mathcal{I}) = - \partial_{xx} u \widetilde{n} \mathcal{I}.
\end{equation*}
Integrating the equation along the characteristic $x^1(t)$, one has that
\begin{equation}
    \label{dxrho}
    \partial_x \widetilde{n} (t,x^1(t))\mathcal{I}(t) = \partial_x \widetilde{n}_0(x^1_0) - \int_{0}^{t}\partial_{xx}u(s,x^1(s)) \widetilde{n}(s,x^1(s)) \mathcal{I}(s) \text{d} s. 
\end{equation}

Second,  differentiating  the equation $\eqref{releul1D}_3$  with respect to $x$ twice, one can obtain 
\begin{equation*}
    \partial_1(\partial_x \eta) = - \partial_{xx} u \eta -2 \partial_x u \partial_x \eta.
\end{equation*}
Multiplying the above equation by $\mathcal{I}(t)$, and integrating over $[0,t]$ along the characteristic $x^1(t)$, one has
\begin{equation*}
    \partial_x \eta (t,x^1(t))\mathcal{I}(t) = \partial_x \eta_0(x^1_0) - \int_{0}^{t}\partial_{xx}u(s,x^1(s)) \eta(s,x^1(s)) \mathcal{I}(s) \text{d} s,
\end{equation*}
which, along with  $\eqref{dxS_0r}_1$, yields that 
\begin{equation}\label{neimaer}
    \partial_x \eta (t,x^1(t))\mathcal{I}(t) = \partial_x \eta_0(x^1_0) - \frac{\eta_0(x^1_0)}{\widetilde{n}_0(x^1_0)} \int_{0}^{t}\partial_{xx}u(s,x^1(s)) \widetilde{n}(s,x^1(s)) \mathcal{I}(s) \text{d} s. 
\end{equation}

Then according to  \eqref{dxrho} and  \eqref{neimaer}, one has 
\begin{equation}\label{feige}
    \partial_x \eta (t,x^1(t)) -  \frac{\eta_0(x^1_0)}{\widetilde{n}_0(x^1_0)} \partial_x \widetilde{n} (t,x^1(t)) = \frac{1}{\mathcal{I}(t)} \left(\partial_x \eta_0(x^1_0) - \frac{\eta_0(x^1_0)}{\widetilde{n}_0(x^1_0)}\partial_x \widetilde{n}_0(x^1_0)\right).
\end{equation}

At last,  according to  $\eqref{dxS_0r}_1$ and the following fact 
\begin{equation*}
    \exp\left(-\int_{0}^{t}\partial_x u(s,x^1(s)) \text{d} s\right) = \frac{\widetilde{n}(t,x^1(t))}{\widetilde{n}_0(x^1_0)} \implies 
    \frac{1}{\mathcal{I}(t)} = \frac{\widetilde{n}(t,x^1(t))^2}{\widetilde{n}_0(x^1_0)^2},
\end{equation*}
one can show that  $\eqref{dxS_0r}_2$ holds.
\end{proof}

According to Lemma 4.2, one can get the following several relations.
\begin{corollary}\label{REconservedCoro}
\begin{equation}
    \label{d1theta}
    \partial_1 \eta = \frac{\eta}{\widetilde{n}} \partial_1 \widetilde{n}.
\end{equation}
\begin{equation}
    \label{d3theta}
    \partial_3 \eta - \frac{\eta}{\widetilde{n}}\partial_3 \widetilde{n} = (\lambda_3 - \lambda_1) \left(\partial_{xx}S - \frac{\partial_x S}{\widetilde{n}} \partial_x \widetilde{n}\right).
\end{equation}
\begin{equation}
    \label{d2theta}
    \partial_2 \eta - \frac{\eta}{\widetilde{n}}\partial_2 \widetilde{n} = (\lambda_2 - \lambda_1) \left(\partial_{xx}S - \frac{\partial_x S}{\widetilde{n}} \partial_x \widetilde{n}\right).
\end{equation}
\end{corollary}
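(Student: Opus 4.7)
The three identities are essentially algebraic consequences of Lemma \ref{REconserved}, rewritten in terms of the directional operators $\partial_i$. The plan is to first derive \eqref{d1theta} directly from $\eqref{dxS_0r}_1$, and then obtain \eqref{d3theta} and \eqref{d2theta} by decomposing $\partial_3$ and $\partial_2$ along the $x^1$-characteristic using $\partial_i = \partial_1 + (\lambda_i - \lambda_1)\partial_x$ for $i=2,3$.

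For \eqref{d1theta}, the first identity in Lemma \ref{REconserved} says exactly that $\eta/\widetilde{n}$ is constant along each $x^1$-characteristic. Since the characteristic derivative along $x^1$ coincides with $\partial_1$ acting at $(t,x^1(t))$, we obtain $\partial_1(\eta/\widetilde{n}) = 0$ pointwise. Expanding this by the quotient rule and clearing denominators (which is legitimate because $\widetilde{n}=cn/\sqrt{c^2-u^2}>0$ by Theorem \ref{REbound}) yields $\widetilde{n}\partial_1\eta = \eta\partial_1\widetilde{n}$, i.e.\ \eqref{d1theta}.

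For \eqref{d3theta} and \eqref{d2theta}, observe that for any smooth function $f$ and $i\in\{2,3\}$,
\begin{equation*}
\partial_i f - \partial_1 f = (\lambda_i-\lambda_1)\partial_x f.
\end{equation*}
Applying this separately to $\eta$ and $\widetilde{n}$ and forming the combination $\partial_i\eta - \frac{\eta}{\widetilde{n}}\partial_i\widetilde{n}$ gives
\begin{equation*}
\partial_i\eta - \frac{\eta}{\widetilde{n}}\partial_i\widetilde{n} = \Bigl(\partial_1\eta - \frac{\eta}{\widetilde{n}}\partial_1\widetilde{n}\Bigr) + (\lambda_i-\lambda_1)\Bigl(\partial_x\eta - \frac{\eta}{\widetilde{n}}\partial_x\widetilde{n}\Bigr).
\end{equation*}
The first bracket vanishes by \eqref{d1theta}. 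In the second bracket, since $\eta = \partial_x S$ we have $\partial_x\eta = \partial_{xx}S$, which produces precisely the factor $\partial_{xx}S - \frac{\partial_x S}{\widetilde{n}}\partial_x\widetilde{n}$ on the right-hand side of \eqref{d3theta} and \eqref{d2theta}.

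There is no genuine obstacle here: the real work was done in Lemma \ref{REconserved}, where the two conserved quantities $\eta/\widetilde{n}$ and $\widetilde{n}^{-2}(\partial_{xx}S - (\eta/\widetilde{n})\partial_x\widetilde{n})$ were identified through integration along $x^1$ using $\eqref{releul1D}_3$ and \eqref{tidle_n}. (The second conserved quantity is not needed in this corollary, but it is what connects \eqref{d3theta}--\eqref{d2theta} to the quantities appearing in \eqref{tidleAlphaBeta} that drive the singularity-formation analysis in Section 4.) Consequently the corollary reduces to a short, purely algebraic calculation once \eqref{d1theta} has been established.
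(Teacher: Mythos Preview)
Your proof is correct and follows essentially the same approach as the paper: deriving \eqref{d1theta} by differentiating the first conserved quantity $\eta/\widetilde{n}$ along $x^1$, and then obtaining \eqref{d3theta}--\eqref{d2theta} via the decomposition $\partial_i = \partial_1 + (\lambda_i-\lambda_1)\partial_x$ together with \eqref{d1theta}. Your remark that only the first conserved quantity from Lemma~\ref{REconserved} is actually needed here is also accurate.
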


\begin{proof}
First, differentiating  $\eqref{dxS_0r}_1$ in Lemma \ref{REconserved} with respect to $t$, one gets that
\begin{equation*}
    \partial_t \left(\frac{\eta(t,x^1(t))}{\widetilde{n}(t,x^1(t))}\right) = \partial_1 \left(\frac{\eta(t,x)}{\widetilde{n}(t,x)}\right) = 0, 
\end{equation*}
 which, implies \eqref{d1theta}.

Second, for \eqref{d3theta}, actually one has 
\begin{align*}
    \partial_3 \eta - \frac{\eta}{\widetilde{n}}\partial_3\widetilde{n} &=
    \partial_1 \eta + (\lambda_3-\lambda_1) \partial_{xx} S -  \frac{\eta}{\widetilde{n}}\partial_1 \widetilde{n} -  (\lambda_3-\lambda_1) \frac{\eta}{\widetilde{n}} \partial_x \widetilde{n} \\&
    = \left(\partial_1 \eta - \frac{\eta}{\widetilde{n}}\partial_1 \widetilde{n} \right) + (\lambda_3-\lambda_1)\left(\partial_{xx} S - \frac{\eta}{\widetilde{n}} \partial_x \widetilde{n} \right) \\&
    = (\lambda_3-\lambda_1) \left(\partial_{xx} S - \frac{\eta}{\widetilde{n}} \partial_x \widetilde{n} \right).
\end{align*}
Finally, \eqref{d2theta} can be obtained similarly.
\end{proof}

Next we introduce one  lemma  to transfer the coordinate system from $(\rho, u, S)$ to $(w,z,\hat{S})$.

\begin{lemma} \label{FromrhoUStoRiemannInvariant}
The vector function
\begin{equation}
    \label{Phi}
    \Phi(\rho,u,S) \coloneqq \Big( \frac{z-w}{2}(\rho,u,S), \ \frac{z+w}{c}(\rho,u,S), \ \hat{S}(\rho,u,S) \Big)^\top
\end{equation}
is a diffeomorphism, where $\hat{S} = S$. Moreover, denote $\nabla=(\partial_\rho, \partial_u,\partial_S)^\top$,  one has
\begin{equation}
    \label{JPhi}
    \nabla \Phi(\rho,u,S) = \begin{bmatrix} \partial_\rho F & 0 & \partial_S F \\ 
    0 & \frac{2c}{c^2 - u^2} & 0 \\ 0 & 0 & 1 \end{bmatrix},
\end{equation}
and 
\begin{equation}
    \label{JPhiInverse}
    (\nabla \Phi)^{-1}(\rho,u,S) = \begin{bmatrix} \frac{1}{\partial_\rho F} & 0 & - \frac{\partial_S F}{\partial_\rho F} \\ 
    0 & \frac{c^2 - u^2}{2c} & 0 \\ 0 & 0 & 1 \end{bmatrix}.
\end{equation}
\end{lemma}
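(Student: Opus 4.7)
The plan is to reduce the statement to a direct computation plus a straightforward monotonicity/bijectivity argument, using the explicit formulas for the Riemann variables $w$ and $z$ given earlier. First I would simplify $\Phi$: from the definitions of $w$ and $z$ for the non-isentropic system, one reads off immediately that
\begin{equation*}
\frac{z-w}{2}(\rho,u,S)=\int_{0}^{\rho}\frac{\sqrt{\partial_\sigma P(\sigma,S)}}{\sigma+P(\sigma,S)/c^{2}}\,\mathrm{d}\sigma=F(\rho,S),\qquad \frac{z+w}{c}(\rho,u,S)=\ln\!\left(\frac{c+u}{c-u}\right),
\end{equation*}
so $\Phi$ decouples into a map in $(\rho,S)$, a map in $u$, and the identity on $S$. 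In particular $u$ does not enter the first slot and $\rho$ does not enter the second.

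Next I would compute $\nabla\Phi$ entry by entry. The first row gives $\partial_\rho F$, $0$, $\partial_S F$; the second row uses
$$\partial_u\ln\!\big((c+u)/(c-u)\big)=\frac{1}{c+u}+\frac{1}{c-u}=\frac{2c}{c^{2}-u^{2}},$$
with vanishing $\rho$- and $S$-derivatives; the third row is $(0,0,1)$. This yields the matrix in \eqref{JPhi}. The inverse in \eqref{JPhiInverse} then follows from the block-triangular structure: the determinant equals $\frac{2c\,\partial_\rho F}{c^{2}-u^{2}}$, and a direct check (or cofactor expansion) gives the stated $(\nabla\Phi)^{-1}$, with nontriviality concentrated in the $(1,3)$ entry $-\partial_S F/\partial_\rho F$.

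To conclude that $\Phi$ is a diffeomorphism on the physical domain $\{(\rho,u,S):\rho>0,\ |u|<c,\ \sqrt{\partial_\rho P}<c,\ |S|\le B\}$ where our $C^{1}$ solutions live (cf.\ Theorem \ref{REbound}), I would check that each factor is a global bijection onto its image and then invoke the inverse function theorem to upgrade $\Phi^{-1}$ to $C^{1}$. For the first factor, the identity $\partial_\rho F(\rho,S)=\frac{c^{2}\sqrt{\partial_\rho P(\rho,S)}}{c^{2}\rho+P(\rho,S)}>0$ (from \eqref{F}) makes $\rho\mapsto F(\rho,S)$ strictly increasing for each fixed $S$, so injectivity in $\rho$ holds; for the second factor, $u\mapsto\ln((c+u)/(c-u))$ is a strictly increasing bijection from $(-c,c)$ onto $\mathbb{R}$; the $S$-slot is the identity. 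Combining these with the nonvanishing of $\det\nabla\Phi$ (which uses $\partial_\rho F>0$ and $|u|<c$, both already guaranteed on the admissible state space by Theorem \ref{REbound}), both injectivity and local invertibility hold everywhere, hence $\Phi$ is a global $C^{1}$ diffeomorphism onto its image with Jacobian and inverse Jacobian given by \eqref{JPhi}--\eqref{JPhiInverse}.

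The only mildly delicate point is ensuring that the regularity of $F$ (hence of $\partial_\rho F,\partial_S F$) is high enough to run the inverse function theorem, i.e.\ that $P(\rho,S)$ defined implicitly through \eqref{Prho} is $C^{1}$ on the relevant range; this is inherited from the polytropic structure of the equation of state via the implicit function theorem applied to \eqref{Prho}, which I would invoke from Appendix~A. Once that smoothness is in hand, everything else is bookkeeping on a block-diagonal-plus-one-off-diagonal matrix.
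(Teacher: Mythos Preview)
Your proposal is correct and matches the paper's approach: the paper states only that ``the proof for the above lemma follows from direct calculations, and here we omit it,'' and what you have written is precisely those direct calculations carried out in full (identifying $\Phi$ with $(F(\rho,S),\ln((c+u)/(c-u)),S)^\top$, differentiating entrywise, and inverting the block-triangular Jacobian), together with a clean justification of global bijectivity via monotonicity that the paper leaves implicit.
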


The proof for the above lemma   follows from direct calculations, and here we omit it.

\begin{lemma}
Let $g$ and $h$ be the quantities  defined in \eqref{tidleAlphaBeta}. One has
\begin{equation}
    \label{dzhAnddwg}
    \displaystyle \partial_z h = \frac{\partial_z \lambda_3}{\lambda_3 - \lambda_2}, \quad
    \displaystyle \partial_w g = \frac{\partial_w \lambda_2}{\lambda_2-\lambda_3}.
\end{equation}
\end{lemma}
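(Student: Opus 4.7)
The plan is to verify both identities by direct differentiation of the explicit formulas in \eqref{tidleAlphaBeta}. Three algebraic inputs make the calculation tractable. First, from \eqref{HandG} one sees that $G+H = 2(w+z)/c$, hence $\partial_z G = 2/c - \partial_z H$ (and the symmetric statement for $\partial_w$). Second, the middle term of $h$ factors as
$$e^{G+H} - e^{2H} = e^H(e^G - e^H),$$
so $\frac{1}{2}\ln(e^{G+H}-e^{2H}) = \frac{H}{2} + \frac{1}{2}\ln(e^G-e^H)$. Third, from the definitions in \eqref{HandG} one has $e^{G-H} = \bigl(\tfrac{c+\sqrt{\Lambda}}{c-\sqrt{\Lambda}}\bigr)^2$, which after clearing denominators gives the crucial closed form
$$\frac{e^G}{e^G - e^H} = \frac{(c+\sqrt{\Lambda})^2}{4c\sqrt{\Lambda}}.$$

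Next I would rewrite the right-hand side of the target identity in the same variables. From \eqref{lambdas}, $\lambda_3 = c - 2c/(e^H+1)$, so $\partial_z \lambda_3 = 2c\,e^H\,\partial_z H/(e^H+1)^2$, and a direct subtraction yields
$$\lambda_3-\lambda_2 = \frac{2c(e^H-e^G)}{(e^H+1)(e^G+1)},\qquad \frac{\partial_z \lambda_3}{\lambda_3-\lambda_2} = -\frac{e^H(e^G+1)\,\partial_z H}{(e^H+1)(e^G-e^H)}.$$
This is the compact target that $\partial_z h$ must match.

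The core step is then to differentiate $h$ term by term, applying Leibniz to the integral (its $z$-derivative equals $\tfrac{1}{2}\cdot\frac{(c+\sqrt{\Lambda})^2}{2c^2\sqrt{\Lambda}}$ evaluated at the upper endpoint $(z-w)/2$, with a minus sign in front of $h$). After substituting $\partial_z G = 2/c - \partial_z H$ via the first identity above and collecting the coefficients of $\partial_z H$, a short manipulation gives
$$\partial_z h \;=\; -\frac{e^H(e^G+1)\,\partial_z H}{(e^H+1)(e^G-e^H)} \;+\; \frac{e^G}{c(e^G-e^H)} \;-\; \frac{(c+\sqrt{\Lambda})^2}{4c^2\sqrt{\Lambda}}.$$
The third identity above immediately cancels the last two terms, leaving exactly $\partial_z \lambda_3/(\lambda_3-\lambda_2)$. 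The derivation of $\partial_w g = \partial_w \lambda_2/(\lambda_2-\lambda_3)$ is entirely symmetric: the formula for $g$ is obtained from that for $h$ by the swap $(w,z)\leftrightarrow(z,w)$, $H\leftrightarrow G$, $\lambda_3\leftrightarrow\lambda_2$ (with $(c-\sqrt{\Lambda})$ replacing $(c+\sqrt{\Lambda})$ in the integrand), and the same computation applies verbatim.

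The proof is essentially pure bookkeeping, so the main obstacle is making sure the prefactor $\tfrac{1}{2}$ produced by differentiating the upper limit $(z-w)/2$ matches exactly the prefactor produced by the remaining logarithmic terms, and that the sign coming from $\lambda_3<\lambda_2$ (equivalently $e^H<e^G$) is tracked consistently throughout. No analytic input is needed; everything is a closed-form manipulation of the formulas in \eqref{HandG}, \eqref{lambdas}, and \eqref{tidleAlphaBeta}, so the verification would naturally be placed in Appendix C together with the derivation of $h$ and $g$.
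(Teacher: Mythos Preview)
Your proposal is correct and follows essentially the same route as the paper's Appendix~C: both rely on $G+H=2(w+z)/c$ and on the closed-form identity $e^G/(e^G-e^H)=(c+\sqrt{\Lambda})^2/(4c\sqrt{\Lambda})$ (which the paper records in the equivalent form $1-e^{H-G}=4c\sqrt{\Lambda}/(c+\sqrt{\Lambda})^2$) to make the non-logarithmic pieces cancel. The only cosmetic difference is direction---the paper rewrites $\partial_z\lambda_3/(\lambda_3-\lambda_2)$ as a sum of $\partial_z$-derivatives matching the three terms in the definition of $h$, whereas you differentiate $h$ first and match it to a compact expression for the target---but the underlying algebra is identical.
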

The verification of formulas shown in \eqref{dzhAnddwg}   follows from very complex  calculations, which can be found in Appendix \ref{apphg}.

At last, one can obtain the following decoupled ODEs of $(r,q)$.
\begin{theorem} \label{REDecoupledODEs}
Let $r$ and $q$ be defined in \eqref{yandq}. Then they satisfy the following ODEs
\begin{equation}\label{yuanshiode}
    \begin{cases}
    \partial_3 r = - \partial_w \lambda_3 e^{-h} r^2 + a_3 r + a_4,\\
    \partial_2 q = - \partial_z \lambda_2 e^{-g} q ^2 + b_3 q + b_4,
    \end{cases}
\end{equation}
where the coefficients  $a_i$ and $b_i$ are defined in \eqref{coeffientsA} and \eqref{coeffientsB} respectively.
\end{theorem}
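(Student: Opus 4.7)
The plan is to derive the $r$-equation along the $\lambda_3$-characteristic by differentiating the $w$-equation in \eqref{riPDEs} with respect to $x$, multiplying by the integrating factor $e^h$, and subtracting the correction $L\eta\widetilde{n}$ that is engineered to absorb the inhomogeneities arising from the non-vanishing entropy gradient. The $q$-equation along the $\lambda_2$-characteristic then follows by a symmetric argument, interchanging $w\leftrightarrow z$, $\lambda_2\leftrightarrow\lambda_3$, $\alpha\leftrightarrow\beta$, $h\leftrightarrow g$, $L\leftrightarrow M$, $a\leftrightarrow -a$; so I only describe the $r$ part.

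First I would apply $\partial_x$ to $\partial_3 w = a\,\partial_3 S$ (which is $\eqref{riPDEs}_1$ rewritten in the $\partial_3$-form), and use the commutator $[\partial_x,\partial_3]f = -(\partial_x\lambda_3)\partial_x f$ to obtain
\begin{equation*}
\partial_3\alpha = \partial_x\big(a\,\partial_3 S\big) - (\partial_x\lambda_3)\alpha.
\end{equation*}
Expanding $\partial_x\lambda_3 = \partial_w\lambda_3\,\alpha + \partial_z\lambda_3\,\beta + \partial_{\hat S}\lambda_3\,\eta$ produces the Riccati quadratic $-\partial_w\lambda_3\,\alpha^2$, a cross term $-\partial_z\lambda_3\,\alpha\beta$, and the $\eta$-term $-\partial_{\hat S}\lambda_3\,\alpha\eta$. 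Next I differentiate $\widetilde{\alpha} = \alpha - a\eta$ along $\partial_3$: the computation of $\partial_3 a = \partial_w a\,\partial_3 w + \partial_z a\,\partial_3 z + \partial_{\hat S}a\,\partial_3 S$ uses $\partial_3 w = a\partial_3 S$, while $\partial_3 z = -a\,\partial_2 S + (\lambda_3-\lambda_2)\beta$ follows from $\eqref{riPDEs}_2$ and the relation $\partial_3 = \partial_2 + (\lambda_3-\lambda_2)\partial_x$, with $\partial_i S = (\lambda_i - \lambda_1)\eta$. The term $a\,\partial_3\eta$ is treated by Corollary \ref{REconservedCoro}, which gives $\partial_3\eta = (\eta/\widetilde{n})\partial_3\widetilde{n} + (\lambda_3-\lambda_1)\big(\partial_{xx}S - (\eta/\widetilde{n})\partial_x\widetilde{n}\big)$.

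Next I multiply by $e^h$ and use $\partial_3(e^h\widetilde{\alpha}) = e^h(\partial_3 h)\widetilde{\alpha} + e^h\partial_3\widetilde{\alpha}$ with $\partial_3 h = \partial_w h\,\partial_3 w + \partial_z h\,\partial_3 z + \partial_{\hat S}h\,\partial_3 S$. The key identity is $\partial_z h = \partial_z\lambda_3/(\lambda_3-\lambda_2)$ from \eqref{dzhAnddwg}, which, combined with the $(\lambda_3-\lambda_2)\beta$ piece of $\partial_3 z$, exactly cancels the $-\partial_z\lambda_3\,\alpha\beta$ cross term generated by $\partial_3\alpha$ once $\widetilde{\alpha}$ is traded for $\alpha$ using $\widetilde{\alpha} = \alpha - a\eta$. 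After this cancellation the equation contains only the Riccati quadratic $-\partial_w\lambda_3\,e^h\alpha^2$, terms linear in $\widetilde{\alpha}$ (producing $a_0$), and inhomogeneities of the form $(\text{coefficient})\cdot\eta\widetilde{n}$ and $(\text{coefficient})\cdot(\partial_{xx}S - (\eta/\widetilde{n})\partial_x\widetilde{n})$.

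I would then subtract $\partial_3(L\eta\widetilde{n}) = (\partial_3 L)\eta\widetilde{n} + L(\partial_3\eta)\widetilde{n} + L\eta(\partial_3\widetilde{n})$, writing $L = L(z,w,\hat S)$ and again invoking Corollary \ref{REconservedCoro} to identify $\partial_3\eta - (\eta/\widetilde{n})\partial_3\widetilde{n}$. The crucial observation is that the integral definition of $L$ in \eqref{tidleAlphaBeta} is tailored so that the $z$-derivative of $L\widetilde{n}^2$ produces exactly $e^h\widetilde{n}\,\partial_z a\,(\lambda_1-\lambda_2)/(\lambda_3-\lambda_2)$, which is precisely the coefficient of the leftover $\beta$-linked inhomogeneity coming from $\partial_3 a$ through the $(\lambda_3-\lambda_2)\beta$ piece of $\partial_3 z$. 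After this second cancellation, only the Riccati term, the linear-in-$\widetilde{\alpha}$ terms, and $\eta$-polynomial terms remain. Finally, substituting $e^h\alpha = r + L\eta\widetilde{n} + e^h a\eta$ into $-\partial_w\lambda_3\,e^h\alpha^2 = -\partial_w\lambda_3 e^{-h}(e^h\alpha)^2$ expands the quadratic, yielding $-\partial_w\lambda_3 e^{-h}r^2$, the extra linear-in-$r$ contribution $-2\partial_w\lambda_3 L\widetilde{n}\eta \cdot e^{-h}\cdot r$ absorbed into $a_3$, and the quadratic-in-$\eta$ contribution $-\partial_w\lambda_3 e^{-h}L^2\eta^2\widetilde{n}^2$ absorbed into $a_4$; the remaining linear-in-$\widetilde{\alpha}$ terms organized by $a_0$ convert (via $e^h\widetilde{\alpha} = r + L\eta\widetilde{n}$) into $a_3 r$ together with the $L\eta\widetilde{n}\cdot e^{-h}a_0$ contribution to $a_4$, while the residual inhomogeneities package into $a_1 + a_2$.

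The main obstacle is purely the bookkeeping: verifying that the many cross terms produced by $\partial_3 h$, $\partial_3 a$, and the product-rule expansion of $\partial_3(L\eta\widetilde{n})$ reorganize exactly into the compact coefficients \eqref{coeffientsA}. The two substantive design choices—$\partial_z h = \partial_z\lambda_3/(\lambda_3-\lambda_2)$ for killing the $\beta\alpha$ cross term, and the integral form of $L$ for killing the residual $\eta\beta$ cross term—collapse all the linear-in-derivative obstructions; everything else is a systematic (if lengthy) consolidation, which is why it is relegated to \eqref{coeffientsA}--\eqref{coeffientsB} without a separate lemma.
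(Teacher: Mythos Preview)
Your outline is correct and matches the paper's strategy---differentiate along the characteristic, use the integrating factor via \eqref{dzhAnddwg} to kill the cross term, and subtract the $L\eta\widetilde{n}$ correction to absorb the remaining coupling; the paper carries this out for $q$ and organizes the computation around the clean intermediate identity $\partial_2\widetilde{\beta} = -\partial_x\lambda_2\,\widetilde{\beta} + \eta\,\partial_1 a$ (so the $a\,\partial_2\eta$ term you plan to handle via Corollary \ref{REconservedCoro} in fact cancels outright), then routes the leftover $\widetilde{\alpha}$-dependence through $\partial_2\widetilde{n}$ via $\partial_2 w = \partial_2\widetilde{n}/\partial_w\widetilde{n} - \cdots$, which is why the coefficients in \eqref{coeffientsB} carry the $\partial_z\widetilde{n}/\partial_w\widetilde{n}$ ratios. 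One small slip: the Riccati quadratic is in $\widetilde{\alpha}$, not $\alpha$, so the final substitution is $e^h\widetilde{\alpha} = r + L\eta\widetilde{n}$ rather than $e^h\alpha = r + L\eta\widetilde{n} + e^h a\eta$.
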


\begin{proof}
We will focus only on $q$, and  $r$ can be dealt with similarly. 

First, differentiating  $\eqref{riPDEs}_2$ with respect to $x$, along with  $\eqref{releul1D}_3$, one can get 
\begin{align*}
    \partial_t \beta + \partial_x \lambda_2 \beta + \lambda_2 \partial_x \beta &= -\partial_x a (\lambda_2 - u)\eta - a (\partial_t \eta + \lambda_2 \partial_x \eta + \partial_x \lambda_2 \eta) \\
    &= -\partial_x a (\lambda_2 - u)\eta - a \partial_2 \eta - a \partial_x \lambda_2 \eta,
\end{align*}
which yields that 
\begin{equation*}
    \partial_2 \beta = - \partial_x \lambda_2 (\beta + a \eta) - \partial_x a (\lambda_2 - u) \eta - a \partial_2 \eta.
\end{equation*}
Then it follows from direct calculations and the definition of  $\widetilde{\beta}$ that 
\begin{align}
    \label{d2tidlebeta1}
    \partial_2 \widetilde{\beta} & = \partial_2 \beta + \partial_2(a\eta) \notag \\ &
    = - \partial_x \lambda_2 (\beta + a \eta) - \partial_x a (\lambda_2 - u) \eta - a \partial_2 \eta + \partial_2 a \eta + a \partial_2 \eta\\&
    = - \partial_x \lambda_2 \widetilde{\beta} + \eta \partial_1 a. \notag
\end{align}

Considering  the coefficients of the ODE $\eqref{yuanshiode}_2$ for $q$, namely $b_3$ and $b_4$, one observes  that they do not contain terms: derivatives of some quantities   with respect to $x$ or $t$, except for the conserved quantities listed in Corollary \ref{REconservedCoro} and $\partial_2 z$. Note by \eqref{riPDEs} and $\eqref{releul1D}_3$, $\partial_2 z$ can be expressed in terms of the conserved quantities as well. Then in order to derive $\eqref{yuanshiode}_2$ , one has to express $\partial_x \lambda_2$ and $\partial_1 a$ in terms of total derivatives with respect to $\partial_2$, as well as the terms in $b_3$ and $b_4$. For this purpose, the rest of this proof is divided into 3 steps: the reformulations of  $\partial_x \lambda_2$,    $\partial_1 a$ and  $\partial_2 \widetilde{n}$.

Step 1: the reformulation of  $\partial_x \lambda_2$. According to \eqref{tidleAlphaBeta}, one has 
\begin{align}
    \label{dxlambda2}
    \partial_x \lambda_2 &= \partial_w \lambda_2 \partial_x w + \partial_z \lambda_2 \partial_x z + \partial_{\hat{S}} \lambda_2 \partial_x \hat{S} \notag\\&
    =\partial_w \lambda_2 (\widetilde{\alpha} + a\eta) + \partial_z \lambda_2 (\widetilde{\beta} - a\eta) + \partial_{\hat{S}} \lambda_2 \partial_x \hat{S} \\&
    = \partial_w \lambda_2 \widetilde{\alpha} + \partial_z \lambda_2 \widetilde{\beta} + \eta(\partial_w \lambda_2 a -\partial_z \lambda_2 a +\partial_{\hat{S}} \lambda_2). \notag
\end{align}

Now one has to rewrite $\widetilde{\alpha}$. According to  $\eqref{riPDEs}_1$, one can get that
\begin{align*}
    & \partial_t w + \lambda_2 \partial_x w + (\lambda_3 -\lambda_2) \partial_x w  = a \partial_3 S, 
\end{align*}
which,  implies that
\begin{equation}\label{baditusida}
    \partial_x w = \frac{-\partial_2 w + a \partial_3 S}{\lambda_3-\lambda_2}.
\end{equation}
Then according to  $\eqref{tidleAlphaBeta}$ and \eqref{baditusida}, one has 
\begin{equation}
    \label{tidlealpha}
    \widetilde{\alpha} = - \frac{\partial_2 w}{\lambda_3 - \lambda_2} + \frac{a \partial_3 S}{\lambda_3 - \lambda_2} - a \eta.
\end{equation}
Substituting \eqref{tidlealpha} into \eqref{dxlambda2}, one gets that
\begin{equation}
    \label{dxlambda2again}
    \begin{split}
    \partial_x \lambda_2 = & -\partial_w \lambda_2 \frac{\partial_2 w}{\lambda_3 - \lambda_2} + \partial_w \lambda_2 \frac{a \partial_3 S}{\lambda_3 - \lambda_2} - \partial_w \lambda_2 a \eta \\ & + \partial_z \lambda_2 \widetilde{\beta} + \eta(\partial_w \lambda_2 a -\partial_z \lambda_2 a +\partial_{\hat{S}} \lambda_2).
    \end{split}
\end{equation}
According to  \eqref{dzhAnddwg} and $\eqref{riPDEs}_2$,  one has
\begin{align}
    \label{d2g}
    \partial_2 g &= \frac{\partial_w \lambda_2}{\lambda_2 - \lambda_3} \partial_2 w + \partial_z g \partial_2 z + \partial_{\hat{S}} g \partial_2 \hat{S} \notag \\&
    = \frac{\partial_w \lambda_2}{\lambda_2 - \lambda_3} \partial_2 w - a \partial_z g \partial_2 S + \partial_{\hat{S}} g \partial_2 S, 
\end{align}
which, together with \eqref{dxlambda2again}, implies that 
\begin{equation}
    \label{dxlambda2final}
    \begin{split}
    \partial_x \lambda_2 = & \partial_2 g + a \partial_z g \partial_2 S - \partial_{\hat{S}} g \partial_2 S + \partial_w \lambda_2 \frac{a \partial_3 S}{\lambda_3 - \lambda_2} \\ & - \partial_w \lambda_2 a \eta  + \partial_z \lambda_2 \widetilde{\beta}  + \eta(\partial_w \lambda_2 a -\partial_z \lambda_2 a +\partial_{\hat{S}} \lambda_2).
    \end{split}
\end{equation}
Substituting \eqref{dxlambda2final} into \eqref{d2tidlebeta1}, one gets that 
\begin{equation*}
    \begin{split}
    \partial_2 \widetilde{\beta} = & -\partial_2 g \widetilde{\beta} - (a\partial_z g - \partial_{\hat{S}} g) \partial_2 S \widetilde{\beta} \\ & - \partial_w \lambda_2 \frac{a \partial_3 S}{\lambda_3 - \lambda_2} \widetilde{\beta} - \partial_z \lambda_2 \widetilde{\beta}^2 + \widetilde{\beta}\eta(\partial_z \lambda_2 a -\partial_{\hat{S}} \lambda_2) + \eta \partial_1 a.
    \end{split}
\end{equation*}
Multiplying the above equation on  both sides by $e^g$, one has 
\begin{equation}
    \label{d2tidlebeta2}
    \partial_2 \left(\widetilde{\beta} e^g\right) = - \partial_z \lambda_2 e^g  \widetilde{\beta}^2 + \left[b_0 - e^g \eta \partial_z a (\lambda_1 - \lambda_2)\right] \widetilde{\beta} +e^g \eta \partial_1 a.
\end{equation}

Step 2: the reformulation of   $\partial_1 a$. According to \eqref{riPDEs}, one has
\begin{align}
    \label{d1a}
    \partial_1 a &= \partial_w a \partial_1 w + \partial_z a \partial_1 z + \partial_{\hat{S}} a \partial_1 \hat{S} \notag \\&
    = \partial_w a [\partial_3 w + (\lambda_1-\lambda_3) \partial_x w]+\partial_z a [\partial_2 z + (\lambda_1-\lambda_2) \partial_x z] \notag \\&
    = \partial_w a [a \partial_3 S + (\lambda_1-\lambda_3) \partial_x w]+\partial_z a [-a\partial_2 S + (\lambda_1-\lambda_2) \partial_x z] \\&
    = \partial_w a [ a (\lambda_3-\lambda_1) \eta + (\lambda_1-\lambda_3) \partial_x w]+\partial_z a [-a (\lambda_2-\lambda_1) \eta + (\lambda_1-\lambda_2) \partial_x z] \notag \\&
    = (\lambda_1-\lambda_3)\partial_w a \widetilde{\alpha} + (\lambda_1-\lambda_2) \partial_z a \widetilde{\beta}. \notag
\end{align}
It follows from direct calculations that 
\begin{equation*}
    \partial_2 \widetilde{n} = \partial_z \widetilde{n} \partial_2 z + \partial_w \widetilde{n} \partial_2 w + \partial_{\hat{S}} \widetilde{n} \partial_2 \hat{S},
\end{equation*}
which implies that 
\begin{equation}
    \label{d2wbyn}
    \partial_2 w  = \frac{\partial_2 \widetilde{n}}{\partial_w \widetilde{n}} -\frac{\partial_z \widetilde{n}}{\partial_w \widetilde{n}} \partial_2 z -\frac{\partial_{\hat{S}} \widetilde{n}}{\partial_w \widetilde{n}} \partial_2 S.
\end{equation}
Substituting \eqref{d2wbyn} into \eqref{tidlealpha}, one gets that 
\begin{equation}
    \label{tidlealpha2}
    \widetilde{\alpha} = - \frac{1}{\lambda_3 - \lambda_2} \left[\frac{\partial_2 \widetilde{n}}{\partial_w \widetilde{n}} -\frac{\partial_z \widetilde{n}}{\partial_w \widetilde{n}} \partial_2 z -\frac{\partial_{\hat{S}} \widetilde{n}}{\partial_w \widetilde{n}} \partial_2 S\right]+ \frac{a \partial_3 S}{\lambda_3 - \lambda_2} - a \eta,
\end{equation}
which, along with  \eqref{d1a}, yields that 
\begin{align*}
    e^g \eta
    \partial_1 a &= - \frac{(\lambda_1-\lambda_3) \partial_w a}{(\lambda_3 - \lambda_2) \partial_w \widetilde{n}} e^g \eta \partial_2 \widetilde{n} + b_1 +  e^g \eta (\lambda_1-\lambda_2) \partial_z a \widetilde{\beta}.
\end{align*}
Then according to the above relation and  \eqref{d2tidlebeta2}, one has 
\begin{equation}
    \label{d2tidlebeta3}
    \partial_2 \left(\widetilde{\beta} e^g\right) = -\partial_z \lambda_2 e^g \widetilde{\beta}^2 + b_0  \widetilde{\beta}  + b_1 - \frac{(\lambda_1-\lambda_3) \partial_w a}{(\lambda_3 - \lambda_2) \partial_w \widetilde{n}} e^g \eta \partial_2 \widetilde{n}.
\end{equation}

Step 3: the  reformulation of  $\partial_2 \widetilde{n}$.
Considering the  function $M(w,z,\hat{S})$  defined in \eqref{tidleAlphaBeta}, it follows from direct calculations and  \eqref{d2wbyn} that 
\begin{align}
    \label{d2Mntheta}
    \partial_2 (M\widetilde{n}\eta) &= \partial_w M \partial_2 w \widetilde{n}\eta +\partial_z M \partial_2 z \widetilde{n}\eta + \partial_{\hat{S}} M \partial_2 \hat{S} \widetilde{n}\eta + M \partial_2 \widetilde{n}\eta + M \widetilde{n} \partial_2 \eta \notag \\&
    =  \partial_w M \widetilde{n}\eta \frac{\partial_2 \widetilde{n}}{\partial_w \widetilde{n}} -
    \partial_w M \widetilde{n}\eta \frac{\partial_z \widetilde{n}}{\partial_w \widetilde{n}} \partial_2 z -
    \partial_w M \widetilde{n}\eta \frac{\partial_{\hat{S}} \widetilde{n}}{\partial_w \widetilde{n}} \partial_2 S + \partial_z M \partial_2 z \widetilde{n}\eta \\& \quad + \partial_{\hat{S}} M \partial_2 \hat{S} \widetilde{n}\eta + M \widetilde{n} \left(\partial_2 \eta - \frac{\eta}{\widetilde{n}}\partial_2 \widetilde{n}\right) + 2M\eta \partial_2 \widetilde{n} .\notag
\end{align}

One can check that, actually the function $M$ satisfies:
\begin{align*}
 \partial_w M \widetilde{n} + 2M \partial_w \widetilde{n} = - \frac{(\lambda_1-\lambda_3) \partial_w a}{(\lambda_3 - \lambda_2) } e^g.
\end{align*}
Multiplying the above equation by $\widetilde{n}$, one has that
\begin{equation*}
    \partial_w \left(M\widetilde{n}^2\right) = - \frac{(\lambda_1-\lambda_3) \partial_w a}{(\lambda_3 - \lambda_2) } e^g \widetilde{n},
\end{equation*}
which, along with  \eqref{d2Mntheta}, implies that 
\begin{equation*}
    \begin{split}
    - \frac{(\lambda_1-\lambda_3) \partial_w a}{(\lambda_3 - \lambda_2) \partial_w \widetilde{n}} e^g \eta \partial_2 \widetilde{n} = & \partial_2 (M\widetilde{n}\eta) + \partial_w M \widetilde{n}\eta \frac{\partial_z \widetilde{n}}{\partial_w \widetilde{n}} \partial_2 z +
    \partial_w M \widetilde{n}\eta \frac{\partial_{\hat{S}} \widetilde{n}}{\partial_w \widetilde{n}} \partial_2 S \\ & - \partial_z M \partial_2 z \widetilde{n}\eta  - \partial_{\hat{S}} M \partial_2 \hat{S} \widetilde{n}\eta - M \widetilde{n} \left(\partial_2 \eta - \frac{\eta}{\widetilde{n}}\partial_2 \widetilde{n}\right).
    \end{split}
\end{equation*}
Substituting the above relation into \eqref{d2tidlebeta3}, one has that
\begin{equation}
    \label{d2tidlebeta4}
    \partial_2 \left(\widetilde{\beta} e^g - M\widetilde{n}\eta\right) = -\partial_z \lambda_2 e^g \widetilde{\beta}^2 + b_0  \widetilde{\beta}  + b_1 + b_2.
\end{equation}
Notice that 
\begin{equation*}
    \widetilde{\beta} = e^{-g}q + e^{-g} M \widetilde{n}\eta \quad \text{and} \quad 
    \widetilde{\beta}^2 = e^{-2g}q^2 + 2e^{-2g}q M \widetilde{n}\eta + e^{-2g} M^2 \widetilde{n}^2\eta^2.
\end{equation*}
which, along with \eqref{d2tidlebeta4}, yields the desired ODE for $q$.

\end{proof}

\subsection{Decoupled ODE inequalities}

    The main purpose  of this section is to derive some decoupled   ODE inequalities on $(r,q)$ with good structures from \eqref{yuanshiode}, which play a key role on the estimate of the upper bound for  $(r,q)$. 

\begin{theorem} \label{ODEcompare}
Let $N_1$ and $N_2$ be the two constants introduced in  Theorem \ref{GradientBlowUp}. The following ODE inequalities hold:
\begin{equation}
    \label{yandqinequality}
    \begin{cases}
    \partial_3 r \leq - \frac{1}{2} \partial_w \lambda_3 e^{-h}(r^2 - N_{1}^2), \\
    \partial_2 q \leq - \frac{1}{2} \partial_z \lambda_2 e^{-g}(q^2 - N_{2}^2).
    \end{cases}
\end{equation}
\end{theorem}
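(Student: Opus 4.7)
The plan is a direct completion-of-squares argument applied to the two non-homogeneous Riccati ODEs produced in Theorem \ref{REDecoupledODEs}, exploiting the fact that the constants $N_1, N_2$ are tailored precisely to absorb the lower-order inhomogeneous terms $a_3 r + a_4$ and $b_3 q + b_4$. I would focus first on the equation $\partial_3 r = -A r^2 + a_3 r + a_4$ with $A := \partial_w \lambda_3\, e^{-h}$, as the estimate for $q$ is mechanically identical under the substitution $A \mapsto B := \partial_z \lambda_2\, e^{-g}$ and the obvious relabeling.

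On the slab of existence, Proposition \ref{EntropyBound} together with Lemma \ref{RERiemannInvairantBound} and Assumption \ref{REAssumpInitialData} places the triple $(w,z,S)(t,x)$ inside the compact set $\mathcal N$ for every $(t,x)$. Assuming the positivity $A>0$ on $\mathcal N$ (discussed below), I would split the quadratic term as $-Ar^2 = -\tfrac{A}{2}r^2 -\tfrac{A}{2}r^2$ and apply Young's inequality to the linear term in the form $|a_3 r| \le \tfrac{A}{2} r^2 + \tfrac{a_3^2}{2A}$. This reduces the ODE to
\begin{equation*}
\partial_3 r \;\le\; -\tfrac{A}{2} r^2 + \tfrac{a_3^2}{2A} + |a_4| \;=\; -\tfrac{A}{2}\bigl(r^2 - \widetilde N_1(t,x)^2\bigr),
\end{equation*}
where $\widetilde N_1(t,x)^2 := \tfrac{a_3^2}{A^2} + \tfrac{2|a_4|}{A}$ is exactly the integrand whose supremum over $\mathcal N$ defines $N_1^2$ in Theorem \ref{GradientBlowUp}. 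Since $(w,z,S)(t,x) \in \mathcal N$, we get the pointwise bound $\widetilde N_1(t,x) \le N_1$, and hence $\partial_3 r \le -\tfrac{A}{2}(r^2 - N_1^2)$, which is the first inequality of \eqref{yandqinequality}. The same three-line computation, applied to the second ODE of Theorem \ref{REDecoupledODEs} with $B$ in place of $A$ and Young's inequality used on $b_3 q$, yields the second inequality.

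The algebra itself is elementary; the only non-mechanical point, and the main place to be careful, is the positivity of $\partial_w \lambda_3$ and $\partial_z \lambda_2$ on $\mathcal N$. I would verify this in advance by differentiating the explicit expressions \eqref{lambdas}--\eqref{HandG}: using $\lambda_3 = c\bigl(1-\tfrac{2}{e^{H}+1}\bigr)$ one reduces the sign of $\partial_w \lambda_3$ to that of $\partial_w H$, which in turn is controlled through the chain rule by $\partial_\rho P$ and $P_{\rho\rho} > 0$ guaranteed by Proposition \ref{AProp_drhoPetc}; an analogous computation handles $\partial_z \lambda_2$. This is also implicit in the fact that the square roots in the definitions of $N_1,N_2$ are assumed to exist as real numbers, so no new structural input is actually required beyond what the earlier sections provide.
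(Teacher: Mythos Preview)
Your proposal is correct and follows essentially the same approach as the paper: split the leading quadratic term in half, apply Young's inequality $|a_3 r| \le \tfrac{A}{2}r^2 + \tfrac{a_3^2}{2A}$ (the paper does the identical step for $b_3 q$), and then bound the resulting inhomogeneity by the defining supremum $N_1^2$ (resp.\ $N_2^2$). The paper additionally invokes Lemmas \ref{b_3bound}--\ref{sumaozhen} at this point, but their role is precisely to show that $N_1,N_2$ are finite, which you reasonably take as given from their statement as constants in Theorem \ref{GradientBlowUp}; your handling of the positivity $A,B>0$ via $\partial_w H,\partial_z G>0$ matches the paper's Corollary \ref{e^-gdzlambda_2>0}.
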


  For simplicity, in the rest of this section, we denote  $D\geq 1$  a generic  constant depending only on $(z_0,w_0,S_0)$, $R$, $\gamma$ and $c$, and   independent of the time. Next, a definition and some additional lemmas will be given as follows.

\begin{definition}\label{Defintion4.1}

Let $f $ and $g $ be two functions. We say  $f \approx g$ if
    $D^{-1}f \leq  g  \leq D f$;
 and  we say $f \lesssim g$ if  $f \leq D g$.
\end{definition}

\begin{lemma} \label{napproxrho}
\begin{equation}
    \label{n=rho}
    \widetilde{n} \approx n \approx \rho.
\end{equation}
Furthermore, there exists a fixed positive constant $n_{\max}$ such that $0 < n \leq n_{\max}$.
\end{lemma}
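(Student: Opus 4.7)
The plan is to reduce everything to a few elementary algebraic relations coming from the equation of state, and then quote the uniform bounds on $(\rho,u,S)$ established in Theorem \ref{REbound} to extract two-sided inequalities between the three quantities. The key identities are those derived in Section 2: from \eqref{formula-5} one has $\rho = n + P/(c^2(\gamma-1))$, from \eqref{PnS} one has $P = (\gamma-1) n^\gamma \exp(S/C_v)$, and by definition $\widetilde n = cn/\sqrt{c^2-u^2}$.

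First I would establish the uniform upper bound on $\rho$. This is essentially \eqref{rhobound}: since Lemma \ref{RERiemannInvairantBound} controls $(w,z)$ uniformly on $[0,T)\times\mathbb R$ and $\psi$ is strictly increasing by Proposition \ref{PropG}, one gets $\rho \le \psi^{-1}\bigl((\max|z|+\max|w|)/2\bigr)$. Combining this with Proposition \ref{EntropyBound} gives uniform bounds on $(\rho,S)$, so in particular on $n^{\gamma-1}\exp(S/C_v)$.

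Next, since $P\ge 0$, the identity $\rho - n = P/(c^2(\gamma-1))$ immediately yields $n\le\rho$, and hence $n$ is bounded above by the same constant as $\rho$; this already gives the existence of the constant $n_{\max}$ in the statement. To prove $\rho\approx n$ I would compute
\begin{equation*}
\frac{\rho}{n}= 1 + \frac{n^{\gamma-1}\exp(S/C_v)}{c^2},
\end{equation*}
which lies in $[1,D]$ for some $D$ depending only on the uniform upper bounds on $n$ and $|S|$, hence only on the initial data and the parameters $(c,\gamma,R)$. For $\widetilde n\approx n$ the relation $\widetilde n/n = c/\sqrt{c^2-u^2}$ lies in $[1, c/\sqrt{c^2-u_\ast^2}]$, where $u_\ast$ is the uniform upper bound on $|u|$ guaranteed by the definition of $T^\ast=T$ in Theorem \ref{REbound}.

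Essentially everything is routine manipulation of the equation of state once the uniform bounds of Section 4.1 are in hand; the only point requiring care is that every constant must be made explicit in the initial data so that $D$ in Definition \ref{Defintion4.1} is genuinely independent of time. The real ``obstacle'' is therefore purely bookkeeping: I need $|u|<c$ uniformly (not merely pointwise), so I must invoke Theorem \ref{REbound} which upgrades the pointwise bound $|u|<c$ to a uniform-in-$(t,x)$ gap $c-|u|\ge C_0>0$, preventing $\sqrt{c^2-u^2}$ from degenerating. With that in place the chain $\widetilde n\approx n\approx\rho$ and $n\le n_{\max}$ follows directly.
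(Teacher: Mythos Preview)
Your proposal is correct and follows essentially the same route as the paper: both arguments use Theorem \ref{REbound} (and its proof) to secure the uniform bounds on $\rho$, $|S|$, and the uniform gap $c-|u|\ge C_0>0$, then combine $\widetilde n/n=c/\sqrt{c^2-u^2}$ with the identity $\rho/n=1+n^{\gamma-1}e^{S/C_v}/c^2$ (equivalently the paper's \eqref{ngongshi}) to obtain two-sided bounds. Your write-up is in fact a bit more explicit than the paper's, which simply cites Proposition \ref{AProp_nandrho} and Theorem \ref{REbound} for the same conclusions.
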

\begin{proof}
It follows from  $n > 0$, \eqref{tidleAlphaBeta},  Theorem \ref{REbound} and its proof  that  $n \approx \widetilde{n}$. It remains   to show  $n \approx \rho$.  Actually, according to  Proposition \ref{AProp_nandrho}, one has
\begin{equation}\label{ngongshi}
    n = \frac{c^2 \rho}{n^{\gamma-1} \exp\left(\frac{S}{C_v}\right) + c^2},
\end{equation}
where one regards $n$ as a continuous function of $\rho$ and $S$.

According to  Theorem \ref{REbound},  there is a positive constant $n_{\max}$ such that  $0 < n \leq n_{\max}$, which, along with \eqref{ngongshi}, yields $n \approx \rho$.
\end{proof}
According to Lemma 4.5, one can obtain that 
\begin{corollary} \label{approxforPetc}
\begin{equation}
    \label{P=rho^gamma}
    P(\rho,S) \approx \rho^\gamma,\quad
    \partial_\rho P(\rho,S) \approx \rho^{\gamma-1}, \quad \partial_S P(\rho,S) \approx \rho^{\gamma},
\end{equation}
\begin{equation}
    \label{dSP=rho^gamma}
    \partial_{\rho S} P(\rho,S) \approx \rho^{\gamma-1},\quad 
    \partial_{\rho \rho} P(\rho,S) \approx \rho^{\gamma-2},\quad F(\rho,S) \approx \rho^{\frac{\gamma-1}{2}}, 
\end{equation}
\begin{equation}
    \label{dSFleq}
    \partial_{\rho} F(\rho,S) \approx \rho^{\frac{\gamma-3}{2}}, \quad
    |\partial_{S} F(\rho,S)| \lesssim \rho^{\frac{\gamma-1}{2}} + \rho^{\frac{3\gamma-3}{2}},\quad 
    |\partial_{\rho S} F(\rho,S)| \lesssim \rho^{\frac{\gamma-3}{2}} + \rho^{\frac{3\gamma-5}{2}},
\end{equation}
\begin{equation}
    \label{aleq}
    |a(\rho,S)| \lesssim \rho^{\frac{\gamma-1}{2}},\quad 
    |\partial_{\rho} a(\rho,S)| \lesssim \rho^{\frac{\gamma-3}{2}} + \rho^{\frac{3\gamma-5}{2}} + \rho^{\frac{\gamma-1}{2}}.
\end{equation}
\end{corollary}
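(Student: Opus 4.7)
The strategy is to reduce every quantity in the corollary to the rest-mass density $n$, which satisfies $n \approx \rho$ by Lemma~\ref{napproxrho}, together with the implicit relation $\rho = n + \frac{n^{\gamma}\exp(S/C_{v})}{c^{2}}$ obtained by combining \eqref{formula-5} with \eqref{PnS}. Implicit differentiation in $(\rho,S)$ gives
\begin{equation*}
\partial_{\rho}n = \Bigl(1 + \tfrac{\gamma n^{\gamma-1}\exp(S/C_{v})}{c^{2}}\Bigr)^{-1}, \qquad
\partial_{S}n = -\Bigl(1 + \tfrac{\gamma n^{\gamma-1}\exp(S/C_{v})}{c^{2}}\Bigr)^{-1}\tfrac{n^{\gamma}\exp(S/C_{v})}{c^{2}C_{v}}.
\end{equation*}
Since $n\in(0,n_{\max}]$ by Lemma~\ref{napproxrho} and $|S|\le B$ by Proposition~\ref{EntropyBound}, the denominator is bounded between two positive constants, so $\partial_{\rho}n\approx 1$ and $|\partial_{S}n|\lesssim n^{\gamma}\approx\rho^{\gamma}$. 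One further round of implicit differentiation produces analogous uniform bounds on $\partial_{\rho\rho}n$, $\partial_{\rho S}n$ and $\partial_{SS}n$.

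Plugging these into the constitutive law $P=(\gamma-1)n^{\gamma}\exp(S/C_{v})$ immediately delivers \eqref{P=rho^gamma}: $P$ is controlled by $n^{\gamma}\exp(S/C_{v})$; $\partial_{\rho}P$ carries the extra factor $\partial_{\rho}n\approx 1$; and for $\partial_{S}P$ the two contributions $\gamma n^{\gamma-1}\exp(S/C_{v})\partial_{S}n$ and $n^{\gamma}\exp(S/C_{v})/C_{v}$ combine, after substituting $\partial_{S}n$, to $\frac{(\gamma-1)n^{\gamma}\exp(S/C_{v})/C_{v}}{1+\gamma n^{\gamma-1}\exp(S/C_{v})/c^{2}}\approx\rho^{\gamma}$. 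The mixed and second-order derivatives in \eqref{dSP=rho^gamma} follow from another direct differentiation; every resulting term is a product of $n^{\alpha}\exp(S/C_{v})$ with the already-bounded implicit derivatives of $n$, so the claimed power laws drop out.

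For $F(\rho,S)$ I would argue directly from the integral definition \eqref{F}. Using $\partial_{\sigma}P\approx\sigma^{\gamma-1}$ and $c^{2}\sigma+P\approx\sigma$, the integrand satisfies $c^{2}\sqrt{\partial_{\sigma}P}/(c^{2}\sigma+P)\approx\sigma^{(\gamma-3)/2}$; since $\gamma>1$ this is integrable near $0$, yielding $F\approx\rho^{(\gamma-1)/2}$ and $\partial_{\rho}F\approx\rho^{(\gamma-3)/2}$ via the fundamental theorem of calculus. For $\partial_{S}F$ and $\partial_{\rho S}F$, I would differentiate under the integral sign (respectively, differentiate the pointwise integrand): exactly two terms appear, one of order $\rho^{(\gamma-1)/2}$ (respectively, $\rho^{(\gamma-3)/2}$) coming from $\partial_{\sigma S}P$ in the numerator, and one of order $\rho^{(3\gamma-3)/2}$ (respectively, $\rho^{(3\gamma-5)/2}$) from the $\partial_{S}P/(c^{2}\sigma+P)^{2}$ piece, matching \eqref{dSFleq}.

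Finally I would feed these bounds into the closed form $a=-\partial_{S}F+(\partial_{S}P/\partial_{\rho}P)\partial_{\rho}F$ from \eqref{a}. The second summand is $\approx\rho\cdot\rho^{(\gamma-3)/2}=\rho^{(\gamma-1)/2}$, while the first is $\lesssim\rho^{(\gamma-1)/2}+\rho^{(3\gamma-3)/2}$; since $\rho$ is bounded above by Theorem~\ref{REbound} and $\gamma\ge 1$, the higher exponent is absorbed into $\rho^{(\gamma-1)/2}$, giving the first bound in \eqref{aleq}. One additional differentiation in $\rho$, combined with the previously established estimates, produces the three exponents in the bound for $\partial_{\rho}a$. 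The main obstacle is not conceptual but organisational: the implicit second derivatives of $n$ generate many contributions to $\partial_{\rho\rho}P$, $\partial_{\rho S}P$ and to $\partial_{\rho}a$, and one must track each of them carefully to confirm that none exceeds the claimed power laws once the uniform bounds on $n$ and $S$ are applied.
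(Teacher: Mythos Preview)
Your proposal is correct and follows essentially the same route as the paper: reduce everything to $n$ via Lemma~\ref{napproxrho} and the constitutive relation, read off the power laws for $P$ and its derivatives, then feed these into the integral formula \eqref{F} for $F$ and the closed form \eqref{a} for $a$. The only cosmetic difference is that the paper has already packaged the explicit formulas for $\partial_\rho P$, $\partial_S P$, $\partial_{\rho S}P$, $\partial_{\rho\rho}P$ (and the implicit derivatives $\partial_\rho n$, $\partial_S n$) into Proposition~\ref{AProp_drhoPetc} in the appendix, so its proof simply cites that proposition together with Lemma~\ref{napproxrho}, Proposition~\ref{EntropyBound} and Theorem~\ref{REbound}, whereas you re-derive those formulas inline.
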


\begin{proof}
First, the estimates on $P$, $\partial_\rho P$, $\partial_S P$, $\partial_{\rho S} P$ and  $\partial_{\rho \rho} P$ follow from \eqref{PnS}, Lemma \ref{napproxrho}, Theorem \ref{REbound},  Propositions \ref{EntropyBound}  and  \ref{AProp_drhoPetc}. 

Second, according to  the definition of $F(\rho, S)$ in  \eqref{F}, one has
\begin{equation}\label{xierwa}
    \partial_\rho F = \frac{c^2\sqrt{\partial_\rho P}}{P+\rho c^2},\quad \partial_S F = \int_{0}^{\rho} \frac{c^2\partial_{\sigma S} P}{2\sqrt{\partial_\sigma P}(P+\sigma c^2)} -\frac{c^2\partial_{S} P \sqrt{\partial_\sigma P}}{(P+\sigma c^2)^2} \text{d}\sigma,
\end{equation}
which, along with the estimates on $P$ obtained above, implies the desired estimates on  $F$, $ \partial_\rho F$ and $\partial_S F$.
The term $\partial_{\rho S}F$ can be dealt with  similarly.

Finally, it follows from  \eqref{a}  that
\begin{equation*}
    \partial_\rho a = - \partial_{\rho S}F - \frac{\partial_S P c^2 (\partial_\rho P +c^2)}{(P+\rho c^2)^2 \sqrt{\partial_\rho P}} + \frac{c^2}{P+\rho c^2 } \left(\frac{2\partial_\rho P \partial_{\rho S}P- \partial_{\rho S}P \partial_S P}{2\sqrt{\partial_\rho P}\partial_\rho P} \right).
\end{equation*}
Then one can obtain the desired estimates on $a$ and $ \partial_\rho a$  based on the estimates associated with   $P$ and $F$ obtained above.
\end{proof}

Next we consider the relation between $(g,h)$ and $\rho$.
\begin{lemma} \label{LemmaApproxhandg}
\begin{equation}
    \label{handgapprox}
    e^{-g} \approx e^{-h} \approx \frac{\sqrt{\rho}}{(\partial_\rho P)^\frac{1}{4}}.
\end{equation}
\end{lemma}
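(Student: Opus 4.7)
The plan is to derive an explicit algebraic expression for $e^{-h}$ (and similarly $e^{-g}$) from the definitions in \eqref{tidleAlphaBeta}, then separate out the $\rho$-dependent factor $(\partial_\rho P)^{1/4}$ and show that the exponential of the integral contributes precisely the $\sqrt{\rho}$ factor.

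First, starting from the definitions \eqref{HandG}, one has $e^{H} = (c+\lambda_3)/(c-\lambda_3)$, $e^{G} = (c-\lambda_2)/(c+\lambda_2)$, and the key identity $e^{G-H} = \left(\frac{c+\sqrt{\Lambda}}{c-\sqrt{\Lambda}}\right)^{2}$. A direct algebraic computation then gives
\begin{equation*}
e^{G+H}-e^{2H} = e^{2H}\,\frac{4c\sqrt{\Lambda}}{(c-\sqrt{\Lambda})^{2}}, \qquad e^{2G}-e^{G+H} = e^{2G}\,\frac{4c\sqrt{\Lambda}}{(c+\sqrt{\Lambda})^{2}},
\end{equation*}
which, upon substitution into the definition of $h$ and $g$, yields
\begin{equation*}
e^{-h} = \frac{\sqrt{c}\,(c-\sqrt{\Lambda})}{\Lambda^{1/4}(c+\lambda_{3})}\exp(I_{h}), \qquad e^{-g} = \frac{\sqrt{c}\,(c+\sqrt{\Lambda})}{\Lambda^{1/4}(c-\lambda_{2})}\exp(I_{g}),
\end{equation*}
where $I_{h}$ and $I_{g}$ are the respective integrals. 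By Theorem \ref{REbound}, $\sqrt{\Lambda}=\sqrt{\partial_{\rho}P}<c$ uniformly and $|\lambda_{2}|,|\lambda_{3}|<c$ uniformly, so the prefactors $(c\pm\sqrt{\Lambda})$ and $(c\pm\lambda_{i})$ are comparable to positive constants depending only on the initial data. Thus it suffices to prove $\exp(I_{h})\approx \sqrt{\rho}$ and $\exp(I_{g})\approx \sqrt{\rho}$.

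To handle the integrals, I would change variables via $\Xi=F(\sigma,S)$, using $d\Xi=\tfrac{c^{2}\sqrt{\partial_{\sigma}P}}{P+c^{2}\sigma}d\sigma$ and $\sqrt{\Lambda}(\Xi,\hat{S})=\sqrt{\partial_{\sigma}P(\sigma,S)}$, to convert $I_{h}$ into
\begin{equation*}
I_{h} = \int_{\rho_{\epsilon}(S)}^{\rho} \frac{(c+\sqrt{\partial_{\sigma}P})^{2}}{2(P+c^{2}\sigma)}\,d\sigma,
\end{equation*}
where $\rho_{\epsilon}(S)$ solves $F(\rho_{\epsilon}(S),S)=\epsilon$, with a symmetric expression for $I_{g}$ using $(c-\sqrt{\partial_{\sigma}P})^{2}$. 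The asymptotics from Corollary \ref{approxforPetc} show that the integrand behaves like $\tfrac{1}{2\sigma}$ as $\sigma\to 0^{+}$, so I would write
\begin{equation*}
I_{h} = \tfrac{1}{2}\ln\bigl(\rho/\rho_{\epsilon}(S)\bigr) + R_{h}(\rho,S), \quad R_{h}(\rho,S) := \int_{\rho_{\epsilon}(S)}^{\rho} \left[\frac{(c+\sqrt{\partial_{\sigma}P})^{2}}{2(P+c^{2}\sigma)}-\frac{1}{2\sigma}\right] d\sigma.
\end{equation*}

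The main technical step is controlling $R_{h}$ (and $R_{g}$). Using $P\approx\sigma^{\gamma}$ and $\partial_{\sigma}P\approx\sigma^{\gamma-1}$ from Corollary \ref{approxforPetc}, the remainder integrand expands to a ratio with numerator of order $\sigma^{(\gamma+1)/2}$ and denominator of order $\sigma^{2}$, hence is $O(\sigma^{(\gamma-3)/2})$ near $0$. Since $\gamma>1$ yields $(\gamma-3)/2>-1$, this is integrable at zero, so $|R_{h}|,|R_{g}|$ are bounded by a constant depending only on the initial data (noting that $\rho$ is bounded above by Theorem \ref{REbound}, and $\rho_{\epsilon}(S)$ is bounded above and uniformly positive for $S\in[-B,B]$ by the assumption $\inf_{x}(z_{0}-w_{0})>0$). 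It follows that $\exp(I_{h})\approx\sqrt{\rho/\rho_{\epsilon}(S)}\approx\sqrt{\rho}$, and plugging back into the formula from the first paragraph gives $e^{-h}\approx \sqrt{\rho}/(\partial_{\rho}P)^{1/4}$. The proof for $e^{-g}$ proceeds identically. The main obstacle is the careful asymptotic analysis of the remainder integrand, which critically relies on $\gamma>1$ to secure integrability at $\sigma=0$.
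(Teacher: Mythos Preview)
Your proposal is correct and follows essentially the same architecture as the paper's proof: factor $e^{-h}$ (resp.\ $e^{-g}$) into a non-integral prefactor times $\exp(I_{h})$, show the prefactor is comparable to $(\partial_{\rho}P)^{-1/4}$, and show the integral contributes $\sqrt{\rho}$ via the change of variables $\Xi=F(\sigma,S)$ together with the observation that the integrand equals $\tfrac{1}{2\sigma}$ plus an integrable remainder. Your execution is in fact somewhat cleaner than the paper's---you compute the prefactor exactly as $\tfrac{\sqrt{c}(c\mp\sqrt{\Lambda})}{\Lambda^{1/4}(c\pm\lambda_{i})}$ rather than estimating $e^{G}\approx 1$ and $e^{G}-e^{H}\approx\sqrt{\partial_{\rho}P}$ separately, and you handle the integral by a direct remainder bound $R_{h}=O(1)$ using $\gamma>1$, whereas the paper sandwiches the integrand between $\tfrac{1}{2\sigma}-A\tfrac{\sqrt{\partial_{\sigma}P}}{\sigma}$ and $\tfrac{1}{2\sigma}$ (and justifies the change of variables via an averaging-plus-Lebesgue-differentiation argument that your direct substitution avoids).
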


\begin{proof}
We will focus only on $e^{-g}$, and  $e^{-h}$ can be dealt with similarly. According to \eqref{Lambda}, \eqref{HandG} and \eqref{tidleAlphaBeta}, one has
\begin{equation*}
    e^{-g} = \frac{e^G +1}{\sqrt{e^{2G} - e^{G+H}}} e^{J} \quad \text{with} \quad J \coloneqq \int_{\epsilon}^{\frac{z-w}{2}} \frac{(c- \sqrt{\Lambda})^2}{2c^2 \sqrt{\Lambda}} (\Xi, \hat{S}) \text{d}\Xi.
\end{equation*}
The rest of  the proof is divided  into 2 steps: estimates on $e^{J}$ and $\frac{e^G +1}{\sqrt{e^{2G} - e^{G+H}}}$.

Step 1: the estimate on  $e^{J}$. First, it follows from \eqref{Lambda} and Lemma 4.3 that 
\begin{equation}\label{guadiaola}
\begin{split}
    \frac{1}{S-S_0}
    \int_{S_0}^S J\left(\frac{z-w}{2},\widetilde{S} \right) \text{d} \widetilde{S} &=
    \frac{1}{S-S_0}\int_{S_0}^{S}\int_{\epsilon}^{\frac{z-w}{2}} \frac{(c- \sqrt{\Lambda})^2}{2c^2 \sqrt{\Lambda}} (\Xi,\widetilde{S}) \text{d}\Xi \text{d}\widetilde{S} \\ 
    &= \frac{1}{S-S_0}\int_{S_0}^{S}\int_{\widetilde{\epsilon}(\widetilde{S})}^{\rho} \frac{(c- \sqrt{\partial_\sigma P})^2}{2c^2 \sqrt{\partial_\sigma P}} \partial_\sigma F(\sigma,\widetilde{S}) \text{d}\sigma \text{d}\widetilde{S}.
\end{split}
\end{equation}

 In the above integral, after the change of variable from $\Xi$ to $\sigma$,  the integration area  has changed from $(\widetilde{S},\Xi)\in [S_0,S]\times [\epsilon, \frac{z-w}{2}]$ to $(\widetilde{S},\sigma)\in [S_0,S]\times [\widetilde{\epsilon}(\widetilde{S}),\rho]$. By \eqref{Lambda} and \eqref{F}, one has
$$\Xi(\sigma, \widetilde{S}) \equiv \frac{z-w}{2}(\sigma,\widetilde{S}) \equiv F(\sigma,\widetilde{S}).$$
For  the function $\Upsilon \left(\rho, S \right) = \left(\frac{z-w}{2}, S \right)^\top$, one has
\begin{equation*}
    \nabla \Upsilon \left(\frac{z-w}{2}, S \right) = 
    \begin{bmatrix}
        \partial_\rho F & \partial_S F\\
        0 & 1
    \end{bmatrix} \implies \det \nabla \Upsilon = \partial_\rho F > 0.
\end{equation*}
By the Inverse Function Theorem, $\Upsilon$ is a bijection.
Taking the inverse of $\Upsilon$, one has
$$
\left(\rho, S \right)^\top = \Upsilon^{-1}\left(\frac{z-w}{2}, S \right)= (\Upsilon_1^{-1}, \Upsilon_2^{-1})^\top.
$$
In particular, when $\Xi(\sigma, \widetilde{S}) = \epsilon$, one has that $(\sigma, \widetilde{S})^\top = \Upsilon^{-1}(\epsilon, \widetilde{S})$, which implies that 
$$\sigma = \Upsilon_1^{-1}(\epsilon, \widetilde{S}) \coloneqq \widetilde{\epsilon}(\widetilde{S}) > 0.$$ 

Then according to  Proposition \ref{EntropyBound}, $\widetilde{\epsilon}(\widetilde{S})$ is  uniformly upper bounded. Now we claim  that $\widetilde{\epsilon}(\widetilde{S})$ has an uniform lower bound. Notice that for any $\ \widetilde{S} \in [-B,B]$, if 
\begin{equation}\label{aonier}
    \Xi(\widetilde{\epsilon}(\widetilde{S}), \widetilde{S}) = \epsilon, 
\end{equation}
then 
\begin{equation}\label{dengken}
  F(\widetilde{\epsilon}(\widetilde{S}), \widetilde{S}) = \epsilon.
\end{equation}
Assume for contradiction that $\widetilde{\epsilon}(\widetilde{S})$ does not have an uniform lower bound. Then $\forall \ \delta > 0$, one can find a $S^*$ such that \eqref{dengken} holds and  $\widetilde{\epsilon}(S^*) < \delta$. By Proposition \ref{PropG}, one has $F(\rho, S) \geq 0$ and $F(\rho, S) = 0$ if and only if $\rho = 0$. Then one can choose a $\delta$ small enough such that $F(\widetilde{\epsilon}(S^*), S^*) < \epsilon$, which contradicts with \eqref{dengken}.


Now taking  $S_0 \to S$, by the Lebesgue Differentiation Theorem (see \cite{evans}), one has
\begin{equation*}
\begin{split}
   & \lim_{S_0 \to S} \frac{1}{S-S_0}
    \int_{S_0}^S J\left(\frac{z-w}{2},\widetilde{S} \right) \text{d} \widetilde{S} = J\left(\frac{z-w}{2},S \right),\\[4pt]
   & \lim_{S_0 \to S} \frac{1}{S-S_0}\int_{S_0}^{S}\int_{\widetilde{\epsilon}(\widetilde{S})}^{\rho} \frac{(c- \sqrt{\partial_\sigma P})^2}{2c^2 \sqrt{\partial_\sigma P}} \partial_\sigma F(\sigma,\widetilde{S}) \text{d}\sigma \text{d}\widetilde{S} \\
   =& \int_{\widetilde{\epsilon}(S)}^{\rho} \frac{(c- \sqrt{\partial_\sigma P})^2}{2c^2 \sqrt{\partial_\sigma P}} \partial_\sigma F(\sigma,S) \text{d}\sigma,
    \end{split}
\end{equation*}
which, along with \eqref{guadiaola} and \eqref{xierwa},  implies that 
\begin{align*}
     J\left(\frac{z-w}{2},S \right) = \int_{\widetilde{\epsilon}(S)}^{\rho} \frac{(c- \sqrt{\partial_\sigma P})^2}{2c^2 \sqrt{\partial_\sigma P}} \partial_\sigma F(\sigma,S) \text{d}\sigma = \int_{\widetilde{\epsilon}(S)}^{\rho} \frac{(c- \sqrt{\partial_\sigma P})^2}{2(c^2\sigma +P)}  \text{d}\sigma .
\end{align*}

On the one hand, for the upper bound estimate, according to Theorem \ref{REbound},  one has 
\begin{equation}
    \label{I_inside_upper_bound}
    \frac{(c- \sqrt{\partial_\rho P})^2}{2(c^2\rho +P)} \leq \frac{c^2}{2(c^2\rho +P)} \leq \frac{c^2}{2c^2\rho} = \frac{1}{2\rho}.
\end{equation}

On the other hand, for the  lower bound estimate, one  claims that there exists a  constant $A>0$, such that for all $\rho > 0$, one has
\begin{equation}
    \label{I_inside_lower_bound}
    \frac{(c- \sqrt{\partial_\rho P})^2}{2(c^2\rho +P)} \geq -A\frac{\sqrt{\partial_\rho P}}{\rho} + \frac{1}{2\rho}.
\end{equation}
The above inequality holds if and only if 
\begin{equation}\label{jiyelini}
\sqrt{\partial_\rho P} + 2Ac^2 + 2A\frac{P}{\rho} \geq 2c + \frac{P}{2\rho \sqrt{\partial_\rho P}}.
\end{equation}
Actually, according to  Corollary \ref{approxforPetc} and Theorem \ref{REbound}, one has 
\begin{equation}\label{shengkelusi}
\frac{P}{\rho \sqrt{\partial_\rho P}} \approx \rho^{\frac{\gamma-1}{2}} \approx \sqrt{\partial_\rho P} < c,
\end{equation}
so  \eqref{I_inside_lower_bound}-\eqref{jiyelini} are indeed true if $A$ is large enough and depends only on the initial data and $c$. 

Then it follows from  \eqref{I_inside_upper_bound}-\eqref{I_inside_lower_bound} that
\begin{equation}
    \label{Iineq}
    -A \int_{\widetilde{\epsilon}(S)}^{\rho} \frac{\sqrt{\partial_\sigma P}}{\sigma} \text{d}\sigma + \frac{1}{2} \log\left(\frac{\rho}{\widetilde{\epsilon}(S)}\right) \leq J \leq \frac{1}{2} \log\left(\frac{\rho}{\widetilde{\epsilon}(S)}\right).
\end{equation}
Now by Corollary \ref{approxforPetc}, one has  $\frac{\sqrt{\partial_\rho P}}{\rho} \approx \rho^{\frac{\gamma-3}{2}}$, hence
\begin{equation*}
   \int_{\widetilde{\epsilon}(S)}^{\rho} \frac{\sqrt{\partial_\sigma P}}{\sigma} \text{d}\sigma \approx \rho^{\frac{\gamma-1}{2}} - \widetilde{\epsilon}(S)^{\frac{\gamma-1}{2}},
\end{equation*}
which, along with \eqref{shengkelusi}-\eqref{Iineq}, Theorem \ref{REbound} and its proof , yields that 
\begin{equation}\label{qubo}
    e^{J} \approx \sqrt{\rho}.
\end{equation}

Step 2: the estimate on $    \frac{e^G +1}{\sqrt{e^{2G} - e^{G+H}}} $.
First of all, by \eqref{Lambda} and \eqref{HandG}, one has
\begin{equation}\label{leidongduo}
    e^G = e^{\frac{w+z}{2}} \left(\frac{c+\sqrt{\partial_\rho P}}{c - \sqrt{\partial_\rho P}} \right).
\end{equation}
According to Lemma \ref{RERiemannInvairantBound} and Theorem \ref{REbound}, $e^G > 0$ has uniform upper and lower bounds.

Second, by \eqref{Lambda}, \eqref{HandG} and Theorem \ref{REbound}, one has
\begin{equation}
    \label{e^G-e^H}
    e^G - e^H  = \left(\frac{c+u}{c-u} \right) \left( \frac{4c\sqrt{\partial_\rho P}}{c^2 - \partial_\rho P} \right) \approx \sqrt{\partial_\rho P},
\end{equation}
which implies that 
\begin{equation}\label{libeili}
 \frac{e^G +1}{\sqrt{e^{2G} - e^{G+H}}}\approx \frac{1}{(\partial_\rho P)^{\frac{1}{4}}}.
\end{equation}
Then the desired estimate on $e^{-g}$  follows from \eqref{qubo} and \eqref{libeili}. The proof of this lemma is complete.
\end{proof}

Next, in order to prove the positivity of  $e^{-g} \partial_z  \lambda_2$ and $ e^{-h} \partial_w \lambda_3$, one needs the following relations:
\begin{lemma} \label{DerivativeOfGammaHGnaetc}
\begin{equation*}
\begin{split}
    \partial_w \Lambda =& -\frac{\partial_{\rho\rho}P}{2\partial_\rho F}, \quad \partial_z \Lambda = \frac{\partial_{\rho\rho}P}{2\partial_\rho F}, \quad \partial_{\hat{S}} \Lambda = -\frac{\partial_{\rho\rho}P \partial_S F}{\partial_\rho F} + \partial_{\rho S}P,\\
    \partial_w H =& \frac{1}{c} + \frac{c \partial_{\rho \rho} P}{2(c^2 - \partial_{\rho}P) \sqrt{\partial_{\rho}P}\partial_{\rho}F}, \quad \partial_z H = \frac{1}{c} - \frac{c \partial_{\rho \rho} P}{2(c^2 - \partial_{\rho}P) \sqrt{\partial_{\rho}P}\partial_{\rho}F},\\
    \partial_w G =& \frac{1}{c} - \frac{c \partial_{\rho \rho} P}{2(c^2 - \partial_{\rho}P) \sqrt{\partial_{\rho}P}\partial_{\rho}F}, \quad \partial_z G = \frac{1}{c} + \frac{c \partial_{\rho \rho} P}{2(c^2 - \partial_{\rho}P) \sqrt{\partial_{\rho}P}\partial_{\rho}F},\\
    \partial_{\hat{S}} H =& -\frac{c }{(c^2 - \partial_{\rho}P) \sqrt{\partial_{\rho}P}}\left[\partial_{\rho S}P - \partial_{\rho \rho}P\frac{\partial_{ S}F}{\partial_{\rho}F}\right] = - \partial_{\hat{S}} G,\\
    \partial_w a =& -\frac{1}{2\partial_\rho F}\partial_\rho a  = - \partial_z a,\quad     \partial_{\hat{S}} \widetilde{n} = - \frac{c\partial_\rho n \partial_S F}{\partial_\rho F \sqrt{c^2 - u^2}} + \frac{c \partial_S n}{ \sqrt{c^2 - u^2}},\\
    \partial_w \widetilde{n} =& - \frac{c\partial_\rho n}{2\partial_\rho F \sqrt{c^2 - u^2}} + \frac{un}{2c \sqrt{c^2 - u^2}}, \quad \partial_z \widetilde{n} = \frac{c\partial_\rho n}{2\partial_\rho F \sqrt{c^2 - u^2}} + \frac{un}{2c \sqrt{c^2 - u^2}}.
    \end{split}
\end{equation*}
\end{lemma}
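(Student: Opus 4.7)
Every quantity in the lemma is a composition built from $P(\rho,S)$, $F(\rho,S)$, $n(\rho,S)$ and $u$, so the statement is essentially a chain-rule calculation that rewrites derivatives in the frame $(w,z,\hat S)$ in terms of derivatives in the original variables $(\rho,u,S)$. My plan is to pass through the intermediate coordinates $\Phi=(\Phi_1,\Phi_2,\Phi_3)=\bigl(\tfrac{z-w}{2},\tfrac{z+w}{c},\hat S\bigr)$ of Lemma \ref{FromrhoUStoRiemannInvariant}, for which \eqref{JPhiInverse} already provides $(\rho,u,S)$ as an explicit function of $(\Phi_1,\Phi_2,\Phi_3)$.

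From $z=\Phi_1+\tfrac{c}{2}\Phi_2$, $w=-\Phi_1+\tfrac{c}{2}\Phi_2$ and $\hat S=\Phi_3$ the chain rule yields $\partial_w=-\tfrac12\partial_{\Phi_1}+\tfrac1c\partial_{\Phi_2}$, $\partial_z=\tfrac12\partial_{\Phi_1}+\tfrac1c\partial_{\Phi_2}$ and $\partial_{\hat S}=\partial_{\Phi_3}$. Reading \eqref{JPhiInverse} column by column and composing gives the six atomic partials of $(\rho,u,S)$ in the $(w,z,\hat S)$-frame: $\partial_w\rho=-1/(2\partial_\rho F)=-\partial_z\rho$, $\partial_{\hat S}\rho=-\partial_S F/\partial_\rho F$, $\partial_w u=\partial_z u=(c^2-u^2)/(2c^2)$, $\partial_{\hat S}u=0$, together with $\partial_w S=\partial_z S=0$ and $\partial_{\hat S}S=1$. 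Every identity in the lemma reduces to these together with elementary derivatives of $P$, $F$, $a$ and $n$.

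For $\Lambda=\partial_\rho P(\rho,S)$ the three claims follow immediately by chain rule, e.g.\ $\partial_w\Lambda=\partial_{\rho\rho}P\,\partial_w\rho=-\partial_{\rho\rho}P/(2\partial_\rho F)$, with the extra $\partial_{\rho S}P$ in $\partial_{\hat S}\Lambda$ arising from $\partial_{\hat S}S=1$. For $H$ and $G$ I would use the elementary identity $\tfrac{d}{d\Lambda}\ln\bigl(\tfrac{c-\sqrt\Lambda}{c+\sqrt\Lambda}\bigr)=-c/\bigl(\sqrt\Lambda(c^2-\Lambda)\bigr)$ together with the expressions for the derivatives of $\Lambda$ already obtained; the sign flip between the two logarithms produces the sign reversal in their $\Lambda$-contributions, in particular $\partial_{\hat S}H=-\partial_{\hat S}G$, and the $\tfrac{w+z}{c}$ prefactor supplies the $1/c$ piece in each. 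For $a=a(\rho,S)$ the vanishing of $\partial_w S,\partial_z S$ forces $\partial_w a=-\partial_z a=-\partial_\rho a/(2\partial_\rho F)$. For $\widetilde n=cn(\rho,S)/\sqrt{c^2-u^2}$ I differentiate treating both $\rho$ and $u$ as functions of $(w,z,\hat S)$; the $u$-contribution collapses under $\partial_w u=\partial_z u=(c^2-u^2)/(2c^2)$ to the $un/(2c\sqrt{c^2-u^2})$ term in $\partial_w\widetilde n$ and $\partial_z\widetilde n$, while $\partial_{\hat S}u=0$ yields the cleaner form of $\partial_{\hat S}\widetilde n$. There is no genuine analytic obstacle; the whole lemma is pure bookkeeping, and the only point deserving care is to correctly combine the $1/c$ contributions from $\partial_{\Phi_2}$ with the $(w+z)/c$ prefactors in $H$ and $G$, since a sign or factor-of-two slip there would propagate through every subsequent formula.
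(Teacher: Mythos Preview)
Your proposal is correct and follows exactly the same approach as the paper: both pass through the intermediate coordinates $\Phi=\bigl(\tfrac{z-w}{2},\tfrac{z+w}{c},\hat S\bigr)$ of Lemma~\ref{FromrhoUStoRiemannInvariant}, use the inverse Jacobian \eqref{JPhiInverse} to read off the partials of $(\rho,u,S)$, and then apply the chain rule term by term. The paper in fact only writes out the computation for $\partial_w a$ as a representative example and declares the rest analogous, so your outline is actually more explicit than what appears there.
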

\begin{proof}
The above formulae can be directly obtained by  the chain rule. We  only prove the one on  $\partial_w a$ as an example. First, via the chain rule,  one has
\begin{equation}\label{luni}
    \partial_w a = -\frac{1}{2}\partial_{\frac{z-w}{2}}a + \frac{1}{c}\partial_{\frac{z+w}{c}}a.
\end{equation}
By Lemma \ref{FromrhoUStoRiemannInvariant}, and note that $\partial_u a = 0$ as $a$ is independent of $u$, one can obtain that 
\begin{equation*}
\begin{split}
    \partial_{\frac{z-w}{2}} a &= \partial_\rho a \partial_{\frac{z-w}{2}} \rho + \partial_u a \partial_{\frac{z-w}{2}} u + \partial_S a \partial_{\frac{z-w}{2}} S = \frac{1}{\partial_\rho F} \partial_\rho a,\\
    \partial_{\frac{z+w}{c}} a &= \partial_\rho a \partial_{\frac{z+w}{c}} \rho + \partial_u a \partial_{\frac{z+w}{c}} u + \partial_S a \partial_{\frac{z+w}{c}} S
    = 0,
    \end{split}
\end{equation*}
which, along with \eqref{luni}, yields the desired formula on $\partial_w a$.
\end{proof}

Now based on Lemma \ref{DerivativeOfGammaHGnaetc}, one has 
\begin{corollary}\label{e^-gdzlambda_2>0}
\begin{equation*}
    e^{-g} \partial_z  \lambda_2 > 0 \quad \text{and} \quad e^{-h} \partial_w  \lambda_3 > 0.
\end{equation*}
\end{corollary}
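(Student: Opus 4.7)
The plan is to reduce the claim to showing positivity of $\partial_z \lambda_2$ and $\partial_w \lambda_3$ separately, since Lemma \ref{LemmaApproxhandg} already gives $e^{-g}, e^{-h} \approx \sqrt{\rho}/(\partial_\rho P)^{1/4} > 0$ (using $\rho > 0$ and $\partial_\rho P > 0$, which hold by Theorem \ref{REbound} and Corollary \ref{approxforPetc}). Thus once the signs of $\partial_z \lambda_2$ and $\partial_w \lambda_3$ are established, the conclusion follows immediately.

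To analyze the signs, I would differentiate the identities \eqref{lambdas}, namely $\lambda_2 = c(1 - 2/(e^G+1))$ and $\lambda_3 = c(1 - 2/(e^H+1))$, with respect to $z$ and $w$ respectively. This yields
\[
\partial_z \lambda_2 = \frac{2c\, e^G}{(e^G+1)^2}\, \partial_z G, \qquad \partial_w \lambda_3 = \frac{2c\, e^H}{(e^H+1)^2}\, \partial_w H,
\]
so both signs are determined by $\partial_z G$ and $\partial_w H$. Invoking the formulas in Lemma \ref{DerivativeOfGammaHGnaetc},
\[
\partial_z G = \frac{1}{c} + \frac{c\, \partial_{\rho\rho} P}{2(c^2 - \partial_\rho P)\sqrt{\partial_\rho P}\, \partial_\rho F}, \qquad \partial_w H = \frac{1}{c} + \frac{c\, \partial_{\rho\rho} P}{2(c^2 - \partial_\rho P)\sqrt{\partial_\rho P}\, \partial_\rho F}.
\]

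The remaining step is to verify that every factor appearing in these expressions is strictly positive. From Corollary \ref{approxforPetc} we have $\partial_\rho P > 0$, $\partial_{\rho\rho} P > 0$, and $\partial_\rho F > 0$ (since $\partial_\rho F = c^2 \sqrt{\partial_\rho P}/(P + \rho c^2)$ by \eqref{xierwa}); Theorem \ref{REbound} guarantees $c^2 - \partial_\rho P > 0$ uniformly on $[0,T)\times \mathbb{R}$. Therefore $\partial_z G > 0$ and $\partial_w H > 0$, which in turn gives $\partial_z \lambda_2 > 0$ and $\partial_w \lambda_3 > 0$. Combining with the positivity of $e^{-g}$ and $e^{-h}$ yields the desired inequalities. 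No obstacle is anticipated here, as this is essentially a bookkeeping argument on top of the quantitative estimates already established in Theorem \ref{REbound}, Corollary \ref{approxforPetc}, and Lemmas \ref{LemmaApproxhandg}–\ref{DerivativeOfGammaHGnaetc}.
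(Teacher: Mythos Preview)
Your proposal is correct and follows essentially the same route as the paper: differentiate \eqref{lambdas} to reduce to the sign of $\partial_z G$ and $\partial_w H$, then invoke Lemma \ref{DerivativeOfGammaHGnaetc} together with Corollary \ref{approxforPetc} and Theorem \ref{REbound} to check that every factor is positive. One small remark: the appeal to Lemma \ref{LemmaApproxhandg} for $e^{-g},e^{-h}>0$ is unnecessary, since an exponential of a real number is automatically positive; the paper accordingly omits that step.
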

\begin{proof}
It follows from \eqref{lambdas}, Lemma \ref{DerivativeOfGammaHGnaetc}, Corollary \ref{approxforPetc} and Theorem \ref{REbound} that 
\begin{equation}\label{kawani}
    \partial_z \lambda_2 = \frac{2 \partial_z G}{\left(e^G+1\right)^2} = \frac{2}{\left(e^G+1\right)^2} \left(\frac{1}{c} + \frac{c \partial_{\rho \rho} P}{2(c^2 - \partial_{\rho}P) \sqrt{\partial_{\rho}P}\partial_{\rho}F} \right) > 0.
\end{equation}
Hence $e^{-g} \partial_z  \lambda_2 > 0$. Similarly, one can prove that $e^{-h} \partial_w  \lambda_3 > 0$.
\end{proof}

Now, based on  the above lemmas, one can prove the boundedness of  the coefficients $a_i$ ($i=3,4$) and $b_i$ ($i=3,4$) in the decoupled ODEs \eqref{yuanshiode}. 

\begin{lemma}\label{b_3bound}
\begin{equation*}
    \frac{|b_3|}{\partial_z \lambda_2 e^{-g}} \leq D. 
\end{equation*}
\end{lemma}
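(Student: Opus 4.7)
The starting point is the clean decomposition obtained directly from the definition \eqref{coeffientsB} of $b_3$:
\begin{equation*}
\frac{b_3}{\partial_z \lambda_2\, e^{-g}} = \frac{b_0}{\partial_z \lambda_2} - 2M\widetilde{n}\eta.
\end{equation*}
I would first establish that $\partial_z \lambda_2$ is uniformly bounded above and below by positive constants: combining the explicit formula \eqref{kawani} from the proof of Corollary \ref{e^-gdzlambda_2>0} with Corollary \ref{approxforPetc}, the potentially singular factor $\partial_{\rho\rho}P/(\sqrt{\partial_\rho P}\,\partial_\rho F)$ is of order $\rho^0$, so $\partial_z\lambda_2\approx 1$. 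The task therefore reduces to bounding $|b_0|$ and $|M\widetilde{n}\eta|$ uniformly in time.

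For the $b_0$ piece I would unpack the four summands in \eqref{coeffientsB}. A preliminary reformulation via $\eqref{releul1D}_3$ gives $\partial_2 S=(\lambda_2-\lambda_1)\eta$ and $\partial_3 S=(\lambda_3-\lambda_1)\eta$, while the conservation of $\eta/\widetilde{n}$ along $x^1$-characteristics from Lemma \ref{REconserved} together with Lemma \ref{napproxrho} yield $\eta\approx\widetilde{n}\approx\rho$. The standing approximations then give $a\approx\rho^{(\gamma-1)/2}$, $\lambda_i-\lambda_j\approx\rho^{(\gamma-1)/2}$, and $e^g\approx\rho^{(\gamma-3)/4}$, while Lemma \ref{DerivativeOfGammaHGnaetc} and the defining integral in \eqref{tidleAlphaBeta} yield the expressions for $\partial_z g$, $\partial_{\hat{S}}g$, $\partial_z a$ and $\partial_{\hat S}\lambda_2$. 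The sensitive quantity is $\partial_z g$, whose boundary contribution $\tfrac{(c-\sqrt{\partial_\rho P})^2}{4c^2\sqrt{\partial_\rho P}}$ behaves like $\rho^{-(\gamma-1)/2}$; however it is always paired with $a\approx\rho^{(\gamma-1)/2}$, so the resulting product stays bounded. A term-by-term order count shows each summand of the bracketed expression in $b_0$ is at worst of order $\rho^{(\gamma+1)/2}$, hence $|b_0|\lesssim\rho^{(3\gamma-1)/4}$, which is uniformly bounded by the uniform upper bound on $\rho$ provided by Lemma \ref{RERiemannInvairantBound} and Proposition \ref{PropG}.

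For the remaining piece $M\widetilde{n}\eta$, I would write
\begin{equation*}
M\widetilde{n}^{2}=-\int_{z}^{w}\Big(e^{g}\widetilde{n}\,\partial_{\varsigma}a\,\frac{\lambda_1-\lambda_3}{\lambda_3-\lambda_2}\Big)(\varsigma,z,\hat{S})\,\text{d}\varsigma
\end{equation*}
using \eqref{tidleAlphaBeta}. The same order-tracking shows the integrand is uniformly bounded in $\varsigma$, and Lemma \ref{RERiemannInvairantBound} forces the interval $[w,z]$ to have uniformly bounded length, so $|M\widetilde{n}^2|$ is uniformly bounded. Writing $M\widetilde{n}\eta=(M\widetilde{n}^{2})\cdot(\eta/\widetilde{n})$ and using the time-independent control of $\eta/\widetilde{n}$ from Lemma \ref{REconserved} together with Assumption \ref{REAssumpInitialData} then finishes the estimate.

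The main obstacle is the careful bookkeeping of powers of $\rho$ through all the building blocks: intermediate quantities such as $\partial_z g$ (and, in the parallel bound for $a_3$, $\partial_w h$) contain genuinely singular terms of order $\rho^{-(\gamma-1)/2}$, and one must verify term by term that these singularities are always absorbed by factors of $a$, $\partial_\rho a$ or $\lambda_i-\lambda_j$ carrying matching positive powers of $\rho$. Once this cancellation is laid out, the combination with $e^g\approx\rho^{(\gamma-3)/4}$ and the uniform upper bound on $\rho$ delivers the desired time-independent constant $D$.
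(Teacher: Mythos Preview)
Your approach is correct and matches the paper's: establish $\partial_z\lambda_2\approx 1$, then bound $|b_0|$ and $|M\widetilde{n}\eta|$ separately by term-by-term $\rho$-power counting via Corollary~\ref{approxforPetc} and Lemma~\ref{LemmaApproxhandg}, using the conserved ratio $\eta/\widetilde{n}$ from Lemma~\ref{REconserved}. The only slips are notational---you should write $|\eta|\lesssim\rho$ (not $\eta\approx\rho$, since $\eta=\partial_xS$ may vanish), and the $\partial_{\hat S}g$ term in fact carries an extra $|\log\rho|$ factor from differentiating the integral in the definition of $g$, which the paper handles by a change of variables but which does not affect the boundedness conclusion.
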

\begin{proof}
According to \eqref{leidongduo}, Lemma \ref{RERiemannInvairantBound} and Theorem \ref{REbound},  one gets 
\begin{equation}
    \label{e^G = 1}
    e^G \approx 1,
\end{equation}
which, along  with Corollary \ref{approxforPetc}, \eqref{kawani} and Theorem \ref{REbound}, implies
\begin{equation}\label{yaoming}
    \partial_z \lambda_2 \approx 1.
\end{equation}
Hence by the definition of $b_3$ given by \eqref{coeffientsB}, one can get that
\begin{equation*}
    \frac{|b_3|}{\partial_z \lambda_2 e^{-g}} \lesssim  |M\widetilde{n} \eta|+|b_0|.
\end{equation*}
The rest of the proof can be divided into two steps: estimates on $|M\widetilde{n} \eta|$ and  $|b_0|$.

Step 1: the estimate on  $|M\widetilde{n} \eta|$. By the definition of $M$ given by \eqref{tidleAlphaBeta}, one has
\begin{align}\label{feidinande}
    |M\widetilde{n} \eta| \lesssim \left|\frac{\eta}{\widetilde{n}}\right| \left|\int_{z}^{w} e^g \widetilde{n} \partial_\varsigma a \left(\frac{\lambda_1-\lambda_3}{\lambda_3-\lambda_2}\right)(\varsigma,z,\hat{S}) \text{d}\varsigma\right|.
\end{align}
First, according to  Lemma \ref{DerivativeOfGammaHGnaetc}, Corollary \ref{approxforPetc}, $\gamma > 1$, Theorem \ref{REbound} and its proof, one has
\begin{equation}
    \label{dwa<1}
    \partial_w a \lesssim 1 + \rho^{\gamma-1} + \rho \lesssim 1.
\end{equation}

Second, it follows from \eqref{3tezheng} and direct calculations that 
\begin{equation*}
    \lambda_2 - \lambda_3 = \frac{2c^2 (c^2 - u^2)}{c^4- u^2 \partial_\rho P} \sqrt{\partial_\rho P},\quad 
    \lambda_1 - \lambda_3 = \frac{c^2 - u^2}{c^2 - u\sqrt{\partial_\rho P}}\sqrt{\partial_\rho P},
\end{equation*}
which, along with Theorem \ref{REbound}, implies that 
\begin{equation}
    \label{4.74}
    \left|\frac{\lambda_1-\lambda_3}{\lambda_3-\lambda_2}\right| \approx 1.
\end{equation}

At last,  according to  Lemmas \ref{napproxrho}-\ref{LemmaApproxhandg},  one has 
\begin{equation*}
    |e^g \widetilde{n}| \lesssim  \left| (\partial_\rho P)^{\frac{1}{4}} \sqrt{\rho} \right|,
\end{equation*}
which, along with the above estimates, Lemma \ref{REconserved}, Lemma \ref{napproxrho} and  Corollary \ref{approxforPetc}, implies that 
\begin{equation*}
\begin{split}
    |M\widetilde{n} \eta| \lesssim &  \left|\int_{0}^{\frac{z-w}{2}} (\partial_\rho P)^{\frac{1}{4}} \sqrt{\rho} (\Xi,\hat{S})  \text{d} \Xi \right|\\
    \lesssim &  \left|\int_{0}^{\rho} (\partial_\sigma P)^{\frac{1}{4}} \sqrt{\sigma} \partial_\sigma F  \text{d}\sigma\right| \lesssim  \rho^{\frac{3\gamma-1}{4}}.
    \end{split}
\end{equation*}
As $\gamma > 1$ and $\rho$ has an uniform upper bound by Theorem \ref{REbound} and its proof, $|M\widetilde{n}\eta|$ is bounded.

Step 2: the estimate on  $|b_0|$. First, one has  $|b_0| \leq \sum_{i=1}^{6} |b_0^i|$, where $b_0^i$ is the i-th term of $b_0$. First, by $\eqref{releul1D}_3$, Lemma \ref{LemmaApproxhandg} and Corollary \ref{approxforPetc}, one has
\begin{equation}\label{tulei}
    |b_0^1| = |e^g \partial_2 S a \partial_z g| \lesssim \frac{(\partial_\rho P)^{\frac{1}{4}}}{\sqrt{\rho}} (\lambda_2- \lambda_1) \left|\frac{\eta}{\widetilde{n}}\right| \widetilde{n} \rho^{\frac{\gamma-1}{2}} |\partial_z g|.
\end{equation}
It follows from \eqref{3tezheng}, \eqref{tidleAlphaBeta}, \eqref{e^G-e^H}, Lemma \ref{DerivativeOfGammaHGnaetc}, Corollary \ref{approxforPetc}, Theorem \ref{REbound} and direct calculations that 
\begin{equation}
\label{lambda_2-lambda_1}
\begin{split}
    \lambda_2 - \lambda_1 =& \frac{c^2-u^2}{c^2 + u \sqrt{\partial_\rho P}} \sqrt{\partial_\rho P}\lesssim \sqrt{\partial_\rho P},\\
    |\partial_z g| = &  \left| - \frac{e^G \partial_z G}{e^G+1} + \frac{2e^G \partial_z G - e^H (\partial_z G+ \partial_z H)}{2(e^G-e^H)} - \frac{(c-\sqrt{\partial_\rho P})^2}{4c^2 \sqrt{\partial_\rho P}} \right| \lesssim 1 + \frac{1}{\sqrt{\partial_\rho P}},
    \end{split}
\end{equation}
which, along with \eqref{tulei}, implies that 
\begin{equation*}
    |b_0^1| \lesssim (\partial_\rho P)^{\frac{5}{4}} \sqrt{\rho} \left|\frac{\eta}{\widetilde{n}}\right|\left(1 + \frac{1}{\sqrt{\partial_\rho P}}\right).
\end{equation*}

Similarly, by $\eqref{releul1D}_3$, \eqref{lambda_2-lambda_1} and Lemma \ref{LemmaApproxhandg}, one has 
\begin{equation}\label{luonaerduo}
    |b_0^2| = |e^g\partial_2 S \partial_{\hat{S}} g| \lesssim \frac{(\partial_\rho P)^{\frac{1}{4}}}{\sqrt{\rho}} \left|\frac{\eta}{\widetilde{n}}\right| \widetilde{n} \sqrt{\partial_\rho P} |\partial_{\hat{S}} g|.
\end{equation}
According to  \eqref{tidleAlphaBeta}, one has
\begin{equation}\label{maikong}
\begin{split}
    \partial_{\hat{S}} g = &- \frac{e^G \partial_{\hat{S}} G}{e^G+1} + \frac{2e^G \partial_{\hat{S}} G - e^H (\partial_{\hat{S}} G+ \partial_{\hat{S}} H)}{2(e^G-e^H)}-\partial_{\hat{S}}J,\\
    \partial_{\hat{S}}J=&- \int_{\epsilon}^{\frac{z-w}{2}} \frac{c^2 - \partial_\rho P}{4c^2 (\partial_\rho P)^{\frac{3}{2}}} \left[\partial_{\rho S}P -\frac{\partial_{\rho \rho}P \partial_S F}{2\partial_\rho F}\right](\Xi,\hat{S}) \text{d} \Xi .
\end{split}
\end{equation}
According to \eqref{HandG}, Lemma \ref{DerivativeOfGammaHGnaetc}, Corollary \ref{approxforPetc} and Theorem \ref{REbound}, one can obtain 
\begin{equation}\label{runiaoer}
    \quad |\partial_{\hat{S}} G| = |\partial_{\hat{S}} H| \lesssim \sqrt{\partial_\rho P}.
\end{equation}
By Corollary \ref{approxforPetc} and a change of variable as in the proof of Lemma \ref{LemmaApproxhandg}, one can obtain
\begin{align*}
    |\partial_{\hat{S}} J| &=  \left|\int_{\widetilde{\epsilon}(S)}^{\rho} \frac{c^2 - \partial_\sigma P}{4c^2 (\partial_\sigma P)^{\frac{3}{2}}} \left[\partial_{\sigma S}P -\frac{\partial_{\sigma \sigma}P \partial_S F}{2\partial_\sigma F}\right] \partial_\sigma F \text{d} \sigma\right|, \\& 
    \lesssim \left|\int_{\widetilde{\epsilon}(S)}^{\rho} \frac{1}{\sigma} \text{d} \sigma\right|
    \lesssim 1+ |\log(\rho)|,
\end{align*}
which, along with \eqref{e^G-e^H}, \eqref{luonaerduo}-\eqref{runiaoer} and direct calculations, yields that 
\begin{align*}
    |b_0^2|
    \lesssim \frac{(\partial_\rho P)^{\frac{3}{4}}}{\sqrt{\rho}} \left|\frac{\eta}{\widetilde{n}} \right| \widetilde{n} (1 + \sqrt{\partial_\rho P} +  |\log(\rho)|).
\end{align*}

Next, by $\eqref{releul1D}_3$, \eqref{e^G = 1}, \eqref{yaoming}, \eqref{4.74}, \eqref{runiaoer}, Lemma \ref{LemmaApproxhandg}-\ref{DerivativeOfGammaHGnaetc}, Corollary \ref{approxforPetc} and Theorem \ref{REbound}, one has $|\partial_w \lambda_2| \approx 1$ and
\begin{equation*}
\begin{split}
 |b_{0}^3| =&\left|e^g \partial_w \lambda_2 \frac{a \partial_3 S}{\lambda_3 - \lambda_2}\right| = \left|e^g \partial_w \lambda_2 \frac{a (\lambda_3-\lambda_1) \eta}{\lambda_3 - \lambda_2}\right|
 \lesssim \frac{(\partial_\rho P)^{\frac{1}{4}}}{\sqrt{\rho}} \rho^{\frac{\gamma-1}{2}} \widetilde{n} \left|\frac{\eta}{\widetilde{n}}\right|,\\
|b_{0}^4| =& |e^g \eta a \partial_z \lambda_2| \lesssim \frac{(\partial_\rho P)^{\frac{1}{4}}}{\sqrt{\rho}} \widetilde{n} \left|\frac{\eta}{\widetilde{n}} \right| \rho^{\frac{\gamma-1}{2}},\\
  |b_{0}^5| =& |e^g \eta \partial_{\hat{S}} \lambda_2| \lesssim \frac{(\partial_\rho P)^{\frac{1}{4}}}{\sqrt{\rho}} \widetilde{n} \left|\frac{\eta}{\widetilde{n}} \right| \left|\partial_{\hat{S}} \lambda_2 \right|\\
  =&\frac{(\partial_\rho P)^{\frac{1}{4}}}{\sqrt{\rho}} \widetilde{n} \left|\frac{\eta}{\widetilde{n}} \right| \left|\frac{2e^G \partial_{\hat{S}}G}{ (e^G+1)^2} \right|
  \lesssim  \frac{(\partial_\rho P)^{\frac{1}{4}}}{\sqrt{\rho}} \widetilde{n} \left|\frac{\eta}{\widetilde{n}} \right| \sqrt{\partial_\rho P}.
  \end{split}
\end{equation*}

Finally, by \eqref{lambda_2-lambda_1}, Lemma \ref{LemmaApproxhandg}, Corollary \ref{approxforPetc} and Theorem \ref{REbound},  one has
\begin{equation*}
    |b_0^6| = |e^g \eta \partial_z a (\lambda_1-\lambda_2)| \lesssim \frac{(\partial_\rho P)^{\frac{1}{4}}}{\sqrt{\rho}} \widetilde{n} \left|\frac{\eta}{\widetilde{n}} \right| |\partial_z a| \sqrt{\partial_\rho P}.
\end{equation*}
According to  Lemma \ref{DerivativeOfGammaHGnaetc} and  Corollary \ref{approxforPetc}, one gets
\begin{equation*}
    |\partial_z a| \lesssim 1 + \rho^{\gamma-1 } + \rho.
\end{equation*}
Hence
\begin{equation*}
    |b_0^6| \lesssim \frac{(\partial_\rho P)^{\frac{1}{4}}}{\sqrt{\rho}} \widetilde{n} \left|\frac{\eta}{\widetilde{n}} \right| \sqrt{\partial_\rho P} (1 + \rho^{\gamma-1 } + \rho).
\end{equation*}

Thus, the desired boundedness follows from the above estimates, Lemma \ref{REconserved}, Lemma \ref{napproxrho}, Corollary \ref{approxforPetc}, Theorem \ref{REbound} and its proof.
\end{proof}

\begin{lemma} \label{b_1bound}
\begin{equation*}
    \frac{|b_1|}{\partial_z \lambda_2 e^{-g}} \leq D. 
\end{equation*}
\end{lemma}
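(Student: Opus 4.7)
The plan is to mimic the computational template used in the proof of Lemma \ref{b_3bound} for $b_3$, reducing the claim to a product of explicit powers of $\rho$ which, by Theorem \ref{REbound}, is uniformly bounded.

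First, I would exploit the fact $\partial_z\lambda_2\approx 1$ established in \eqref{yaoming} and the equivalence $e^g\approx (\partial_\rho P)^{1/4}/\sqrt{\rho}$ from Lemma \ref{LemmaApproxhandg}, together with Corollary \ref{approxforPetc}, to obtain $e^{2g}\approx \rho^{(\gamma-3)/2}$, and to write
\begin{equation*}
\frac{|b_1|}{\partial_z\lambda_2\, e^{-g}}
\;\lesssim\;
e^{2g}\,|\eta|\,|\partial_w a|\,
\Bigl|\tfrac{\lambda_1-\lambda_3}{\lambda_3-\lambda_2}\Bigr|
\cdot|\mathcal B|,
\end{equation*}
where $\mathcal B$ denotes the bracket in the definition \eqref{coeffientsB} of $b_1$. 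The easy pointwise factors are then handled exactly as in the proof of Lemma \ref{b_3bound}: $|\partial_w a|\lesssim 1$ by \eqref{dwa<1}, $|(\lambda_1-\lambda_3)/(\lambda_3-\lambda_2)|\approx 1$ by \eqref{4.74}. Moreover, the conservation law $\eta/\widetilde{n}=\eta_0/\widetilde{n}_0$ provided by Lemma \ref{REconserved} combined with $\widetilde{n}\approx\rho$ from Lemma \ref{napproxrho} gives $|\eta|\lesssim\rho$.

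Next I would estimate the four summands of $\mathcal B$ separately. Using \eqref{riPDEs} and $\eqref{releul1D}_3$, one has $\partial_2 z=-a(\lambda_2-\lambda_1)\eta$ and $\partial_2 S=(\lambda_2-\lambda_1)\eta$. Combining $|a|\lesssim\rho^{(\gamma-1)/2}$ and $|\lambda_2-\lambda_1|\lesssim\sqrt{\partial_\rho P}\lesssim \rho^{(\gamma-1)/2}$ from Corollary \ref{approxforPetc} and \eqref{lambda_2-lambda_1}, one gets $|\partial_2 z|\lesssim \rho^\gamma$ and $|\partial_2 S|\lesssim \rho^{(\gamma+1)/2}$; the remaining terms $|a\partial_3 S|$ and $|\eta a(\lambda_3-\lambda_2)|$ are likewise $\lesssim \rho^\gamma$. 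The ratios $\partial_z\widetilde{n}/\partial_w\widetilde{n}$ and $\partial_{\hat S}\widetilde{n}/\partial_w\widetilde{n}$ are controlled using the explicit formulas in Lemma \ref{DerivativeOfGammaHGnaetc}: writing $\partial_z\widetilde{n}=\mathcal A+\mathcal D$ and $\partial_w\widetilde{n}=-\mathcal A+\mathcal D$ with $\mathcal A=\tfrac{c\partial_\rho n}{2\partial_\rho F\sqrt{c^2-u^2}}$ and $\mathcal D=\tfrac{un}{2c\sqrt{c^2-u^2}}$, and using $\partial_\rho n=\mathcal O(1)$, $\partial_\rho F\approx \rho^{(\gamma-3)/2}$, $n\approx\rho$, one finds $\mathcal A\approx \rho^{(3-\gamma)/2}$ dominates $\mathcal D\approx\rho$ for $1<\gamma<3$ as $\rho\to 0$, so $\partial_z\widetilde{n}/\partial_w\widetilde{n}\to -1$ and is uniformly bounded; a similar but easier calculation bounds $\partial_{\hat S}\widetilde{n}/\partial_w\widetilde{n}$ by $\rho^{(\gamma-1)/2}$. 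Hence $|\mathcal B|\lesssim \rho^{(\gamma+1)/2}$.

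Combining the bounds yields
\begin{equation*}
\frac{|b_1|}{\partial_z\lambda_2\, e^{-g}}
\;\lesssim\; \rho^{(\gamma-3)/2}\cdot\rho\cdot\rho^{(\gamma+1)/2}
\;=\;\rho^{\gamma},
\end{equation*}
which, since $\rho$ is uniformly bounded above by Theorem \ref{REbound} and its proof, is bounded by a constant $D$ depending only on $(z_0,w_0,S_0)$, $R$, $\gamma$, $c$. The main subtlety I anticipate is the verification that $\partial_z\widetilde{n}/\partial_w\widetilde{n}$ stays bounded: both numerator and denominator tend to zero as $\rho\to 0$, and one must show that the dominant subterm $\mathcal A$ does not cancel so as to leave only a vanishing remainder; this requires the sign/asymptotic analysis above together with the uniform positivity of $\rho$ (in the interior of the life-span) guaranteed by Theorem \ref{REbound}.
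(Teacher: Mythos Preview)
Your overall strategy---write $|b_1|/( \partial_z\lambda_2 e^{-g})\lesssim e^{2g}|\eta||\partial_w a|\,|\tfrac{\lambda_1-\lambda_3}{\lambda_3-\lambda_2}|\cdot|\mathcal B|$ and estimate each factor by explicit powers of $\rho$---is the same as the paper's. The final arithmetic is fine. But there is a genuine gap at the step you yourself flag as the ``main subtlety'': you assert that $\partial_z\widetilde n/\partial_w\widetilde n$ is uniformly bounded, yet only justify the limit $\to -1$ as $\rho\to 0$. Asymptotic dominance of $\mathcal A$ over $\mathcal D$ near $\rho=0$ does \emph{not} rule out $\partial_w\widetilde n=-\mathcal A+\mathcal D$ vanishing at some intermediate $\rho>0$; when $u>0$ both terms are positive and nothing in your argument prevents $\mathcal D=\mathcal A$ somewhere in the compact $\rho$-range. ``Uniform positivity of $\rho$'' (Theorem \ref{REbound}) is irrelevant to this cancellation issue.

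The paper spends its entire Step 1 closing exactly this gap, proving the quantitative bound $1/|\partial_w\widetilde n|\lesssim \sqrt{\partial_\rho P}/\rho$. The key input is the algebraic identity $\partial_\rho n\,(P+\rho c^2)=nc^2$ coming from the equation of state, which gives $\mathcal A/|\mathcal D|=c^2/(|u|\sqrt{\partial_\rho P})$; combined with the \emph{uniform} subsonic bounds $|u|<c$, $\sqrt{\partial_\rho P}<c$ from Theorem \ref{REbound}, this yields $\mathcal A>|\mathcal D|$ everywhere with a uniform margin, hence $|\partial_w\widetilde n|\gtrsim \mathcal A\approx \rho/\sqrt{\partial_\rho P}$. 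Without this step your bound on $|\mathcal B|$ is unjustified. A secondary issue: you restrict to $1<\gamma<3$ when comparing the exponents of $\mathcal A$ and $\mathcal D$, but the lemma is stated for all $\gamma>1$; the paper's quantitative argument covers all cases at once.
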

\begin{proof}The proof will be divided into two steps: the estimate on a key term  $\frac{1}{|\partial_w\widetilde{n}|}$ and then the boundedness of   $\frac{|b_1|}{\partial_z \lambda_2 e^{-g}}$.

Step 1: the estimate on  $\frac{1}{|\partial_w\widetilde{n}|}$. According to  Lemma \ref{DerivativeOfGammaHGnaetc}, if $u \leq 0$, then
\begin{equation}\label{yintao}
    |\partial_w \widetilde{n}| = \left|\frac{c\partial_\rho n}{2\partial_\rho F \sqrt{c^2 - u^2}} - \frac{un}{2c \sqrt{c^2 - u^2}}\right| \geq \frac{c\partial_\rho n}{2\partial_\rho F \sqrt{c^2 - u^2}}.
\end{equation}
If $u > 0$, by Theorem \ref{REbound}, one gets that
\begin{equation}\label{siqing}
\begin{split}
    |\partial_w \widetilde{n}| &\geq \frac{c\partial_\rho n}{2\partial_\rho F \sqrt{c^2 - u^2}} - \frac{n}{2 \sqrt{c^2 - u^2}}\\
    &= \frac{n}{2\sqrt{\partial_\rho P}\sqrt{c^2 - u^2}} \left(\frac{c\sqrt{\partial_\rho P}\partial_\rho n}{n\partial_\rho F}-\sqrt{\partial_\rho P}\right).
    \end{split}
\end{equation}
We claim that there exists some  constant  $D^*>0$   depending only on $(z_0,w_0,S_0)$, $R$, $\gamma$ and $c$, and   independent of $(t,x) \in [0,T) \times \mathbb{R}$, such that
\begin{equation}
    \label{dwn_upperbound}
    \frac{c\sqrt{\partial_\rho P}\partial_\rho n}{n\partial_\rho F}-\sqrt{\partial_\rho P} > D^*.
\end{equation}
According to  Proposition \ref{AProp_nandrho} and  \eqref{tianrui-1}, one has $\rho \partial_\rho n = n$. Then
\begin{equation}\label{sangqiesi}
\begin{split}
    &\frac{c\sqrt{\partial_\rho P}\partial_\rho n}{n\partial_\rho F}-\sqrt{\partial_\rho P} = \frac{\sqrt{\partial_\rho P}}{n \partial_\rho F} (c \partial_\rho n - n \partial_\rho F) \\
    =& \frac{\sqrt{\partial_\rho P}}{\rho n \partial_\rho F} (c n - \rho n \partial_\rho F)= \frac{\sqrt{\partial_\rho P}}{\rho \partial_\rho F} (c - \rho \partial_\rho F).
\end{split}
\end{equation}
By \eqref{xierwa}, Theorem \ref{REbound} and the fact that $\frac{P}{\rho} > 0$, one has
\begin{equation*}
    \rho \partial_\rho F = \frac{c^2 \sqrt{\partial_\rho P}}{c^2 + \frac{P}{\rho}} < \sqrt{\partial_\rho P} < c- D^{**},
\end{equation*}
for some  constant  $D^{**}>0$   depending only on $(z_0,w_0,S_0)$, $R$, $\gamma$ and $c$, and   independent of $(t,x) \in [0,T) \times \mathbb{R}$. Then one has 
\begin{equation}\label{liuxiaona}
    c- \rho \partial_\rho F > D^{**}.
\end{equation}
Also note that, according to \eqref{xierwa} and Corollary \ref{approxforPetc}, one has
\begin{equation*}
    \frac{\sqrt{\partial_\rho P}}{\rho \partial_\rho F} = \frac{c^2 + \frac{P}{\rho}}{c^2} \approx 1,
\end{equation*}
which, along with \eqref{sangqiesi}-\eqref{liuxiaona},  implies that  \eqref{dwn_upperbound} is indeed true.

Then according to \eqref{tianrui-1}, \eqref{yintao}-\eqref{dwn_upperbound}, Proposition \ref{EntropyBound}, Lemma \ref{napproxrho}, Corollary \ref{approxforPetc} and Theorem \ref{REbound}, one has 
\begin{equation}
    \label{dwn_lowerbound}
    \frac{1}{|\partial_w \widetilde{n}|} \lesssim \frac{\sqrt{\partial_\rho P}}{\rho}.
\end{equation}

Step 2: the boundedness of  $|b_1| e^g$. According to \eqref{yaoming}, one has 
\begin{equation*}
    \frac{|b_1|}{\partial_z \lambda_2 e^{-g}} \approx |b_1|e^{g},
\end{equation*}
where $|b_1|e^{g} \leq \sum_{i=1}^4 |b_1^i|e^g$, and $b_1^i$ is  the i-th term in $b_1$.

First, by $\eqref{riPDEs}_2$, \eqref{dwa<1}, \eqref{4.74}, \eqref{dwn_lowerbound} and Lemma \ref{LemmaApproxhandg}, one has
\begin{equation*}
\begin{split}
    |b_1^1|e^g =& \left|e^{2g} \eta \partial_w a \left(\frac{\lambda_1 - \lambda_3}{\lambda_3 - \lambda_2}\right)\frac{\partial_z \widetilde{n}}{\partial_w \widetilde{n}}\partial_2 z\right|\\
    \lesssim &\frac{\sqrt{\partial_\rho P}}{\rho} \left|\frac{\eta}{\widetilde{n}}\right| \widetilde{n} \frac{|\partial_z \widetilde{n}|\sqrt{\partial_\rho P}}{\rho} |a||\partial_2 S|.
    \end{split}
\end{equation*}
By $\eqref{releul1D}_3$, \eqref{lambda_2-lambda_1}, \eqref{tianrui-1}, Proposition \ref{EntropyBound}, Lemma \ref{napproxrho}, Lemma \ref{DerivativeOfGammaHGnaetc}, Corollary \ref{approxforPetc} and Theorem \ref{REbound}, one can get that
\begin{equation}
    \label{d2SEstimate}
    |\partial_z \widetilde{n}| \lesssim \frac{\rho}{\sqrt{\partial_\rho P}}, \quad |\partial_2 S| \lesssim \sqrt{\partial_\rho P} \left|\frac{\eta}{\widetilde{n}}\right| \widetilde{n}, \quad |a| \approx\sqrt{\partial_\rho P}.
\end{equation}
Hence one has 
\begin{equation*}
    |b_1^1|e^g \leq \frac{(\partial_\rho P)^{\frac{3}{2}}}{\rho^2} \left|\frac{\eta}{\widetilde{n}}\right|^2 \widetilde{n}^2.
\end{equation*}

Next, by \eqref{dwa<1}, \eqref{4.74}, \eqref{dwn_lowerbound}, \eqref{d2SEstimate} and Lemma \ref{LemmaApproxhandg}, one has
\begin{equation}\label{jiujingfazi}
    |b_1^2|e^g = \left|e^{2g} \eta \partial_w a \left(\frac{\lambda_1 - \lambda_3}{\lambda_3 - \lambda_2}\right)\frac{\partial_{\hat{S}} \widetilde{n}}{\partial_w \widetilde{n}}\partial_2 S\right| \lesssim  \left|\frac{\eta}{\widetilde{n}}\right|^2 \frac{(\partial_\rho P)^{\frac{3}{2}}}{\rho^2} \widetilde{n}^2 |\partial_{\hat{S}} \widetilde{n}|.
\end{equation}
From Theorem \ref{REbound}, Lemma \ref{DerivativeOfGammaHGnaetc} and Corollary \ref{approxforPetc}, one can get that
\begin{equation*}
    |\partial_{\hat{S}} \widetilde{n}| \lesssim \rho |\partial_\rho n| + |\partial_{S} n|.
\end{equation*}
According to  \eqref{tianrui-1}, Proposition \ref{EntropyBound} and Lemma \ref{napproxrho}, one can conclude 
\begin{equation*}
    |\partial_{S} n| \lesssim \rho^\gamma, \quad \rho |\partial_\rho n| = \rho,
\end{equation*}
so
\begin{equation}
    \label{dSnOrder}
    |\partial_{\hat{S}} \widetilde{n}| \lesssim \rho,
\end{equation}
which, along with \eqref{jiujingfazi},  implies
\begin{equation*}
    |b_1^2|e^g \leq \left|\frac{\eta}{\widetilde{n}}\right|^2 \frac{(\partial_\rho P)^{\frac{3}{2}}}{\rho^2} \widetilde{n}^2 \rho.
\end{equation*}

At last, by $\eqref{releul1D}_3$, \eqref{3tezheng}, \eqref{dwa<1}, \eqref{4.74}, Lemma \ref{napproxrho}-\ref{LemmaApproxhandg}, Corollary \ref{approxforPetc} and Theorem \ref{REbound}, $|b_1^3|e^g$ and $|b_1^4|e^g$ can be easily dealt with as
\begin{equation*}
\begin{split}
    |b_1^3|e^g =& \left|e^{2g} \eta \partial_w a \left(\frac{\lambda_1 - \lambda_3}{\lambda_3 - \lambda_2}\right)a \partial_3 S\right| \lesssim  \left|\frac{\eta}{\widetilde{n}}\right|^2 (\partial_\rho P)^{\frac{3}{2}} \rho,\\
    |b_1^4|e^g =& \left|e^{2g} \eta^2 \partial_w a \left(\lambda_1 - \lambda_3\right) a\right| \lesssim  \left|\frac{\eta}{\widetilde{n}}\right|^2 (\partial_\rho P)^{\frac{3}{2}} \rho.
    \end{split}
\end{equation*}

Thus, the desired boundedness follows from the above estimates, Lemma \ref{REconserved}, Lemma \ref{napproxrho}, Corollary \ref{approxforPetc},  Theorem \ref{REbound} and its proof.
\end{proof}

\begin{lemma}\label{zhengzhi}
\begin{equation*}
    \frac{|b_2|}{\partial_z \lambda_2 e^{-g}} \leq D. 
\end{equation*}
\end{lemma}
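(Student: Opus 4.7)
The plan is to proceed in close analogy with Lemmas \ref{b_3bound}--\ref{b_1bound}. Since $\partial_z\lambda_2 \approx 1$ by \eqref{yaoming} and $e^{-g} \approx \sqrt{\rho}/(\partial_\rho P)^{1/4}$ by Lemma \ref{LemmaApproxhandg}, the target estimate reduces to proving $|b_2|\,e^{g} \lesssim 1$. I would then split $b_2$ into the five summands appearing in its definition \eqref{coeffientsB} and treat the first four (those containing derivatives of $M$) and the final term $-M\widetilde{n}\bigl(\partial_2\eta - (\eta/\widetilde{n})\partial_2\widetilde{n}\bigr)$ separately.

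For the final term, the strategy is to invoke Corollary \ref{REconservedCoro} to rewrite
\[
\partial_2\eta - \frac{\eta}{\widetilde{n}}\partial_2\widetilde{n} = (\lambda_2-\lambda_1)\Bigl(\partial_{xx}S - \frac{\eta}{\widetilde{n}}\partial_x\widetilde{n}\Bigr),
\]
and then to use the conservation identity of Lemma \ref{REconserved} to bound the bracket pointwise by $\widetilde{n}^2$ times a quantity determined by the $C^2$ norm of $S_0$ and the $C^1$ norm of $\widetilde{n}_0$, which is uniformly controlled under Assumption \ref{REAssumpInitialData}. Combined with $|\lambda_2-\lambda_1|\lesssim\sqrt{\partial_\rho P}$ from \eqref{lambda_2-lambda_1} and the bound $|M\widetilde{n}|\lesssim \rho^{(3\gamma-5)/4}$ (obtained by dividing the integral estimate derived inside the proof of Lemma \ref{b_3bound} by $\widetilde{n}\approx\rho$), the resulting contribution, after multiplication by $e^{g}\approx \rho^{(\gamma-3)/4}$, is of order $\rho^{(3\gamma-1)/2}$, which is bounded above by Theorem \ref{REbound}.

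For the four terms involving $\partial_wM$, $\partial_zM$ and $\partial_{\hat{S}}M$, the crucial observation is the identity
\[
\partial_w(M\widetilde{n}^2) = -\frac{(\lambda_1-\lambda_3)\partial_w a}{\lambda_3-\lambda_2}\,e^{g}\widetilde{n},
\]
which is precisely the relation already established inside the proof of Theorem \ref{REDecoupledODEs}. This yields a clean formula for $\partial_wM$ and avoids any direct differentiation of an awkward integral. The derivatives $\partial_zM$ and $\partial_{\hat{S}}M$ are then obtained by differentiating the integral formula in \eqref{tidleAlphaBeta} under the integral sign; the boundary term at $\varsigma=z$ vanishes because the integrand is of order $(\partial_\rho P)^{1/4}\sqrt{\rho}$, which tends to $0$ as $\rho\to 0$ along the diagonal $\varsigma=z$. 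Equipped with these formulas, I would combine with $|\partial_2 S|\lesssim\sqrt{\partial_\rho P}\,|\eta/\widetilde{n}|\,\widetilde{n}$ and $|\partial_2 z|=|a\,\partial_2 S|$ from \eqref{riPDEs} and \eqref{d2SEstimate}, together with $|\partial_z\widetilde{n}|\lesssim\rho/\sqrt{\partial_\rho P}$ and the lower bound $1/|\partial_w\widetilde{n}|\lesssim\sqrt{\partial_\rho P}/\rho$ from the proof of Lemma \ref{b_1bound}, and do a careful count of powers: each term ends up bounded after multiplication by $e^{g}$ by a nonnegative power of $\rho$, hence is $\lesssim 1$ by Theorem \ref{REbound}.

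The principal obstacle is the delicate accounting of powers of $\rho$ and $\partial_\rho P$ in the $\partial_*M$ pieces. Each of the factors $1/\widetilde{n}$, $1/\partial_w\widetilde{n}$ and the $(\partial_\rho P)^{1/4}/\sqrt{\rho}$ arising from $e^{g}$ is singular as $\rho\to 0$, and one must verify that the compensating positive powers coming from $|\partial_2 S|$, $|a|$, $|M\widetilde{n}|$ and $|\lambda_2-\lambda_1|$ always conspire to leave an overall nonnegative power of $\rho$ in each term. Exploiting the identity for $\partial_w(M\widetilde{n}^2)$ in place of a naive term-by-term differentiation of the defining integral is what renders this cancellation transparent; a direct approach would produce unbounded boundary contributions that would obscure the mechanism.
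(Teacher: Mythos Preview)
Your plan follows the paper's argument almost exactly: reduce to $e^g|b_2|\lesssim 1$ via \eqref{yaoming}, split $b_2$ into its five summands, compute $\partial_w M$ from the identity $\partial_w(M\widetilde n^2)=-e^g\widetilde n\,\partial_w a\,(\lambda_1-\lambda_3)/(\lambda_3-\lambda_2)$, differentiate under the integral for $\partial_z M$ and $\partial_{\hat S}M$, and handle the last summand through Corollary~\ref{REconservedCoro} together with Lemma~\ref{REconserved}. The vanishing of the boundary contribution at $\varsigma=z$ is identified correctly.

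There is, however, one step your ``careful count of powers'' does not cover. When you differentiate the integrand of $M$ with respect to $z$, one of the resulting terms is
\[
\int_z^w e^g\,\widetilde n\,\partial_{z\varsigma}a\;\frac{c^2+u\sqrt{\partial_\rho P}}{2c^2}\,(\varsigma,z,\hat S)\,\mathrm d\varsigma,
\]
and $\partial_{z\varsigma}a=\partial_{zw}a=-\tfrac{1}{4\partial_\rho F}\partial_\rho\bigl(\partial_\rho a/\partial_\rho F\bigr)$ involves a \emph{second} $\rho$-derivative of $a$. Corollary~\ref{approxforPetc} only records first-order information on $a$, so no power of $\rho$ is directly available for this factor. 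The paper does not attempt to estimate $\partial_{zw}a$ pointwise; instead it pulls the smooth factor $e^g\widetilde n\,(c^2+u\sqrt{\partial_\rho P})/(2c^2)$ out of the integral by the Mean Value Theorem and then integrates the remainder exactly:
\[
\int_z^w \frac{1}{\partial_\rho F}\,\partial_\rho\!\left(\frac{\partial_\rho a}{\partial_\rho F}\right)(\varsigma,z,\hat S)\,\mathrm d\varsigma
=\int_0^\rho \partial_\sigma\!\left(\frac{\partial_\sigma a}{\partial_\sigma F}\right)\mathrm d\sigma
=\frac{\partial_\rho a}{\partial_\rho F}\lesssim 1.
\]
This telescoping is the device that closes the estimate for $b_2^3$ (and, by the same mechanism, for the analogous piece in $\partial_{\hat S}M$). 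You should either incorporate this trick or, alternatively, supplement Corollary~\ref{approxforPetc} with an explicit bound on $\partial_{\rho\rho}a$ before the power count can go through.
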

\begin{proof}
According to \eqref{yaoming}, one needs to show that that 
\begin{equation*}
    e^g |b_2| \leq D,
\end{equation*}
where $|b_2| \leq \sum_{i=1}^5 |b_2^i|$, and  $b_2^i$ is  the i-th term of $b_2$. 

First, by the definition of $M$ given in \eqref{tidleAlphaBeta}, one can obtain 
\begin{align*}
    \partial_w M &= -2\frac{\partial_w \widetilde{n}}{\widetilde{n}^3} \int_{z}^{w} e^g \widetilde{n} \partial_\varsigma a \left(\frac{\lambda_1-\lambda_3}{\lambda_3-\lambda_2}\right)(\varsigma,z,\hat{S}) \text{d}\varsigma - \frac{1}{\widetilde{n}} e^g \partial_w a \left(\frac{\lambda_1-\lambda_3}{\lambda_3-\lambda_2}\right) \\&
    = -2\frac{\partial_w \widetilde{n}}{\widetilde{n}} M - \frac{1}{\widetilde{n}} e^g \partial_w a \left(\frac{\lambda_1-\lambda_3}{\lambda_3-\lambda_2}\right).
\end{align*}
By the same method as how $|M\widetilde{n} \eta| \lesssim \rho^{\frac{3\gamma -1}{4}}$ was obtained in the proof of Lemma \ref{b_3bound}, one has that $|M| \lesssim \rho^{\frac{3\gamma -9}{4}}$, which, along with \eqref{tianrui-1}, \eqref{dwa<1}, \eqref{4.74}, Lemma \ref{napproxrho}-\ref{DerivativeOfGammaHGnaetc}, Corollary \ref{approxforPetc} and Theorem \ref{REbound}, yields
\begin{equation}
    \label{dwMorder}
    |\partial_w M| \lesssim \frac{\rho^{\frac{3\gamma -9}{4}}}{\sqrt{\partial_\rho P}} + \frac{(\partial_\rho P)^{\frac{1}{4}}}{\rho\sqrt{\rho}}.
\end{equation}
Then, by $\eqref{riPDEs}_2$, \eqref{dwn_lowerbound}, \eqref{d2SEstimate}, \eqref{dSnOrder}, \eqref{dwMorder}, Lemma \ref{napproxrho}-\ref{LemmaApproxhandg} and Corollary \ref{approxforPetc}, one has
\begin{align*}
    |b_2^1|e^{g} \leq & \left|e^{g} \eta \widetilde{n} \frac{\partial_z \widetilde{n}}{\partial_w \widetilde{n}}\partial_2 z \partial_w M \right| \\
    \lesssim  &
    (\partial_\rho P)^{\frac{1}{4}} \sqrt{\rho} \left|\frac{\eta}{\widetilde{n}}\right|^2 \left(\sqrt{\partial_\rho P} \rho^{\frac{3\gamma -1}{4}} + \sqrt{\rho} (\partial_\rho P)^{\frac{5}{4}}\right),\\
    |b_2^2|e^{g} =& \left|e^{g} \eta \widetilde{n} \frac{\partial_{\hat{S}} \widetilde{n}}{\partial_w \widetilde{n}}\partial_2 S \partial_w M \right|\\
    \lesssim &
    (\partial_\rho P)^{\frac{1}{4}} \sqrt{\rho} \rho^2 \left|\frac{\eta}{\widetilde{n}}\right|^2 \partial_\rho P |\partial_w M| \\
    \lesssim & (\partial_\rho P)^{\frac{1}{4}} \sqrt{\rho}\left|\frac{\eta}{\widetilde{n}}\right|^2 \left(\sqrt{\partial_\rho P} \rho^{\frac{3\gamma -1}{4}} + \sqrt{\rho} (\partial_\rho P)^{\frac{5}{4}}\right).
\end{align*}

Second, by $\eqref{riPDEs}_2$ and Lemma \ref{napproxrho}-\ref{LemmaApproxhandg}, one has
\begin{equation*}
    |b_2^3|e^{g} = \left|e^{g} \eta \widetilde{n} \partial_2 z \partial_z M \right| \lesssim \frac{(\partial_\rho P)^{\frac{1}{4}}}{\sqrt{\rho}} \left|\frac{\eta}{\widetilde{n}}\right|^2 \rho^3 \partial_\rho P |\partial_z M|.
\end{equation*}
Define the function 
\begin{equation*}
    \Omega(w,z,\hat{S}) \coloneqq e^g \widetilde{n} \partial_w a \frac{\lambda_1 - \lambda_3}{\lambda_3 - \lambda_2} = -e^g \widetilde{n} \partial_w a \frac{(c^2 + u \sqrt{\partial_\rho P})}{2c^2}.
\end{equation*}
According to  \eqref{tidleAlphaBeta}  and the fact that: $z = w$ if and only if $\rho = 0$, one has $\Omega(z,z,\hat{S})=0$ and
\begin{align*}
    \partial_z M &= -2\frac{\partial_z \widetilde{n}}{\widetilde{n}^3} \int_{z}^{w} \Omega(\varsigma,z,\hat{S}) \text{d}\varsigma - \frac{1}{\widetilde{n}^2} \Omega(z,z,\hat{S}) - \frac{1}{\widetilde{n}^2} \int_{z}^{w} \partial_z \Omega(\varsigma,z,\hat{S}) \text{d}\varsigma \\&
    = -2\frac{\partial_z \widetilde{n}}{\widetilde{n}} M  - \frac{1}{\widetilde{n}^2} \int_{z}^{w} \partial_z \Omega(\varsigma,z,\hat{S}) \text{d}\varsigma.
\end{align*}
Now by \eqref{d2SEstimate}, $|M| \lesssim \rho^{\frac{3\gamma-9}{4}}$ and Lemma \ref{napproxrho}, one has
\begin{equation*}
    |b_2^3|e^{g} \lesssim (\partial_\rho P)^{\frac{1}{4}}\sqrt{\rho} \left|\frac{\eta}{\widetilde{n}}\right|^2 \left(\rho^{\frac{3\gamma-1}{4}} + \partial_\rho P \left|\int_{z}^{w} \partial_z \Omega(\varsigma,z,\hat{S}) \ \text{d}\varsigma\right|\right).
\end{equation*}
All the terms associated with the integral involving $\partial_z \Omega$ can be easily bounded by the previous procedures, except for the term
\begin{equation*}
    \int_{z}^{w} e^g \widetilde{n} \partial_{z \varsigma}a \frac{(c^2 + u \sqrt{\partial_\rho P})}{2c^2}(\varsigma,z,\hat{S}) \text{d} \varsigma.
\end{equation*}
As $a(\rho,S)$ is a function independent of $u$, by Lemma \ref{FromrhoUStoRiemannInvariant} and explicit computations, one can obtain that
\begin{equation*}
    \partial_{w z} a = - \frac{1}{4 \partial_\rho F} \partial_\rho \left(\frac{\partial_\rho a}{\partial_\rho F}\right),
\end{equation*}
which, along with the Mean Value Theorem, yields that
\begin{align*}
    &\int_{z}^{w} e^g \widetilde{n} \partial_{z \varsigma}a \frac{(c^2 + u \sqrt{\partial_\rho P})}{2c^2}(\varsigma,z,\hat{S}) \text{d} \varsigma \\&= -\frac{1}{4} e^g(z, w^*, \hat{S}) \widetilde{n}(z, w^*, \hat{S}) \frac{(c^2 + u \sqrt{\partial_\rho P})}{2c^2}(z, w^*, \hat{S})
    \int_{z}^{w} \frac{1}{\partial_\rho F}\partial_\rho \left(\frac{\partial_\rho a}{\partial_\rho F}\right)(\varsigma,z,\hat{S})  \text{d} \varsigma,
\end{align*}
for some  $w^* \in [z,w]$. 

Here, via the change of variable formula, Corollary \ref{approxforPetc}, Theorem \ref{REbound} and its proof, one has 
\begin{equation*}
    \left|\int_{z}^{w} \frac{1}{\partial_\rho F}\partial_\rho \left(\frac{\partial_\rho a}{\partial_\rho F}\right)(\varsigma,z,\hat{S}) \text{d} \varsigma \right| = \left|\int_{0}^{\rho} \partial_\sigma \left(\frac{\partial_\sigma a}{\partial_\sigma F}\right) \text{d} \sigma\right| = \left|\frac{\partial_\rho a}{\partial_\rho F} \right| \lesssim 1.
\end{equation*}
Hence, by Lemma \ref{napproxrho}-\ref{LemmaApproxhandg}, one has
\begin{equation*}
    |b_2^3|e^{g} \lesssim (\partial_\rho P)^{\frac{1}{4}}\sqrt{\rho} \left|\frac{\eta}{\widetilde{n}}\right|^2 \left(\rho^{\frac{3\gamma-1}{4}} + \partial_\rho P \right).
\end{equation*}
Similarly,  $|\partial_{\hat{S}} M| \lesssim 1$ can be proved  by the same argument. Hence, by \eqref{d2SEstimate}, Lemma \ref{napproxrho}-\ref{LemmaApproxhandg}, one has
\begin{align*}
    |b_2^4|e^{g} =& \left|e^{g} \eta \widetilde{n} \partial_2 S \partial_{\hat{S}} M \right| \\
    \lesssim &\frac{(\partial_\rho P)^{\frac{1}{4}}}{\sqrt{\rho}} \left|\frac{\eta}{\widetilde{n}}\right|^2 \rho^3 \sqrt{\partial_\rho P} |\partial_{\hat{S}} M| \lesssim \frac{(\partial_\rho P)^{\frac{1}{4}}}{\sqrt{\rho}} \left|\frac{\eta}{\widetilde{n}}\right|^2 \rho^3 \sqrt{\partial_\rho P},\\
    |b_2^5|e^{g} =& \left|e^{g} \eta \widetilde{n}^2 M \left(\partial_2 \eta - \frac{\eta}{n} \partial_2 \widetilde{n}\right) \right|\\
    \lesssim &  \frac{(\partial_\rho P)^{\frac{1}{4}}}{\sqrt{\rho}} \left|\frac{\eta}{\widetilde{n}}\right| \rho^3 |M| \left|\partial_2 \eta - \frac{\eta}{n} \partial_2 \widetilde{n}\right|.
\end{align*}

Thus, the desired boundedness follows from the above estimates, Lemmas \ref{REconserved} and  \ref{napproxrho}, Corollaries \ref{REconservedCoro}-\ref{approxforPetc}, Theorem \ref{REbound} and its proof.
\end{proof}

\begin{lemma}\label{sumaozhen}
\begin{equation*}
    \frac{|b_4|}{\partial_z \lambda_2 e^{-g}} \leq D. 
\end{equation*}
\end{lemma}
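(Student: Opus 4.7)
The plan is to reduce the estimate for $b_4$ to the estimates already carried out for $b_1$, $b_2$, $b_3$ (equivalently, for $b_0$ and $M\widetilde{n}\eta$) in Lemmas \ref{b_3bound}--\ref{zhengzhi}. Recalling from \eqref{coeffientsB} that
\begin{equation*}
b_4 = e^{-g} M \eta \widetilde{n}\, b_0 + b_1 + b_2 - \partial_z \lambda_2\, e^{-g} M^2 \eta^2 \widetilde{n}^2,
\end{equation*}
one immediately obtains the splitting
\begin{equation*}
\frac{|b_4|}{\partial_z \lambda_2\, e^{-g}} \;\leq\; \frac{|M \eta \widetilde{n}|\,|b_0|}{\partial_z \lambda_2} \;+\; \frac{|b_1|}{\partial_z \lambda_2\, e^{-g}} \;+\; \frac{|b_2|}{\partial_z \lambda_2\, e^{-g}} \;+\; |M\eta\widetilde{n}|^2.
\end{equation*}
The two middle summands are already controlled by $D$ thanks to Lemmas \ref{b_1bound} and \ref{zhengzhi}, so only the first and the last summand need fresh attention.

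For the last summand I would invoke the estimate $|M\widetilde{n}\eta|\lesssim \rho^{(3\gamma-1)/4}$ established inside the proof of Lemma \ref{b_3bound} (Step 1), which via Theorem \ref{REbound} and $\gamma>1$ immediately yields $|M\widetilde{n}\eta|^{2}\lesssim\rho^{(3\gamma-1)/2}\le D$. For the first summand, I would use $\partial_z\lambda_2\approx 1$ (the relation \eqref{yaoming} already derived), combined again with $|M\widetilde{n}\eta|\lesssim\rho^{(3\gamma-1)/4}$, to reduce the task to bounding $\rho^{(3\gamma-1)/4}|b_0|$. Each of the six terms $|b_0^i|$ was explicitly dominated in Step 2 of the proof of Lemma \ref{b_3bound} by an expression of the form $\rho^{p}(\partial_\rho P)^{q}|\eta/\widetilde n|\widetilde n$, possibly with an extra mild logarithmic factor in the $b_0^2$ term. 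Multiplying by the extra prefactor $\rho^{(3\gamma-1)/4}$ only improves the order in $\rho$, while Lemma \ref{REconserved} and Lemma \ref{napproxrho} ensure that $|\eta/\widetilde n|\widetilde n = |\eta|$ is controlled by $\|\partial_x S_0/\widetilde n_0\|_\infty\cdot\widetilde n\lesssim \rho$; together with the uniform upper bound on $\rho$ from Theorem \ref{REbound} this gives the desired $D$-bound for every $\rho^{(3\gamma-1)/4}|b_0^i|$.

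No new structural obstacle arises: Lemma \ref{sumaozhen} is essentially a bookkeeping corollary of the work already done for $b_0$, $b_1$, $b_2$ and of the key algebraic estimate $|M\widetilde n\eta|\lesssim\rho^{(3\gamma-1)/4}$. The only point that deserves some care is the logarithmic factor $|\log\rho|$ that appears in the estimate of $b_0^2$; but since it is multiplied by a strictly positive power of $\rho$ (coming both from $\rho^{(3\gamma-1)/4}$ and from the $(\partial_\rho P)^{3/4}/\sqrt{\rho}\cdot\widetilde n\lesssim\rho^{(3\gamma-2)/4}$ factor already present), the product remains bounded as $\rho\to 0$, and uniformly bounded in view of the uniform upper bound on $\rho$. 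Collecting all contributions yields the claimed inequality $|b_4|/(\partial_z\lambda_2 e^{-g})\le D$.
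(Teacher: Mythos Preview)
Your proposal is correct and follows essentially the same approach as the paper's own proof, which simply invokes the bounds on $b_1$, $b_2$ from Lemmas \ref{b_1bound}--\ref{zhengzhi} together with the boundedness of $|M\widetilde n\eta|$ (and hence of $|b_0|$) obtained in the proof of Lemma \ref{b_3bound}. Your treatment of the first summand is slightly more elaborate than necessary: since Step 2 of Lemma \ref{b_3bound} already shows that $|b_0|$ itself is bounded (not merely $\rho^{(3\gamma-1)/4}|b_0|$), the term $|M\eta\widetilde n|\,|b_0|/\partial_z\lambda_2$ is immediately controlled via $\partial_z\lambda_2\approx 1$ without the term-by-term reanalysis you sketch.
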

\begin{proof}
The desired conclusion follows from the proof of  Lemmas \ref{b_1bound}-\ref{zhengzhi} and the fact that $|M\widetilde{n}\eta|$ is bounded from the proof of Lemma \ref{b_3bound}. 
\end{proof}

  \underline{Now we can give the proof of Theorem \ref{ODEcompare}}\\
We will focus only on $q$, and  $r$ can be dealt with similarly. First, according to Young's inequality, one has
\begin{equation*}
    b_3 q \leq |b_3 q| \leq \frac{\partial_z \lambda_2 e^{-g}}{2} q^2 + \frac{|b_3|^2}{2\partial_z \lambda_2 e^{-g}}, 
\end{equation*}
which, along with  Theorem \ref{REDecoupledODEs}, implies that 
\begin{equation}\label{liweifeng}
    \partial_2 q \leq -\frac{\partial_z \lambda_2 e^{-g}}{2} q^2 + \frac{|b_3|^2}{2\partial_z \lambda_2 e^{-g}} + |b_4|.
\end{equation}
Then the inequality $\eqref{yandqinequality}_2$ follows from \eqref{liweifeng} and Lemmas \ref{b_3bound}-\ref{sumaozhen}.

\subsection{Mass-energy density lower bound for $1 <\gamma < 3$}
 This subsection will be devoted to giving  a sharp enough lower bound estimate on $\rho$ for $1 <\gamma < 3$. For this purpose, we first give the upper  bounds of $r$ and $q$.
\begin{lemma} \label{yqleqYQ}
One has the following bounds on $r$ and $q$
\begin{equation*}
    \begin{cases}
    r \leq \max \{N_1, \sup_x r_0(x)\} \coloneqq \overline{R},\\[6pt]
    q \leq \max \{N_2, \sup_x q_0(x)\} \coloneqq \overline{Q},
    \end{cases}
\end{equation*}
where $r_0(x) = r(0,x)$ and $q_0(x) = q(0,x)$.
\end{lemma}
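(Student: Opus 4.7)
The proof should be a direct comparison/barrier argument using the refined ODE inequalities in Theorem \ref{ODEcompare}, together with the positivity statement from Corollary \ref{e^-gdzlambda_2>0}. I will present the argument for $q$; the argument for $r$ is identical, with $N_1$, $\partial_w \lambda_3$, $e^{-h}$, $\partial_3$, and the characteristic $x^3(t)$ replacing $N_2$, $\partial_z \lambda_2$, $e^{-g}$, $\partial_2$, and $x^2(t)$ respectively.

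The plan is to show that $\overline{Q}$ acts as an upper barrier along every backward $\lambda_2$-characteristic. By definition, $\overline{Q} \geq N_2 \geq 0$, so on the set $\{q \geq \overline{Q}\}$ one has $q^2 \geq \overline{Q}^2 \geq N_2^2$; combining with Corollary \ref{e^-gdzlambda_2>0} (which gives $\partial_z \lambda_2 \, e^{-g} > 0$) and the inequality
\[
    \partial_2 q \leq -\tfrac{1}{2} \partial_z \lambda_2 \, e^{-g} (q^2 - N_2^2)
\]
from Theorem \ref{ODEcompare}, we conclude that $\partial_2 q \leq 0$ whenever $q \geq \overline{Q}$. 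Hence $\overline{Q}$ can never be crossed from below along a $\lambda_2$-characteristic.

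To formalize this, suppose for contradiction there exists $(t^*, x^*) \in [0,T) \times \mathbb{R}$ with $q(t^*, x^*) > \overline{Q}$. Consider the backward characteristic $x^2(\cdot, x_0^2)$ passing through $(t^*, x^*)$, with initial point $(0, x_0^2)$. At $t=0$ we have
\[
    q(0, x_0^2) \leq \sup_x q_0(x) \leq \overline{Q},
\]
while $q(t^*, x^2(t^*)) > \overline{Q}$. Since $t \mapsto q(t, x^2(t))$ is continuous, one can define
\[
    t_1 \coloneqq \sup \{ t \in [0, t^*] \mid q(t, x^2(t)) \leq \overline{Q} \},
\]
so that $q(t_1, x^2(t_1)) = \overline{Q}$ and $q(t, x^2(t)) > \overline{Q}$ for every $t \in (t_1, t^*]$. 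On the open interval $(t_1, t^*)$, the reasoning of the previous paragraph gives $\partial_2 q(t, x^2(t)) \leq 0$, so the function $t \mapsto q(t, x^2(t))$ is non-increasing on $[t_1, t^*]$. This forces $q(t^*, x^2(t^*)) \leq q(t_1, x^2(t_1)) = \overline{Q}$, contradicting $q(t^*, x^*) > \overline{Q}$.

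I do not foresee a genuine obstacle here: all the hard work sits in the estimates that produced Theorem \ref{ODEcompare} and Corollary \ref{e^-gdzlambda_2>0}. The only care needed is the continuity/compactness of the level set argument above, which is standard once one knows $q$ is $C^0$ along characteristics (this follows from $(w,z,S) \in C^1$ together with the explicit definition of $q = e^g \widetilde{\beta} - M \eta \widetilde{n}$ in \eqref{yandq} and the smooth dependence of $e^g$, $M$, $\widetilde{n}$ on the solution).
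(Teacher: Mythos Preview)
Your proof is correct and follows the same barrier/comparison idea as the paper: use the ODE inequality from Theorem \ref{ODEcompare} together with $\partial_z\lambda_2 e^{-g}>0$ to show $q$ cannot cross the threshold from below along a $\lambda_2$-characteristic. The paper organizes the argument slightly differently, splitting into the three cases $q_0(x_0^2)<N_2$, $q_0(x_0^2)>N_2$, $q_0(x_0^2)=N_2$ and using $N_2$ rather than $\overline{Q}$ as the primary barrier (then combining with $\sup_x q_0$ in the second case), but the substance is identical; your single-barrier formulation with $\overline{Q}$ is in fact a bit more streamlined.
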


\begin{proof}
We will focus only on $q$, and  $r$ can be dealt with similarly. We consider 3 cases: $q_0(x_0^2) < N_2$, $q_0(x_0^2) > N_2$, and  $q_0(x_0^2) = N_2$.

First of all, consider the case $q_0(x_0^2) < N_2$. By the continuity of $q$, in a small open interval around $t = 0$, $q(t,x^2(t)) < N_2$. Define time $\widetilde{T}$ by 
\begin{equation}
    \widetilde{T} \coloneqq \inf_t \{t : q(t,x^2(t)) > N_2 \}.
\end{equation}
Then $\widetilde{T} > 0$. Note for $t \in [0,\widetilde{T}]$, one has $q(t,x^2(t)) \leq N_2$. 

It remains to show that $T = \widetilde{T}$. Suppose for contradiction that $\widetilde{T} < T$. Via  the continuity of $q(t,x^2(t))$, there exists some small $\upsilon > 0$, such that if $t \in (\widetilde{T}, \widetilde{T}+\upsilon)$, then $q(t,x^2(t)) > N_2$. By $\eqref{yandqinequality}_2 $, one has 
$$\partial_2 q \leq 0 \quad \text{for} \quad t \in (\widetilde{T}, \widetilde{T}+\upsilon).$$
Integrating this inequality  along  $x^2(t)$  over  $(\widetilde{T},\widetilde{T} + \delta)$ for any $\delta$ satisfying $0 < \delta < \upsilon$, one gets
\begin{equation*}
    q(\widetilde{T} + \delta, x^2(\widetilde{T} + \delta)) \leq q(\widetilde{T}, x^2(\widetilde{T})) \leq  N_2.
\end{equation*}
This is a contradiction to $q(t,x^2(t)) > N_2$ if $t \in (\widetilde{T}, \widetilde{T}+\upsilon)$, so one must have $\widetilde{T} = T$.

Second, consider the case $q_0(x_0^2) > N_2$. Once again by the continuity of $q$, in a small open interval around $t = 0$, one has 
$q(t,x^2(t)) > N_2$. Define time $\widetilde{T}$ by 
\begin{equation*}
    \widetilde{T} \coloneqq \inf_t \{t : q(t,x^2(t)) < N_2 \}.
\end{equation*}
Then $\widetilde{T} > 0$. Note for $t \in [0,\widetilde{T}]$, $q(t,x^2(t)) \geq  N_2$. By $\eqref{yandqinequality}_2$, one has
$$\partial_2 q \leq 0 \quad \text{for} \quad t \in [0,\widetilde{T}].$$
 Integrating this inequality  along  $x^2(t)$ over $[0,t]$ for any $t\in (0,\widetilde{T}]$, one has 
$$q(t,x^2(t)) \leq q_0 (x_0^2) \leq \sup_x q_0(x) \quad \text{ for} \quad t \in [0,\widetilde{T}],  $$
so $q$ has the upper bound $\overline{Q}$ in this interval. Now by the definition of $\widetilde{T}$ and the continuity of $q$, there exists some $\upsilon > 0$ such that $q(t,x^2(t)) < N_2$ on $t \in (\widetilde{T}, \widetilde{T} + \upsilon)$. In particular, fix a $\delta$ such that $0 < \delta < \upsilon$, one has $q(\widetilde{T} + \delta,x^2(\widetilde{T} + \delta)) < N_2$. Then the rest of the proof  is reduced to the case $q_0(x_0^2) < N_2$. Hence $q(t,x^2(t)) \leq \overline{Q}$. 

Finally,  the case $q_0(x_0^2) = N_2$ can be dealt with  similarly. 
\end{proof}

The lower bound on the $\rho $ now can  be given along the characteristics $x^2(t)$ and $x^3(t)$. 

\begin{lemma} \label{DensityLowerBound}
Assume  $1 < \gamma < 3$. One has 
\begin{equation*}
    \rho^{\frac{3-\gamma}{4}}(t,x^2(t)) \geq \frac{1}{D(t+1)}, \quad \rho^{\frac{3-\gamma}{4}}(t,x^3(t)) \geq \frac{1}{D(t+1)}.
\end{equation*}
 
\end{lemma}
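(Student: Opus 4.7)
The plan is to convert the one-sided uniform bounds $r\leq\overline{R}$ and $q\leq\overline{Q}$ from Lemma \ref{yqleqYQ} into a Bernoulli-type lower-bound ODE for the Riemann quantity $F(\rho,S)=(z-w)/2$ along the characteristics $x^{2}(t)$ and $x^{3}(t)$. Since Corollary \ref{approxforPetc} gives $F\approx\rho^{(\gamma-1)/2}$, a decay estimate from below for $F$ will translate directly into the claimed decay estimate for $\rho$.

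I first treat the bound along $x^{2}(t)$; the argument along $x^{3}(t)$ is symmetric. Combining the two equations in \eqref{riPDEs} with the identity $\partial_{i}S=(\lambda_{i}-\lambda_{1})\eta$ (which follows from $\eqref{releul1D}_{3}$ and $\lambda_{1}=u$) produces
\begin{equation*}
\partial_{2}(z-w)=-a(\partial_{2}S+\partial_{3}S)+(\lambda_{3}-\lambda_{2})\,\partial_{x}w.
\end{equation*}
I then expand $\partial_{x}w=\widetilde{\alpha}+a\eta=e^{-h}(r+L\eta\widetilde{n})+a\eta$ via \eqref{tidleAlphaBeta}--\eqref{yandq}. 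Because $\lambda_{2}-\lambda_{3}>0$ by Proposition \ref{EigvenvalueIneq} and $r\leq\overline{R}$, the above identity yields the one-sided bound
\begin{equation*}
\partial_{2}(z-w)\geq-(\lambda_{2}-\lambda_{3})\bigl[e^{-h}\overline{R}+e^{-h}|L\eta\widetilde{n}|+|a\eta|\bigr]-|a(\partial_{2}S+\partial_{3}S)|.
\end{equation*}

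The next step is a careful size tracking in $\rho$. From Corollary \ref{approxforPetc}, Lemmas \ref{napproxrho}--\ref{LemmaApproxhandg}, and the conservation identity of Lemma \ref{REconserved} (which, together with $\widetilde{n}\approx\rho$, gives $|\eta|\lesssim\rho$), the relevant sizes are
\begin{equation*}
\lambda_{2}-\lambda_{3}\approx\rho^{(\gamma-1)/2},\quad e^{-h}\approx\rho^{(3-\gamma)/4},\quad |a|\lesssim\rho^{(\gamma-1)/2},\quad |L\eta\widetilde{n}|\lesssim\rho^{(3\gamma-1)/4},
\end{equation*}
the last estimate following from the same dyadic integration used to control $|M\widetilde{n}\eta|$ in the proof of Lemma \ref{b_3bound}. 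For $1<\gamma<3$ the dominant contribution is $(\lambda_{2}-\lambda_{3})e^{-h}\overline{R}\lesssim\rho^{(\gamma+1)/4}$, whereas every remaining term is of strictly higher order in $\rho$ uniformly on the bounded range of $\rho$ given by Theorem \ref{REbound}. Consequently
\begin{equation*}
\partial_{2}(z-w)\geq-D_{1}\,\rho^{(\gamma+1)/4}
\end{equation*}
for a constant $D_{1}$ depending only on the initial data, $\gamma$, $c$ and $R$.

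Finally, setting $p\coloneqq(\gamma+1)/(2(\gamma-1))$ and using $\rho^{(\gamma+1)/4}\approx F^{p}$, the inequality becomes $\partial_{2}F\geq-D_{2}F^{p}$. Since $p>1$ whenever $1<\gamma<3$, dividing by $F^{p}$ gives $\partial_{2}(F^{1-p})\leq D_{2}(p-1)$; integrating along $x^{2}(t)$ and using that $F_{0}$ is bounded above yields $F^{1-p}(t,x^{2}(t))\leq D(1+t)$, which inverts to $F(t,x^{2}(t))\geq(D(1+t))^{-2(\gamma-1)/(3-\gamma)}$. Invoking $\rho\approx F^{2/(\gamma-1)}$ one more time gives $\rho^{(3-\gamma)/4}(t,x^{2}(t))\geq 1/(D(1+t))$, as required. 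The bound along $x^{3}(t)$ follows from the symmetric identity $\partial_{3}(z-w)=-a(\partial_{2}S+\partial_{3}S)+(\lambda_{3}-\lambda_{2})\partial_{x}z$, writing $\partial_{x}z=\widetilde{\beta}-a\eta=e^{-g}(q+M\eta\widetilde{n})-a\eta$ and using $q\leq\overline{Q}$. The main technical obstacle is the exponent bookkeeping: one must verify that $(\lambda_{2}-\lambda_{3})e^{-h}\overline{R}\approx\rho^{(\gamma+1)/4}$ really is the dominant contribution, that the $L\eta\widetilde{n}$-term is controlled by $\rho^{(3\gamma-1)/4}$ via the dyadic argument, and that the $a\eta$- and $a(\partial_{2}S+\partial_{3}S)$-terms are subdominant, all uniformly on the bounded range of $\rho$ from Theorem \ref{REbound}.
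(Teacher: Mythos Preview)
Your proposal is correct and follows essentially the same route as the paper: both start from the identity $\partial_2(z-w)=-2a\eta(\lambda_2-\lambda_1)-(\lambda_2-\lambda_3)e^{-h}(r+L\eta\widetilde{n})$, invoke $r\leq\overline{R}$ from Lemma~\ref{yqleqYQ}, track sizes in $\rho$ via Corollary~\ref{approxforPetc} and Lemmas~\ref{napproxrho}--\ref{LemmaApproxhandg}, and integrate the resulting differential inequality along the characteristic. The only cosmetic difference is that the paper converts to $\partial_2\rho$ through the chain rule $\partial_2(z-w)=2\partial_\rho F\,\partial_2\rho+2\partial_SF\,\partial_2S$ and multiplies by the weight $e^{h}/(\partial_\rho P)^{1/4}$ before bounding the right-hand side, whereas you keep the inequality in terms of $F$ and recognise the Bernoulli form $\partial_2F\geq-DF^{p}$; the exponent bookkeeping is the same. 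One slip to fix: in the integration step you need $F_0$ bounded \emph{below} (available from Assumption~\ref{REAssumpInitialData}, since $\inf_x(z_0-w_0)>0$), not bounded above, because $1-p<0$.
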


\begin{proof}
We will focus only on $\rho^{\frac{3-\gamma}{4}}(t,x^2(t))$, and  $\rho^{\frac{3-\gamma}{4}}(t,x^3(t))$ can be dealt with similarly.
First, according to $\eqref{releul1D}_3$, \eqref{tidleAlphaBeta}, \eqref{yandq} and \eqref{riPDEs}, one has
\begin{equation}\label{haojunmin}
\begin{split}
    \partial_2(z-w) &= -a\partial_2 S -(\partial_t w + \lambda_2 \partial_x w) \\
    &= -a\partial_2 S -(\partial_3 w + (\lambda_2-\lambda_3) \partial_x w) \\
    &= -a\partial_2 S -a \partial_3 S - (\lambda_2-\lambda_3) \partial_x w\\
    &= -2a\eta (\lambda_2-\lambda_1) - (\lambda_2-\lambda_3) \widetilde{\alpha} \\
 &= -2a\eta (\lambda_2-\lambda_1) - (\lambda_2-\lambda_3) r e^{-h} - (\lambda_2-\lambda_3) L\eta \widetilde{n} e^{-h},
 \end{split}
\end{equation}
which, along with  Proposition \ref{EigvenvalueIneq}, Lemma \ref{yqleqYQ} and the fact 
\begin{equation*}
\begin{split}
    -2a\eta(\lambda_2-\lambda_1) \geq & -2|a||\eta| (\lambda_2-\lambda_1),\\
    -L\eta \widetilde{n} (\lambda_2-\lambda_3)e^{-h} \geq & -|L||\eta| \widetilde{n} (\lambda_2-\lambda_3)e^{-h},
    \end{split}
\end{equation*}
yields that
\begin{equation}
    \label{d2(z-w)}
    \partial_2 (z-w) \geq -2|a||\eta| (\lambda_2-\lambda_1) -|L||\eta| \widetilde{n} (\lambda_2-\lambda_3)e^{-h} - (\lambda_2-\lambda_3) \overline{R} e^{-h}.
\end{equation}
Notice that, according to \eqref{F}, one has 
\begin{equation*}
    \partial_2 (z-w) = 2\partial_\rho F \partial_2 \rho +2\partial_S F \partial_2 S,
\end{equation*}
which, along with \eqref{d2(z-w)}, yields
\begin{equation*}
    2\partial_\rho F \partial_2 \rho \geq -2|a||\eta| (\lambda_2-\lambda_1) -|L||\eta| \widetilde{n} (\lambda_2-\lambda_3)e^{-h} - (\lambda_2-\lambda_3) \overline{R} e^{-h} - 2|\partial_S F| |\partial_2 S| .
\end{equation*}
Multiplying the above inequality on  both sides by $\frac{e^{h}}{(\partial_\rho P)^{\frac{1}{4}}} >0$, one has that
\begin{equation}
    \label{d2rho1}
    \begin{split}
    2\frac{e^{h}}{(\partial_\rho P)^{\frac{1}{4}}} \partial_\rho F \partial_2 \rho \geq & -2\frac{e^{h}}{(\partial_\rho P)^{\frac{1}{4}}} |a||\eta| (\lambda_2-\lambda_1) -\frac{1}{(\partial_\rho P)^{\frac{1}{4}}} |L||\eta| \widetilde{n} (\lambda_2-\lambda_3) \\ &- \frac{1}{(\partial_\rho P)^{\frac{1}{4}}} (\lambda_2-\lambda_3) \overline{R}  - 2\frac{e^{h}}{(\partial_\rho P)^{\frac{1}{4}}}|\partial_S F| |\partial_2 S| .
    \end{split}
\end{equation}
Via the similar argument used  in the last subsection, one can check that 
\begin{equation*}
\begin{split}
    \frac{e^{h}}{(\partial_\rho P)^{\frac{1}{4}}}|a||\eta| (\lambda_2-\lambda_1) \leq & D,\quad 
    \frac{1}{(\partial_\rho P)^{\frac{1}{4}}} |L||\eta| \widetilde{n} (\lambda_2-\lambda_3) \leq  D,\\
    \frac{1}{(\partial_\rho P)^{\frac{1}{4}}} (\lambda_2-\lambda_3) \overline{R} \leq  & D,\quad 
    \frac{e^{h}}{(\partial_\rho P)^{\frac{1}{4}}} |\partial_S F| |\partial_2 S| \leq  D,
    \end{split}
\end{equation*}
which, along with \eqref{d2rho1}, implies  that
\begin{equation}
    \label{d2rho2}
    2\frac{e^{h}}{(\partial_\rho P)^{\frac{1}{4}}} \partial_\rho F \partial_2 \rho \geq -D.
\end{equation}
According to Lemma \ref{LemmaApproxhandg} and Corollary \ref{approxforPetc}, one has $\frac{e^{h}}{(\partial_\rho P)^{\frac{1}{4}}} \partial_\rho F \approx \rho^{\frac{\gamma-5}{2}}$, which, along with  \eqref{d2rho2}, yields
\begin{equation*}
    \partial_2 \left( \rho^{\frac{\gamma-3}{2}} \right) \leq D.
\end{equation*}
 Integrating the above inequality along the characteristic $x^2(t)$, one finally gets 
\begin{equation*}
    \rho^{\frac{\gamma-3}{2}} (t,x^2(t)) \leq D( t + 1).
\end{equation*}
\end{proof}

\subsection{Singularity formation with strong compression: Proof of Theorem \ref{GradientBlowUp}}

We will focus only on $q$, and  $r$ can be dealt with similarly. Due to  $q_0(x^*) < -N_2$, one has:  $\exists \  \upsilon>0$ such that 
$$q_0(x^*) \leq -N_2 \sqrt{1+\upsilon},$$
which, along with   Theorem \ref{ODEcompare}, implies that 
\begin{equation}\label{fuxing}
   q(t,x^2(t)) \leq q_0(x^*) \leq -N_2 \sqrt{1+\upsilon} \quad \forall \ t \in [0,T].
\end{equation}

According to  Theorem \ref{ODEcompare} and \eqref{fuxing}, one can obtain 
\begin{align*}
    \partial_2 q  \leq & - \frac{1}{2} \partial_z \lambda_2 e^{-g} q^2 +  \frac{1}{2} \partial_z \lambda_2 e^{-g} N_2^2\\
     \leq & - \frac{1}{2} \partial_z \lambda_2 e^{-g} q^2 +  \frac{1}{2} \partial_z \lambda_2 e^{-g} \frac{q^2}{1+\upsilon}\\
    = & - \frac{1}{2} \partial_z \lambda_2 e^{-g} q^2 \frac{\upsilon}{1+\upsilon},
\end{align*}
which yields that 
\begin{equation*}
    -\partial_2 \left(\frac{1}{q}\right) \leq - \frac{1}{2} \partial_z \lambda_2 e^{-g}  \frac{\upsilon}{1+\upsilon}.
\end{equation*}
Integrating  the above inequality along the characteristic $x^2(t)$, one gets 
\begin{equation*}
    \frac{1}{q(t,x^2(t))} \geq \frac{1}{q_0(x^*)} + \frac{\upsilon}{2(1+\upsilon)} \int_{0}^{t} \partial_z \lambda_2 e^{-g}(s,x^2(s)) \text{d}s.
\end{equation*}
As $q_0(x^*) < 0$, for  proving  this  theorem, it is sufficient to show
\begin{equation}
    \label{inttoinfty}
    \int_{0}^{t} \partial_z \lambda_2 e^{-g}(s,x^2(s)) \text{d}s \to \infty \quad \text{as} \quad t\to \infty.
\end{equation}
According to \eqref{yaoming}, Corollary \ref{approxforPetc}-\ref{e^-gdzlambda_2>0} and Lemma \ref{LemmaApproxhandg}, one has  $ \partial_z \lambda_2 e^{-g} > 0$ and $ \partial_z \lambda_2 e^{-g} \approx \rho^{\frac{3-\gamma}{4}}$. From this, for the case $\gamma \geq 3$, then
$$\rho^{\frac{3-\gamma}{4}} \geq (\sup_{(t,x) \in [0,T) \times \mathbb{R}} \rho)^{\frac{3-\gamma}{4}},$$ 
so \eqref{inttoinfty} will be satisfied. Otherwise, for the case  $1<\gamma<3$, by Lemma \ref{DensityLowerBound}, \eqref{inttoinfty} is also satisfied. Hence Theorem \ref{GradientBlowUp} is proved.

\begin{appendices}

\section{On the equation of state for the polytropic gas}

This subsection will be devoted to showing  some properties on the equation of state for the polytropic gas.

\begin{proposition} \label{AProp_nandrho}
$\rho$, $n$ and $S$ satisfy the relation
\begin{equation}
    \label{nrhoS}
    n^\gamma \exp\left(\frac{S}{C_v}\right) + c^2(n-\rho) = 0.
\end{equation}
Moreover, if $|S|\leq C$ for some positive constant $C$,   one has 
\begin{equation}
    \label{rhoto0}
    n(\rho,S) \sim \mathcal{O}(\rho) \quad \text{as} \quad \rho \to 0.
\end{equation}
\end{proposition}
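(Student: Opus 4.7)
The plan is straightforward, as the statement is essentially an algebraic reorganization of formulas already derived in Section 2.

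First, I would establish the identity \eqref{nrhoS} by combining the two expressions for the pressure that appear in the constitutive derivation. Specifically, \eqref{formula-5} gives $P = c^2(\gamma-1)(\rho - n)$, while \eqref{PnS} gives $P = (\gamma-1) n^\gamma \exp(S/C_v)$. Equating these two, cancelling the common factor $(\gamma-1)>0$, and rearranging yields exactly
\[
n^\gamma \exp\!\left(\tfrac{S}{C_v}\right) + c^2(n - \rho) = 0.
\]

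For the asymptotic behaviour, I would rewrite the relation as
\[
\rho = n + \frac{n^\gamma}{c^2}\exp\!\left(\tfrac{S}{C_v}\right),
\]
so that
\[
\frac{\rho}{n} = 1 + \frac{n^{\gamma-1}}{c^2}\exp\!\left(\tfrac{S}{C_v}\right).
\]
Since $n \geq 0$ and $\exp(S/C_v)\geq 0$, the identity $\rho = n + \frac{n^\gamma}{c^2}\exp(S/C_v)$ forces $0 \leq n \leq \rho$. Hence as $\rho \to 0$ one has $n \to 0$ as well, and, using $\gamma > 1$ together with the uniform bound $|S|\leq C$ (which controls $\exp(S/C_v)$), the correction term $\frac{n^{\gamma-1}}{c^2}\exp(S/C_v)$ tends to zero. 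Therefore $\rho/n \to 1$ as $\rho \to 0$, which is precisely the desired asymptotic $n(\rho,S) \sim \mathcal{O}(\rho)$.

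I do not anticipate any genuine obstacle here: the first assertion is algebraic, and the second is an elementary asymptotic extracted by isolating $\rho$ in terms of $n$. The only subtle point worth being explicit about is that $n$ should be regarded as the continuous function of $(\rho,S)$ implicitly defined by \eqref{nrhoS}, whose existence and monotonicity in $\rho$ follow from the Implicit Function Theorem applied to $F(n;\rho,S) := n^\gamma \exp(S/C_v) + c^2(n-\rho)$, since $\partial_n F = \gamma n^{\gamma-1}\exp(S/C_v) + c^2 \geq c^2 > 0$.
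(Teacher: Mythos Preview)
Your proposal is correct and follows essentially the same approach as the paper: derive \eqref{nrhoS} by equating \eqref{PnS} and \eqref{formula-5}, invoke the implicit function theorem via $\partial_n G>0$, and obtain the asymptotic by dividing through by $n$ to get $\rho/n = 1 + c^{-2}n^{\gamma-1}\exp(S/C_v)\to 1$. The only cosmetic difference is that you deduce $n\to 0$ from the inequality $0\le n\le \rho$ read off the rearranged identity, whereas the paper cites \eqref{nrho} to conclude $\rho=0\iff n=0$; both are equally valid.
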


\begin{proof}
First,  \eqref{nrhoS} can be obtained directly from  \eqref{PnS} and \eqref{formula-5}. 

Second, denote 
\begin{equation*}
    G(n,\rho,S) \coloneqq n^\gamma \exp\left(\frac{S}{C_v}\right) + c^2(n-\rho).
\end{equation*}
Due to  $n \geq 0$, a simple calculation will yield that  $\frac{\partial G}{\partial n} >0$.
Hence by the implicit function theorem, one can  express $n$ as a function of $\rho$ and $S$.

 Since  $n \geq 0$ and $\rho \geq 0$, it follows from \eqref{nrho} that  $\rho = 0\Longleftrightarrow n = 0$.  We show  $\rho$ and $n$ tends to $0$ at the same decay  rate. When $n >0$, we can divide \eqref{nrhoS} by $n$ and obtain 
\begin{equation*}
  \frac{\rho}{n}=1+  \frac{1}{c^2}n^{\gamma-1} \exp\left(\frac{S}{C_v}\right).
\end{equation*}
Since 
$
n(\rho,S) \rightarrow 0 \ \text{as} \ \rho \rightarrow 0
$,
one has 
\begin{equation*}
  \lim_{\rho\rightarrow 0}\frac{\rho}{n(\rho,S)}=1,
\end{equation*}
which implies  \eqref{rhoto0}.
\end{proof}

\begin{proposition} \label{AProp_drhoPetc}
If the pressure $P$ is given in terms of $(\rho,S)$, i.e., $P=P(\rho,S)$, then 
\begin{equation}
\begin{split}
    \label{drhoP}
    \partial_\rho P =& \frac{c^2 \gamma(\gamma-1)n^{\gamma-1} \exp\left(\frac{S}{C_v}\right)}{\gamma n^{\gamma-1} \exp\left(\frac{S}{C_v}\right)+c^2},\\[1mm]
    \partial_S P =&\frac{1}{C_v}\frac{c^2 (\gamma-1)n^{\gamma} \exp\left(\frac{S}{C_v}\right)}{\gamma n^{\gamma-1} \exp\left(\frac{S}{C_v}\right)+c^2},\\[1mm]
    \partial_{\rho S} P =& \frac{c^4 \gamma  (\gamma-1)n^{\gamma-1} \exp\left(\frac{S}{C_v}\right)\left(c^2 + n^{\gamma-1}\exp\left(\frac{S}{C_v}\right) \right)}{C_v\left(\gamma n^{\gamma-1} \exp\left(\frac{S}{C_v}\right)+c^2 \right)^3},\\[1mm]
    \partial_{\rho \rho} P =& \frac{c^6 \gamma  (\gamma-1)^2n^{\gamma-2} \exp\left(\frac{S}{C_v}\right)}{\left(\gamma n^{\gamma-1} \exp\left(\frac{S}{C_v}\right)+c^2 \right)^3}.
    \end{split}
\end{equation}
\end{proposition}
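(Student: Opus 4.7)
The proposal is to start from the two ingredients already in hand: the constitutive identity $P=(\gamma-1)n^{\gamma}\exp(S/C_v)$ from \eqref{PnS}, and the implicit relation $n^{\gamma}\exp(S/C_v)+c^{2}(n-\rho)=0$ established in Proposition \ref{AProp_nandrho}. Since the proof of that proposition already verified that $\partial_n G>0$ with $G(n,\rho,S)=n^{\gamma}e^{S/C_v}+c^{2}(n-\rho)$, the implicit function theorem licenses the differentiations in $\rho$ and $S$ needed below.

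First, I would differentiate the relation $G(n(\rho,S),\rho,S)=0$ once in $\rho$ and once in $S$ to obtain the two building blocks
\begin{equation*}
\partial_\rho n=\frac{c^{2}}{\gamma n^{\gamma-1}\exp(S/C_v)+c^{2}},\qquad \partial_S n=-\frac{n^{\gamma}\exp(S/C_v)/C_v}{\gamma n^{\gamma-1}\exp(S/C_v)+c^{2}}.
\end{equation*}
Then, applying the chain rule to $P=(\gamma-1)n^{\gamma}\exp(S/C_v)$, the formulae for $\partial_\rho P$ and $\partial_S P$ follow immediately; for $\partial_S P$ the two pieces coming from the explicit $S$-dependence and from $\partial_S n$ combine and telescope (factoring out $(\gamma-1)n^{\gamma}e^{S/C_v}/C_v$) to leave precisely $c^{2}/(\gamma n^{\gamma-1}e^{S/C_v}+c^{2})$.

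For the second-order derivatives I would simply iterate. Differentiating the closed-form expression for $\partial_\rho P$ in $S$ (respectively in $\rho$), one encounters only $\partial_S n$ (respectively $\partial_\rho n$) in the numerator and a single additional copy of $\gamma n^{\gamma-1}e^{S/C_v}+c^{2}$ in the denominator via the quotient rule. Substituting the two building blocks and collecting terms over the common denominator $\bigl(\gamma n^{\gamma-1}e^{S/C_v}+c^{2}\bigr)^{3}$ yields the stated expressions for $\partial_{\rho S}P$ and $\partial_{\rho\rho}P$ after cancellations that mirror the one encountered in the $\partial_S P$ step.

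There is no real analytic obstacle here; the proposition is entirely computational. The only place that requires a bit of care is the bookkeeping in the quotient-rule step for $\partial_{\rho S}P$ and $\partial_{\rho\rho}P$, where one has to keep track of a factor of $\gamma(\gamma-1)$ coming from differentiating $n^{\gamma-1}$ once more, and verify that the leftover cross terms combine into the clean numerators $c^{2}+n^{\gamma-1}e^{S/C_v}$ and $1$ displayed in \eqref{drhoP}. Accordingly, the proof I would write is essentially a three-line implicit-differentiation identity for $\partial_\rho n,\partial_S n$ followed by four short chain-rule computations.
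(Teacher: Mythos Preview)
Your proposal is correct and follows essentially the same route as the paper: implicit differentiation of $G(n,\rho,S)=0$ to obtain $\partial_\rho n$ and $\partial_S n$, then chain-rule computations for the four derivatives of $P$. The only cosmetic difference is that for the second-order terms the paper writes the chain rule through the intermediate function $\widetilde{P}(n,S)$ and computes $\partial^2 n/\partial\rho\partial S$ separately, whereas you differentiate the closed form of $\partial_\rho P$ directly; both amount to the same bookkeeping.
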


\begin{proof}
We consider the relation $n=n(\rho,S)$.  It follows from $G(n,\rho,S)=0$ that 
\begin{equation}\label{tianrui-1}
\begin{split}
    \partial_\rho n =&   \frac{c^2}{\gamma n^{\gamma-1} \exp\left(\frac{S}{C_v}\right)+c^2},\quad 
    \partial_S n  = \frac{-n^\gamma \exp\left(\frac{S}{C_v}\right)}{C_v\left(\gamma n^{\gamma-1} \exp\left(\frac{S}{C_v}\right)+c^2\right)}.
    \end{split}
\end{equation}

     Denote  $P(\rho,S) = \widetilde{P}\left(n(\rho,S),S\right)$. By the chain rule, one has 
\begin{equation}
    \partial_\rho P = \partial_n \widetilde{P} \partial_\rho n, \quad 
    \partial_S P = \partial_n \widetilde{P} \partial_S n + \partial_S \widetilde{P},
\end{equation}
which, along with \eqref{PnS}  and \eqref{tianrui-1}, yields $\eqref{drhoP}_1-\eqref{drhoP}_2$.

For the second derivatives, by the chain rule, one has 
\begin{equation}\label{chairui}
    \frac{\partial^2 P}{\partial \rho \partial S} = \frac{\partial^2 \widetilde{P}}{\partial n \partial S} \frac{\partial n}{\partial \rho} + \frac{\partial^2 n}{\partial \rho \partial S} \frac{\partial \widetilde{P}}{\partial n} + \frac{\partial^2 \widetilde{P}}{\partial n^2} \frac{\partial n}{\partial \rho} \frac{\partial n}{\partial S}.
\end{equation}
It follows from \eqref{PnS} that 
\begin{equation*}
\begin{split}
    \frac{\partial^2 \widetilde{P}}{\partial n^2} =& \gamma(\gamma-1)^2 n^{\gamma-2} \exp\left(\frac{S}{C_v}\right),\\
    \frac{\partial^2 \widetilde{P}}{\partial n \partial S} =& \gamma(\gamma-1) \frac{1}{C_v} n^{\gamma-1} \exp\left(\frac{S}{C_v}\right),\\
    \frac{\partial^2 n}{\partial \rho \partial S} =& 
    - \frac{c^2 \gamma n^{\gamma-1} \exp\left(\frac{S}{C_v}\right)\left(n^{\gamma-1} \exp\left(\frac{S}{C_v}\right) +c^2\right)}{C_v\left(\gamma n^{\gamma-1} \exp\left(\frac{S}{C_v}\right)+c^2 \right)^3},
    \end{split}
\end{equation*}
which, along with \eqref{chairui}, yields $\eqref{drhoP}_3$. Similarly, one can obtain $\eqref{drhoP}_4$.
\end{proof}

\begin{proposition} \label{AProp_propertiesofP}
\begin{equation}
    \label{positiveP}
    \begin{split}
    P(\rho,S) \geq & 0, \quad P(\rho,S) = 0 \iff \rho = 0,\\
    \partial_\rho P(\rho,S) \geq & 0, \quad \partial_\rho P(\rho,S) = 0 \iff \rho = 0.
    \end{split}
\end{equation}
\end{proposition}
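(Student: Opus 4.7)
The proof should be essentially a direct computation leveraging the explicit formulas already established in Propositions \ref{AProp_nandrho} and \ref{AProp_drhoPetc}, so the plan is short.

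First, I would invoke \eqref{PnS}, namely $P = (\gamma-1)n^\gamma \exp(S/C_v)$. Since $\gamma>1$, the prefactor $(\gamma-1)$ is positive, the exponential is positive, and by Proposition \ref{AProp_nandrho} we have $n(\rho,S)\geq 0$. Therefore $P(\rho,S)\geq 0$. For the equivalence $P(\rho,S)=0 \iff \rho=0$, the exponential and $(\gamma-1)$ factors never vanish, so $P=0$ iff $n=0$; and the relation \eqref{nrho} (equivalently \eqref{nrhoS}) together with $n,\rho\geq 0$ gives $n=0 \iff \rho=0$, which was already noted in the proof of Proposition \ref{AProp_nandrho}. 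This handles the first line of \eqref{positiveP}.

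Next, I would appeal to $\eqref{drhoP}_1$ in Proposition \ref{AProp_drhoPetc}:
\begin{equation*}
    \partial_\rho P = \frac{c^2\gamma(\gamma-1)\,n^{\gamma-1}\exp(S/C_v)}{\gamma n^{\gamma-1}\exp(S/C_v)+c^2}.
\end{equation*}
With $\gamma>1$, $c>0$ and $n\geq 0$, the numerator is non-negative and the denominator is strictly positive (bounded below by $c^2$). Hence $\partial_\rho P\geq 0$. Moreover $\partial_\rho P=0$ iff $n^{\gamma-1}=0$ iff $n=0$, and, again by Proposition \ref{AProp_nandrho}, iff $\rho=0$. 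This yields the second line of \eqref{positiveP}.

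There is no real obstacle here; the only subtlety is being careful about the limiting case $\rho\to 0^+$ (where $n\to 0^+$) and the fact that $\gamma>1$ is needed to ensure $n^{\gamma-1}$ vanishes precisely at $n=0$ rather than producing a singular expression. Everything else is an immediate consequence of the positivity of the constants $c$, $\gamma-1$, $C_v$, the exponential, and the sign information for $n(\rho,S)$ already extracted from \eqref{nrhoS}.
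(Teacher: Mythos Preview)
Your proposal is correct and follows essentially the same approach as the paper: the paper also derives $\eqref{positiveP}_1$ directly from \eqref{PnS} together with the equivalence $n=0\iff\rho=0$, and $\eqref{positiveP}_2$ from the explicit formula $\eqref{drhoP}_1$ together with that same equivalence. Your write-up simply spells out the positivity of each factor in slightly more detail than the paper's two-line proof.
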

\begin{proof}

  $\eqref{positiveP}_1$ follows from \eqref{PnS}  and the fact that $n = 0$ if and only if $\rho = 0$. 
$\eqref{positiveP}_2$ follows from  \eqref{drhoP} and the fact that $n = 0$ if and only if $\rho = 0$. 
\end{proof}

It follows from Propositions    \ref{AProp_nandrho} and  \ref{AProp_drhoPetc}  that 
\begin{proposition}\label{AProp_OrderOfPetc}
\begin{equation}
    \label{Pto0}
    \begin{split}
    P(\rho,S) & \sim \mathcal{O}(\rho^\gamma) \quad \text{as} \quad \rho \to 0;\ \  
    \partial_\rho P(\rho,S)  \sim \mathcal{O}(\rho^{\gamma-1}) \quad \text{as} \quad \rho \to 0;\\
    \partial_S P(\rho,S) & \sim \mathcal{O}(\rho^\gamma) \quad \text{as} \quad \rho \to 0; \ \ 
    \partial_{S \rho} P(\rho,S)  \sim \mathcal{O}(\rho^{\gamma-1}) \quad \text{as} \quad \rho \to 0.
    \end{split}
\end{equation}
\end{proposition}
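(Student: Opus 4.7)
The plan is to reduce each asymptotic statement to the relation $n(\rho,S) \sim \mathcal{O}(\rho)$ as $\rho \to 0$ established in Proposition \ref{AProp_nandrho}, and then substitute into the explicit formulas derived in Proposition \ref{AProp_drhoPetc}. Throughout, since the bound on $S$ assumed in the paper implies $\exp(S/C_v)$ is uniformly bounded above and below by positive constants, every factor involving $\exp(S/C_v)$ can be treated as an order-one coefficient.

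First, for $P$ itself, I would use the representation \eqref{PnS}, namely $P = (\gamma-1) n^\gamma \exp(S/C_v)$. Since $n \sim \mathcal{O}(\rho)$ as $\rho \to 0$ by Proposition \ref{AProp_nandrho}, raising to the power $\gamma$ (with $\gamma > 1$) immediately gives $P \sim \mathcal{O}(\rho^\gamma)$. Next, for $\partial_\rho P$, I would use the explicit formula $\eqref{drhoP}_1$: the numerator is proportional to $n^{\gamma-1}$, and since $\gamma > 1$, $n^{\gamma-1} \to 0$ as $n \to 0$, so the denominator tends to $c^2$, a positive constant. Hence $\partial_\rho P \sim c^2 \gamma(\gamma-1) n^{\gamma-1} \exp(S/C_v) / c^2 \sim \mathcal{O}(n^{\gamma-1}) \sim \mathcal{O}(\rho^{\gamma-1})$.

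The same argument handles $\partial_S P$ via $\eqref{drhoP}_2$ (numerator of order $n^\gamma$, denominator of order one, giving $\mathcal{O}(\rho^\gamma)$) and $\partial_{\rho S} P$ via $\eqref{drhoP}_3$: the factor $(c^2 + n^{\gamma-1}\exp(S/C_v))$ tends to $c^2$, the denominator $(\gamma n^{\gamma-1}\exp(S/C_v) + c^2)^3$ tends to $c^6$, and the remaining factor is of order $n^{\gamma-1}$, which yields $\mathcal{O}(\rho^{\gamma-1})$. There is no serious obstacle here; the only point requiring minor care is observing that $\gamma > 1$ makes $n^{\gamma-1} \to 0$ (so the denominators in Proposition \ref{AProp_drhoPetc} do not degenerate), and that the uniform bound on $S$ keeps $\exp(S/C_v)$ bounded away from $0$ and $\infty$ so it does not interfere with the asymptotic orders.
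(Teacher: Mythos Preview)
Your proposal is correct and is exactly the argument the paper intends: the paper's proof consists solely of the remark that the proposition follows from Propositions~\ref{AProp_nandrho} and~\ref{AProp_drhoPetc}, and your write-up is precisely the expansion of that remark, substituting $n\sim\mathcal{O}(\rho)$ into \eqref{PnS} and the formulas \eqref{drhoP}.
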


\section{A property of continuous functions}
We will give one property of continuous functions in this subsection  that will be used in our proof. 
\begin{proposition}\label{B_SplitContinuousFunctions}
Let $f(x)$ be a continuous function on $x \in [a,b]$, where $a$ and $b$ are two fixed numbers. Then we can split the interval $[a,b]$ into countably many sub-intervals
\begin{equation}
    \label{interval}
    [a,b] = \bigcup_{i = 0}^{\infty} [x_i, x_{i+1}],
\end{equation}
where $x_0 = a$ and $x_\infty = b$, $(x_i,x_{i+1})$ and $(x_j,x_{j+1})$ are disjoint if $i \neq j$, such that on each of the $(x_i,x_{i+1})$ we have either $f(x) > 0$, $f(x) < 0$ or $f(x) = 0$. 
\end{proposition}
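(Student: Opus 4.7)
The core tool is the Intermediate Value Theorem, combined with the structure theorem for open subsets of $\mathbb{R}$. The first step is to analyze the zero set
\[ Z \coloneqq \{x \in [a,b] : f(x) = 0\}, \]
which is closed in $[a,b]$ by continuity of $f$. Its complement $U = [a,b] \setminus Z$ is therefore relatively open, and so by the classification of open subsets of $\mathbb{R}$ it decomposes uniquely as a countable disjoint union of (relatively) open intervals
\[ U = \bigsqcup_{n} (\alpha_n,\beta_n). \]

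On each connected component $(\alpha_n,\beta_n)$, the function $f$ is continuous and nowhere vanishing; therefore, by the IVT, $f$ cannot change sign on $(\alpha_n,\beta_n)$, i.e.\ either $f > 0$ throughout, or $f < 0$ throughout. On the complementary set $Z$, one has $f \equiv 0$ by definition. This already produces a countable decomposition of $[a,b]$ into pieces on each of which $f$ has constant sign, which is the essential content of the proposition.

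To repackage these pieces into the form $[a,b] = \bigcup_{i \geq 0}[x_i,x_{i+1}]$ with $x_0 = a$ and $x_i \nearrow b$, I would enumerate the components $(\alpha_n,\beta_n)$ from left to right (which is possible because only countably many of them have length exceeding any fixed $\varepsilon > 0$), take successive endpoints $\alpha_n,\beta_n$ as the partition points, and insert intermediate closed intervals sitting inside $Z$ as additional ``$f=0$'' pieces to fill in the gaps. The only delicate point is that the interiors of adjacent pieces are disjoint and their closures collectively cover $[a,b]$; this follows from the facts that $\partial(\alpha_n,\beta_n) \subset Z$ and that $Z \setminus \bigcup_n \{\alpha_n,\beta_n\}$ is itself a closed subset of $[a,b]$ on which $f \equiv 0$.

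The main potential obstacle is the case in which $Z$ is topologically complicated (for instance, a Cantor-like closed subset of $[a,b]$ with empty interior and no isolated points), since then a genuine strictly increasing left-to-right enumeration $\{x_i\}$ with $x_\infty = b$ may be awkward to produce literally. One bypasses this either by a careful telescoping argument (absorbing the residual portion of $Z$ between consecutive $(\alpha_n,\beta_n)$'s into a single ``$f=0$'' interval at each stage) or, more pragmatically for the application in the proof of Lemma~4.1, by observing that only the countability of the sign-constant family of intervals is ever used: the decomposition $\int_0^t |\partial_\sigma S(\sigma, x^3(\sigma))|\, \text{d}\sigma = \sum_n |S(\beta_n,x^3(\beta_n)) - S(\alpha_n, x^3(\alpha_n))|$ is insensitive to how one treats the measure-zero set $Z$ itself.
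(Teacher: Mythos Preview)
Your proposal is correct and follows essentially the same strategy as the paper: both proofs decompose $[a,b]$ into the open set where $f\neq 0$ (written as a countable disjoint union of open intervals via the structure theorem for open subsets of $\mathbb{R}$, on each of which $f$ has constant sign by the IVT) together with the closed zero set, which is in turn handled by decomposing its interior/complement in the same way. Your presentation is in fact somewhat cleaner than the paper's---you go directly to $Z=\{f=0\}$ and its open complement, whereas the paper first uses the rationals to enumerate maximal intervals where $f\neq 0$ and only then passes to the complement---but the underlying idea is identical, and your remark that the application in Lemma~4.1 only uses countability of the sign-constant pieces (and is insensitive to the precise ordering or to how one treats a Cantor-like $Z$) is an apt observation that the paper leaves implicit.
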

\begin{proof} It is obvious that we only need to consider the case  that the sign of $f(x)$ will change in the whole interval $[a,b]$.

 First, for  any point $x_0\in (a,b)$ such that $f(x_0) \neq 0$,
\begin{enumerate}

\item   if $x_0=a$,  there is an interval $[a,x_b]$ ($a<x_b< b$) such that $f(x) \neq 0$ for $x\in [a,x_b)$, and $f(x_b)=0$;\\

\item  if $x_0=b$,  there is an interval $[x_a,b]$ ($a< x_a<b$) such that $f(x) \neq 0$ for $x\in (x_a,b]$, and $f(x_a)=0$;\\

\item if $x_0\in [a,b]$, one has three cases:

 \begin{enumerate}

\item   there is an interval $x_0\in [a,x_b]$ ($a<x_b< b$) such that $f(x) \neq 0$ for $x\in (a,x_b)$;\\
\item    there is an interval $x_0\in [x_a,b]$ ($a< x_a<b$) such that $f(x) \neq 0$ for $x\in (x_a,b)$;\\
\item    there is an interval $x_0\in [x_a,x_b]$ ($a< x_a<x_b< b$) such that $f(x) \neq 0$ for $x\in (x_a,x_b)$, and $f(x_a) =f(x_b)=0$,
\end{enumerate}

\end{enumerate}
where one has used the continuity of  the function $f$.
 
Second,  consider the set of all rational numbers $\textbf{Q}\cap [a,b]=\{q_i\}_{i=0}^{\infty}$. We denote $(y_i,y_{i+1})$ the interval satisfying:
 \begin{enumerate}

\item   there at least exists one real number   $q_i \in (y_i,y_{i+1})$ for some $i \in \mathbb{N}$;
\item    $f(x) \neq 0$ for $x\in (y_i,y_{i+1})$.
\end{enumerate}
Because  $\textbf{Q}\cap [a,b]$ is  dense in $[a,b]$, such kind of intervals indeed exists. Denote
\begin{equation*}
    O \coloneqq (a,b) \setminus \Big(\bigcup_{i = 0}^{\infty} [y_i, y_{i+1}] \Big).
\end{equation*}
Note $O$ is an open set as it is a open set minus a closed set, and $f \equiv 0$ on $O$ by construction. As $O$ is open, one can express it as countably many disjoint open intervals (see \cite{realanalysis}, section 1.4, Proposition 9), hence one has 
\begin{equation*}
    O = \bigcup_{i = 0}^{\infty} (z_i, z_{i+1})
\end{equation*}
Now note 
\begin{equation*}
    [a,b] = \Big(\bigcup_{i = 0}^{\infty} [y_i, y_{i+1}] \Big) \bigcup \overline{O} = 
    \Big(\bigcup_{i = 0}^{\infty} [y_i, y_{i+1}] \Big) \bigcup \Big(\bigcup_{j = 0}^{\infty} [z_j, z_{j+1}] \Big)
\end{equation*}
After some relabelling we will get \eqref{interval}, and all the properties stated in the proposition are satisfied by construction.
\end{proof}

\section{The explicit derivation of weight functions:  $h_1$, $h_2$, $h$ and $g$}\label{apphg}


This subsection is divided into four steps, where we will show that the functions  $(h_1,h_2)$ given in  \eqref{fgmm} satisfy \eqref{h1h2def}, and $(h,g)$ given in  \eqref{tidleAlphaBeta} satisfy \eqref{dzhAnddwg}.

Step 1:  $h_1$ satisfies \eqref{h1h2def}. First, according to  the formulas of $(\lambda_1,\lambda_2)$ in \eqref{h1h2def}, one has 
\begin{equation*}
    \begin{split}
        \partial_z \lambda_1 = \frac{2c \partial_z (e^{H_{1}})}{(e^{H_{1}} + 1)^2}, \quad 
        \lambda_1 - \lambda_2 =&  -2c \left( \frac{e^{H_{2}} - e^{H_{1}}}{(e^{H_{2}}+1)(e^{H_{1}}+1)} \right),
    \end{split}
\end{equation*}
which yields that 
\begin{equation}
    \label{C1}
    \begin{split}
        \frac{\partial_z \lambda_1}{\lambda_1 - \lambda_2 } = &
        - \frac{\partial_z (e^{H_{1}}) (e^{H_{2}}+1)}{(e^{H_{1}}+1)(e^{H_{2}} - e^{H_{1}})} 
        =
        - \left( \frac{1}{e^{H_{1}}+1} + \frac{1}{e^{H_{2}} - e^{H_{1}}} \right) \partial_z (e^{H_{1}})\\
        =& - \partial_z \ln (e^{H_{1}}+1) - \frac{\partial_z (e^{H_{1}})}{e^{H_{2}} - e^{H_{1}}}.
    \end{split}
\end{equation}

Second, it follows from   \eqref{fg}, that  $H_{2} + H_{1} = \frac{2(w+z)}{c}$. Then 
\begin{equation*}
    \begin{split}
    \partial_z \ln (e^{H_{2}+H_{1}} - e^{2H_{1}}) &=
    \frac{1}{e^{H_{2}+H_{1}} - e^{2H_{1}}} \left( \frac{2}{c} e^{\frac{2(w+z)}{c}} - 2 e^{H_{1}} \partial_z (e^{H_{1}}) \right)\\ &= 
    \frac{1}{e^{H_{2}} - e^{H_{1}}} \left( \frac{2}{c} e^{H_{2}} - 2 \partial_z (e^{H_{1}}) \right),
    \end{split}
\end{equation*}
which implies that 
\begin{equation}
    \label{C2}
    \begin{split}
        \frac{\partial_z (e^{H_{1}})}{e^{H_{2}} - e^{H_{1}}} &= 
        \frac{e^{H_{2}}}{c(e^{H_{2}} - e^{H_{1}})} - \frac{1}{2} \partial_z \ln (e^{H_{2}+H_{1}} - e^{2H_{1}}) \\
        &= \frac{1}{c(1 - e^{H_{1}-H_{2}})} - \frac{1}{2} \partial_z \ln (e^{H_{2}+H_{1}} - e^{2H_{1}}).
    \end{split}
\end{equation}
According to  \eqref{fg}, one can obtain  that 
\begin{equation*}
    1 - e^{H_{1}-H_{2}} = 1 - \left( \frac{1-\sqrt{\gamma}\hspace{1mm} \text{Tan}(Y)}{1+\sqrt{\gamma}\hspace{1mm} \text{Tan}(Y)} \right)^2 = \frac{4\sqrt{\gamma}\hspace{1mm} \text{Tan}(Y)}{(1+\sqrt{\gamma}\hspace{1mm} \text{Tan}(Y))^2},
\end{equation*}
which, along with  \eqref{C2}, provides that 
\begin{equation}
    \label{C3}
    \frac{\partial_z (e^{H_{1}})}{e^{H_{2}} - e^{H_{1}}} = \frac{(1+\sqrt{\gamma}\hspace{1mm} \text{Tan}(Y))^2}{4c\sqrt{\gamma}\hspace{1mm} \text{Tan}(Y)}- \frac{1}{2} \partial_z \ln (e^{H_{2}+H_{1}} - e^{2H_{1}}).
\end{equation}
Then it follows from   \eqref{C1} and \eqref{C3} that 
\begin{equation}
    \label{C4}
    \frac{\partial_z \lambda_1}{\lambda_1 - \lambda_2 } = - \partial_z \ln (e^{H_{1}}+1) + \frac{1}{2} \partial_z \ln (e^{H_{2}+H_{1}} - e^{2H_{1}}) - \frac{(1+\sqrt{\gamma}\hspace{1mm} \text{Tan}(Y))^2}{4c\sqrt{\gamma}\hspace{1mm} \text{Tan}(Y)}.
\end{equation}

Moreover, according to  \eqref{fg} and the fact $\frac{dz}{dY} = \frac{4c\sqrt{\gamma}}{\gamma -1}$, one has 
\begin{equation*}
    \begin{split}
        \partial_z \ln(e^{H_1} + 1) &= \partial_z \ln \left(e^{\frac{w+z}{c}}\left(\frac{1-\sqrt{\gamma}\hspace{1mm} \text{Tan}(Y)}{1+ \sqrt{\gamma}\hspace{1mm} \text{Tan}(Y)} \right) + 1 \right) \\
        &= \partial_z \ln \left( \frac{(1+e^{\frac{w+z}{c}}) - \sqrt{\gamma} (-1 + e^{\frac{w+z}{c}})\text{Tan}(Y)}{1 + \sqrt{\gamma}\hspace{1mm} \text{Tan}(Y)} \right) \\
        &= \partial_z \ln \left( (1+e^{\frac{w+z}{c}}) \text{Cos}(Y) - \sqrt{\gamma} (-1 + e^{\frac{w+z}{c}}) \text{Sin}(Y) \right) \\
        & \quad - \partial_z\ln(\text{Cos}(Y)) - \partial_z \ln(1 + \sqrt{\gamma}\hspace{1mm} \text{Tan}(Y)),\\
        \frac{1}{2}\partial_z \ln(e^{H_1+H_2} - e^{2H_1}) &= 
        \frac{1}{2} \partial_z \ln \left(e^{2(\frac{w+z}{c})} - e^{2(\frac{w+z}{c})} \left(\frac{1-\sqrt{\gamma}\hspace{1mm} \text{Tan}(Y)}{1+ \sqrt{\gamma}\hspace{1mm} \text{Tan}(Y)} \right)^2 \right) \\
        &= \frac{1}{2} \partial_z \left( \ln (e^{2(\frac{w+z}{c})}) + \ln\left(\frac{4\sqrt{\gamma}\hspace{1mm} \text{Tan}(Y)}{ (1+ \sqrt{\gamma}\hspace{1mm} \text{Tan}(Y))^2}\right) \right) \\
        &= \frac{1}{c} + \frac{1}{2} \partial_z \ln (4\sqrt{\gamma}\hspace{1mm} \text{Tan}(Y)) - \partial_z \ln (1+ \sqrt{\gamma}\hspace{1mm} \text{Tan}(Y))\\
        &= \frac{1}{c} + \frac{1}{2} \partial_z \ln (\text{Tan}(Y)) - \partial_z \ln (1+ \sqrt{\gamma}\hspace{1mm} \text{Tan}(Y))\\
        &= \frac{1}{c} + \frac{1}{2} \partial_z \ln (\text{Sin}(Y)) - \frac{1}{2} \partial_z \ln (\text{Cos}(Y)) - \partial_z \ln (1+ \sqrt{\gamma}\hspace{1mm} \text{Tan}(Y)),\\
        \frac{(1+\sqrt{\gamma}\hspace{1mm} \text{Tan}(Y))^2}{4c\sqrt{\gamma}\hspace{1mm} \text{Tan}(Y)} &= 
        \frac{1}{4c\sqrt{\gamma}\hspace{1mm} \text{Tan}(Y)} + \frac{1}{2c} + \frac{\sqrt{\gamma}\hspace{1mm} \text{Tan}(Y)}{4c} \\
        &= \frac{1}{4c\sqrt{\gamma}}\partial_Y \ln(\text{Sin}(Y)) + \frac{1}{2c} - \frac{\sqrt{\gamma}}{4c} \partial_Y \ln(\text{Cos}(Y))\\
        &= \frac{1}{\gamma-1} \partial_z \ln(\text{Sin}(Y)) + \frac{1}{2c} - \frac{\gamma}{\gamma -1} \partial_z \ln(\text{Cos}(Y)),
        \end{split}
\end{equation*}
which, along with  \eqref{C4}, yields  that 
\begin{equation*}
    \begin{split}
           \frac{\partial_z \lambda_1}{\lambda_1 - \lambda_2 }  = &- \partial_z \ln \left( (1+e^{\frac{w+z}{c}}) \text{Cos}(Y) - \sqrt{\gamma} (-1 + e^{\frac{w+z}{c}}) \text{Sin}(Y) \right) \\ & + \frac{\gamma -3}{2(\gamma-1)} \partial_z \ln(\text{Sin}(Y)) +  \frac{3\gamma-1}{2(\gamma -1)} \partial_z \ln(\text{Cos}(Y)) + \frac{1}{2c}.
    \end{split}
\end{equation*}

From this one can easily see  that the $h_1$ given by \eqref{fgmm} indeed satisfies \eqref{h1h2def}.

Step 2:  $h_2$ satisfies \eqref{h1h2def}. First, via the similar argument used in Step 1, one can verify that
\begin{equation}
    \label{C5}
    \frac{\partial_w \lambda_2}{\lambda_1 - \lambda_2 } = - \partial_w \ln (e^{H_{2}}+1) + \frac{1}{2} \partial_w \ln (e^{2H_{2}} - e^{H_{1} + H_{2}}) + \frac{(1-\sqrt{\gamma}\hspace{1mm} \text{Tan}(Y))^2}{4c\sqrt{\gamma}\hspace{1mm} \text{Tan}(Y)}.
\end{equation}
Second, it follows from   \eqref{fg}, the fact $\frac{dw}{dY} = \frac{4c\sqrt{\gamma}}{1-\gamma }$ and direct calculations  that the function  $h_2$ given in  \eqref{fgmm} satisfies \eqref{h1h2def}.
\end{appendices}

Step 3:  $h$ satisfies \eqref{dzhAnddwg}. According to \eqref{HandG} and \eqref{lambdas}, one can verify that
\begin{equation}
    \frac{\partial_z \lambda_3}{\lambda_3 - \lambda_2 } = - \partial_z \ln (e^{H}+1) + \frac{1}{2} \partial_z \ln (e^{H+G} - e^{2H}) - \frac{(c+\sqrt{\Lambda})}{4c^2\sqrt{\Lambda}}.
\end{equation}
It is easy to see  that the $h$ defined in \eqref{tidleAlphaBeta} indeed satisfies \eqref{dzhAnddwg}.

Step 4:  $g$ satisfies \eqref{dzhAnddwg}. According to \eqref{HandG} and \eqref{lambdas}, one can verify that
\begin{equation}
    \frac{\partial_w \lambda_2}{\lambda_3 - \lambda_2 } = - \partial_w \ln (e^{G}+1) + \frac{1}{2} \partial_w \ln (e^{2G} - e^{G+H}) + \frac{(c-\sqrt{\Lambda})}{4c^2\sqrt{\Lambda}}.
\end{equation}
It is easy to see  that the $g$ defined in \eqref{tidleAlphaBeta} indeed satisfies \eqref{dzhAnddwg}.

\bigskip

\noindent{\bf Acknowledgement:}
The research  was supported in part by the Royal Society-- Newton International Fellowships NF170015, Newton International Fellowships Alumni AL/201021, and 
the UK Engineering and Physical Sciences Research Council Award
EP/L015811/1.

\bigskip

\noindent{\bf Conflict of Interest:} The authors declare  that they have no conflict of
interest.  The authors also  declare that this manuscript has not been previously  published, and will not be submitted elsewhere before your decision.


\begin{thebibliography}{99}



\bibitem{nikoszhu}
N. Athanasiou, and S. Zhu, Formation of singularities for the Relativistic Euler equations. \textit{J. Differential Equations} \textbf{284} (2021), 284–317.


\bibitem{frid1}
D. Calvo, RM. Colombo, and H. Frid, $L^1$ stability of spatially periodic solutions in relativistic gas dynamics. 
\textit{Commun. Math. Phys.}  \textbf{284} (2008), 509-535.



\bibitem{chen2019formation}
G. Chen, G-Q. Chen, and S. Zhu, Formation of singularities and existence of global continuous solutions for the compressible Euler equations. submitted, arXiv:1905.07758, 2019.

\bibitem{zhu2017formation}
G. Chen, R. Pan, and S. Zhu, Singularity formation for the compressible Euler equations. \textit{SIAM J. Math. Anal.} \textbf{49} (2017), 2591-2614.


\bibitem{zhuploylower}
G. Chen, R. Pan, and S. Zhu, A polygonal scheme and the lower bound on density for the isentropic gas dynamics. \textit{Discrete Contin. Dyn. Syst. (A)} \textbf{39} (2019), 42-59.






\bibitem{Chen2013shock}
G. Chen, R.Young, and  Q. Zhang, Shock formation in the compressible Euler equations and related systems.
\textit{J. Hyperbolic Differ. Equ.} \textbf{10}  (2013), 149-172.

\bibitem{Chen}		
J. Chen, Conservation laws for the relativistic $p$-system. \textit{Commun. Part. Diff. Equa.} \textbf{20}  (1995), 1605-1646.  

\bibitem{jingchen1997}
J. Chen, Conservation laws for relativistic fluid dynamics. \textit{Arch. Ration. Mech. Anal.} \textbf{139} (1997), 377-398.

\bibitem{Christodoulou}
				D. Christodoulou,
				\textit{The formation of shocks in 3-dimensional fluids},
				EMS Monographs in Mathematics, European Mathematical Society (EMS),  Zurich, 2007. 	
				
				
				
				
				
\bibitem{ChristodoulouMiao}
				D. Christodoulou, and S. Miao,
				\textit{Compressible Flow and Euler's Equations},
				{Surv.  Mod. Math.} \textbf{9} (2014), {International Press, Somerville, MA; Higher Education Press, Beijing}.


\bibitem{Daformos} C. Dafermos, \textit{Hyperbolic Conservation Laws in Continuum Physics}, 3rd ed., Springer-Verlag, New York, 2010.

\bibitem{evans} C. Evans, \textit{ Partial differential equations}, American Mathematical Society, Providence,  \textbf{19}, 2010.



\bibitem{frid2}
H. Frid, and M. Perepelitsa, Spatially periodic solutions in relativistic isentropic gas dynamics. 
\textit{Commun. Math. Phys.}  \textbf{250} (2004), 335-370.


\bibitem{geng2011special}
Y. Geng, and Y. Li. Special relativistic effects revealed in the Riemann problem for three-dimensional relativistic Euler equations. \textit{Z. Angew. Math. Phys.} \textbf{62} (2011), 281-304.


\bibitem{geng2009}
Y. Geng, and Y. Li, Non-relativistic global limits of entropy solutions to the extremely relativistic Euler equations. \textit{Z. Angew. Math.  Phys.} \textbf{61} (2010), 201-220.


			
\bibitem{GTZ99} Y.  Guo,  A. Shadi Tahvildar-Zadeh, Formation of singularities in relativistic fluid dynamics and in spherically symmetric plasma dynamics, Nonlinear partial differential equations (Evanston, IL, 1998),  \textit{Contemp. Math.} \textbf{ 238}, Amer. Math. Soc., Providence, RI, 1999, pp. 151-161.	





\bibitem{Hsu} C.   Hsu, S.  Lin, and T. Markino, On the  relativistic Euler equations.
				\textit{Methods Appl. Anal.} \textbf{8} (2001), 159-208.

\bibitem{john1974formation}
F. John, Formation of singularities in one-dimensional nonlinear wave propagation.  \textit{Comm.
Pure Appl. Math.} \textbf{27} (1974), 377-405.

\bibitem{kato1975cauchy}
T. Kato, The Cauchy problem for quasi-linear symmetric hyperbolic systems. \textit{Arch. Ration. Mech. Anal.} \textbf{58} (1975), 181-205. 

\bibitem{lax1964development}
P. Lax, Development of singularities of solutions of nonlinear hyperbolic partial differential equations. \textit{J. Math. Phys.} \textbf{5} (1964), 611-613.


\bibitem{Lidaqian}T. Li, \textit{Global Classical Solutions for Quasilinear Hyperbolic Systems}, Wiley, New York, 1994.




\bibitem{ta1985boundary}
T. Li, and W. Yu, \textit{Boundary Value Problems for Quasilinear Hyperbolic Systems}. Duke University, 1985.


\bibitem{ta1994weak}
T. Li, Y. Zhou, and D. Kong, Weak linear degeneracy and global classical solutions for general quasilinear hyperbolic systems. \textit{Commun. Partial. Differ. Equ.} \textbf{19} (1994), 1263-1317.



\bibitem{lizhoukong} T. Li, Y. Zhou, and D. Kong, Global classical
  solutions for general quasilinear hyperbolic systems with decay
  initial data. \textit{Nonlinear Analysis, Theory, Methods $\&$
    Applications}, {\bf 28} (1997), 1299-1332.






\bibitem{lidaqianBook}
 T. Li, and T. Qin, \textit{Physics and Partial Differential Equations}, Siam: Philadelphia, Higher
Education Press: Beijing, 2014.

 \bibitem{lin2} L. Lin, On the vacuum state for the equations of isentropic gas dynamics.
\textit{J. Math. Anal. Appl.}
{\bf 121} (1987), 406-425.



\bibitem{liu1979development}
T. Liu, Development of singularities in the nonlinear waves for quasi-linear hyperbolic partial differential equations. \textit{J. Differential Equations} \textbf{33} (1979), 92-111.

\bibitem{Todd0} T.  A. Oliynyk,  Lagrange coordinates for the Einstein-Euler equations. \textit{Phys. Rev. D} \textbf{85} (2012), 1-13.
				
\bibitem{Todd1} T.  A. Oliynyk,  On the existence of solutions to the relativistic Euler equations in 2 dimensions with a vacuum boundary. \textit{Class. Quantum Grav.} \textbf{29} (2012), 1-28.				
				
								
			


\bibitem{PanSmoller}
R. Pan, and J. Smoller,
Blowup of smooth solutions for Relativistic Euler equations.
\textit{Commun. Math. Phys.} \textbf{262} (2006), 729-755.

	\bibitem{Rammaha}
				M. A. Rammaha,
				Formation of singularities in compressible fluids in two space dimensions.
				\textit{Proc. Amer. Math. Soc.} \textbf{107}
				(1989),
				705-714.
	\bibitem{Rendall}
				A. Rendall, The initial value problem for self-gravitating fluid bodies. 
				\textit{Mathematical Physics X}. Leipzig 1991, 470-474. Springer, Berlin, 1992.


\bibitem{realanalysis}
H. Royden, and P. Fitzpatrick, \textit{Real Analysis}. New York: Macmillan, 1988.



\bibitem{RuanZhu}
L. Ruan, and C. Zhu, Existence of global smooth solution to the relativistic Euler equations.
\textit{Nonlin. Anal.} \textbf{60} (2005), 993-1001.


		
		
				

				\bibitem{Sideris}
				T. C. Sideris,
				Formation of singularities in three-dimensional compressible fluids.
				\textit{Commun. Math. Phys.} \textbf{101} (1985),
				475-487.






\bibitem{TempleSmoller}
B. Temple, and J. Smoller,
Global solutions of the relativistic Euler equations.
\textit{Commun. Math. Phys.}  \textbf{156} (1993), 67-99.	


\bibitem{relativisticShockTube}
K. Thompson, The special relativistic shock tube. \textit{J. Fluid Mech.} \textbf{171} (1986), 365-375.
		


				

\end{thebibliography}
\end{document}